\documentclass[10pt]{article}

\usepackage{natbib}                     %
\usepackage{amsmath, amssymb, amsthm}   %
\usepackage{graphicx}                   %
\usepackage{subcaption}                 %
\usepackage{fullpage}                   %
\usepackage{hyperref}                   %
\usepackage{enumitem}                   %
\usepackage{bm}                         %
\usepackage{authblk}                    %

\usepackage{algorithm}
\usepackage{algorithmic}
\usepackage{booktabs}
\usepackage{multirow}
\usepackage{xcolor}                     %
\definecolor{darkblue}{rgb}{0.0, 0.0, 0.5}  %

\hypersetup{
    colorlinks=true,
    linkcolor=darkblue,
    anchorcolor=darkblue,
    citecolor=darkblue,
    filecolor=darkblue,
    menucolor=darkblue,
    urlcolor=darkblue
}

\usepackage[nameinlink,noabbrev]{cleveref}
\numberwithin{equation}{section}

\usepackage{caption}
\newtheorem{theorem}{Theorem}
\newtheorem{lemma}{Lemma}

\newtheorem{corollary}{Corollary}
\theoremstyle{definition}
\newtheorem{definition}{Definition}

\newtheorem{remark}{Remark}
\newtheorem{assumption}{Assumption}

\newcommand{\rbb}{\mathbb{R}}  %
\newcommand{\ebb}{\mathbb{E}}  %
\newcommand{\dcal}{\mathcal{D}}
\newcommand{\gcal}{\mathcal{G}}
\DeclareMathOperator*{\argmin}{arg min}

\DeclareMathOperator{\dom}{dom}
\def\p{\operatorname{prox}}

\title{Universal Adaptive Proximal Gradient Methods \\ via Gradient Mapping Accumulation}
\author{Zimeng Wang}
\author{Alp Yurtsever}
\affil{Department of Mathematics and Mathematical Statistics, Umeå University, Sweden
\\
\texttt{\{zimeng.wang, alp.yurtsever\}@umu.se}
}
\date{}

\begin{document}

\maketitle

\begin{abstract}
We propose an adaptive proximal gradient method for minimizing the sum of two functions, where one is a simple convex function, and the other belongs to one of the three classes: nonconvex smooth, convex nonsmooth, or convex smooth. The key feature of the method is an adaptive step size that accumulates historical gradient mapping norms in the denominator. Without any modification or knowledge of problem parameters, the method converges across all three problem classes under mild bounded-iterates and bounded-variance assumptions, with rates matching those of the proximal gradient method up to logarithmic factors, in both deterministic and stochastic settings. For the convex setting, we further propose an accelerated variant. It retains a similar near-optimal convergence rate for the nonsmooth case and achieves an improved rate of order $\widetilde{O}\big(1/t^2 + \sigma/\sqrt{t}\big)$ for the smooth case, which is optimal up to logarithmic factors. Notably, we develop new techniques for controlling the effect of stochastic noise, which are applicable across all three problem classes in the stochastic setting and enable simplified analysis.
\end{abstract}

\section{Introduction}

We study the following nonsmooth composite optimization problem:
\begin{equation}\label{main-prob}
\min_{x\in\rbb^d} F(x):=f(x)+h(x),
\end{equation}
where $f:\rbb^d\rightarrow \rbb$ is either nonconvex smooth or convex (possibly nonsmooth), and $h:\rbb^d\rightarrow \rbb\cup \{+\infty\}$ is convex, proper, and lower semicontinuous, but possibly nonsmooth. Problems of this form are ubiquitous in machine learning, signal processing, and statistics. Representative examples include LASSO \citep{tibshirani1996regression}, support vector machine \citep{cortes1995support}, regularized logistic regression \citep{berkson1944application}, and trace-norm matrix completion \citep{candes2012exact}. In the stochastic setting, we are particularly interested in the case $f(x)=\ebb_{\xi\in\dcal}[\ell(x;\xi)]$, where $\ell(\cdot;\xi):\rbb^d\rightarrow \rbb$ is the loss function associated with the sample $\xi$ drawn from the underlying data space $\dcal$.

A widely used first-order algorithm for solving~\eqref{main-prob} is the proximal gradient (PG) method, also known as the forward-backward splitting algorithm \citep{lions1979splitting,tseng2000modified}. With the notation of proximal operator $\p_h(\cdot)$ (see \eqref{def:prox-h} for its definition), PG iteratively updates:
\begin{equation}\label{Alg:prox-grad}
\tag{PG}
x_{k+1}=\p_{\eta_k h}(x_k - \eta_k \nabla f(x_k))=\argmin_{x\in\rbb^d}\Big\{\langle \nabla f(x_k), x\rangle+\frac{1}{2 \eta_k}\|x-x_k\|^2+h(x)\Big\},
\end{equation}
where $\eta_k>0$ is a step size and $\nabla f(x_k)\in\rbb^d$ denotes the gradient of $f$ at $x_k$ (or a subgradient if $f$ is non-differentiable). 
When $f$ is $L$-smooth, PG with $\eta_k=1/L$ achieves $O(1/t)$ stationary convergence rate for nonconvex $f$ \citep{ghadimi2016mini}, and a rate of $O(1/t)$ on the objective gap for convex $f$; accelerated variants \citep{beck2009fast,nesterov1983method,nesterov2013gradient} improve the convex rate to the optimal $O(1/t^2)$. 
When $f$ is convex and $G$-Lipschitz, PG with $\eta_k=O(1/\sqrt{k})$ achieves the optimal $O(1/\sqrt{t})$ rate \citep{nesterov2003introductory}. Nevertheless, these results reveal a central drawback of PG: the step size must be tuned to the function class and relevant parameters (e.g., the smoothness constant $L$), which could be impractical since such information is often lacking in real applications.

A principled remedy is to use adaptive step sizes that automatically adjust based on information accumulated during the implementation, without requiring prior knowledge of the problem class or relevant parameters. AdaGrad \citep{duchi2011adaptive} pioneers this approach by setting $\eta_k = O\Big(1/\sqrt{\sum_{i=1}^{k}\|\nabla f(x_i)\|^2}\Big)$, and has given rise to a rich family of adaptive methods \citep{kingma2015adam,reddi2018convergence,tieleman2012rmsprop,zeiler2012adadelta}. The key mechanism is that the denominator implicitly tracks the cumulative gradient magnitude: for smooth objectives where $\|\nabla f(x_k)\|$ diminishes, the denominator grows slowly and the step size stays large; for nonsmooth objectives where $\|\nabla f(x_k)\|$ remains bounded away from zero, the step size decreases as $O(1/\sqrt{k})$. In the non-composite setting ($h=0$), AdaGrad-type step sizes have been rigorously analyzed for both convex \citep{ene2021adaptive,kavis2019unixgrad,levy2017online,levy2018online} and nonconvex objectives \citep{attia2023sgd,kavis2022high,wang2023convergence,ward2020adagrad}, and are shown to automatically match the convergence rates of gradient methods in each setting without prior knowledge of the problem class or smoothness constants.

While adaptive step sizes have been well-studied for non-composite problems, their extension to the composite setting~\eqref{main-prob} is nontrivial and remains largely unexplored, especially for nonconvex objectives. A fundamental difficulty is that gradient norms are no longer appropriate progress measures in the composite setting since the gradient $\nabla f(x^*)$ at a stationary point $x^*$ of \eqref{main-prob} is generally nonzero. Consequently, accumulating $\|\nabla f(x_k)\|^2$ in an AdaGrad manner would drive the step sizes to zero even for smooth objectives, which is inconsistent with the standard constant step size on the order of $1/L$. This motivates the following question:

\noindent\emph{Can we design a universal (in the sense of \citet{nesterov2015universal}) adaptive proximal gradient method for solving \eqref{main-prob} that automatically achieves convergence rates matching those of PG for $f$ being nonconvex smooth, convex smooth, or convex nonsmooth, without any prior knowledge of the problem-related parameters?}

\subsection{Our Contributions}
We answer the above challenge affirmatively by proposing and studying Algorithm~\ref{Alg:adaprox}. The key design insight is to replace the accumulated squared gradient norms as used in the AdaGrad denominator with accumulated squared norms of the \emph{gradient mapping} $\gcal_k$ (see \eqref{def:Gt} for its definition), which vanishes at stationary points of~\eqref{main-prob}. We show that Algorithm \ref{Alg:adaprox} is \emph{universal} in the sense that it simultaneously applies to three different problem classes without any modification or prior knowledge of relevant parameters (e.g., smoothness constant, diameter $D$, or noise level $\sigma$), as summarized below (see also Table~\ref{tab:results} for an overview).
\begin{itemize}[leftmargin=2em]
  \item For nonconvex and smooth $f$, we establish stationary convergence rate $O(1/t)$ for Algorithm~\ref{Alg:adaprox} in the deterministic setting. Additionally, in the stochastic setting with bounded variance $\sigma^2$ and batch sizes $\{b_k\}_{k\ge 1}$, Algorithm~\ref{Alg:adaprox} achieves the stationary convergence rate $\widetilde{O}\big(\frac{1}{t}+\frac{\sigma^2}{t} \sum_{k=1}^t \frac{1}{b_k}\big)$ with high probability, where $\widetilde{O}(\cdot)$ hides logarithmic factors. To the best of our knowledge, this seems to be the first convergence analysis of AdaGrad-type method in a nonconvex composite setting, and the established rates match the best known results of PG in nonconvex settings \citep{ghadimi2013stochastic,ghadimi2016mini} up to logarithmic factors.

  \item The same algorithm adapts to convex settings without requiring any modification. Specifically, when $f$ is Lipschitz continuous, the averaged iterate of Algorithm~\ref{Alg:adaprox} with unit batch size converges towards an optimal solution at a rate of $\widetilde{O}\big((1+\sigma)/\sqrt{t}\big)$. When $f$ is smooth, it achieves an improved rate $\widetilde{O}\big(\frac{1}{t}+\frac{\sigma}{\sqrt{t}}\big)$. The corresponding deterministic rates follow directly by setting $\sigma=0$.

  \item For convex $f$, we develop an accelerated variant (Algorithm~\ref{Alg:acc-adaprox}) that achieves better convergence rates when the diameter $D$ is known. In particular, Algorithm~\ref{Alg:acc-adaprox} preserves a similar $\widetilde{O}\big((1+\sigma)/\sqrt{t}\big)$ convergence rate as Algorithm~\ref{Alg:adaprox} for Lipschitz continuous $f$, and achieves an accelerated rate of order $\widetilde{O}\big(\frac{1}{t^2}+\frac{\sigma}{\sqrt{t}}\big)$ for smooth $f$. These rates are optimal in both regimes up to logarithmic factors \citep{lan2012optimal}.
\end{itemize}

A key technical challenge in establishing the results in stochastic settings is that the adaptive step sizes used in Algorithm \ref{Alg:adaprox} become random variables as opposed to predetermined ones, making it difficult to bound terms involving their products with other random variables (see Section~\ref{sec:nc-stoc} for details). 
We overcome this difficulty by carefully estimating cumulative inner products between the stochastic noise and iterative differences (see Lemma~\ref{lem:stoc-ip-sum}), which enables us to establish convergence with simplified analysis across different problem classes.
Another advantage of our analysis is that it removes the restrictive assumption that the stochastic gradients are uniformly bounded almost surely, which is frequently imposed in related literature (see, e.g., \citep{kavis2019unixgrad,kavis2022high,levy2017online,levy2018online,ward2020adagrad}).

\begin{table}[ht]
\centering
\small
\caption{Summary of convergence results for Algorithms~\ref{Alg:adaprox} and~\ref{Alg:acc-adaprox}. Note: in the ``Batch" column, ``Full'' means full gradient (deterministic case); $1$ means unit batch sizes. $\widetilde{O}(\cdot)$ hides constants and logarithmic factors. Suboptimality is measured by $\frac{1}{t}\sum_{k=1}^t\|\gcal_k\|^2$ for nonconvex $f$ (with high probability in stochastic cases); by $F(\bar{x}_t)-F^*$ for Alg.~\ref{Alg:adaprox} and $F(y_{t+1})-F^*$ for Alg.~\ref{Alg:acc-adaprox} for convex $f$ (in expectation in stochastic cases).}
\label{tab:results}
\renewcommand{\arraystretch}{1.35}
\setlength{\tabcolsep}{3pt}
\begin{tabular}{@{}l ccc p{1cm} p{3.2cm} p{3.0cm} l@{}}
\toprule
\multirow{2}{*}{\textbf{Algorithm}} & \multicolumn{3}{c}{\textbf{Problem Class $f$}} & \multirow{2}{*}{\textbf{Batch}} & \multirow{2}{*}{\textbf{Convergence}} & \multirow{2}{*}{\textbf{Assumptions}} & \multirow{2}{*}{\textbf{Ref.}} \\
\cmidrule(lr){2-4}
 & \textbf{Convex} & \textbf{Smooth} & \textbf{Lipschitz} & & & & \\
\midrule
\multirow{6}{*}{Alg.~\ref{Alg:adaprox}}
  & $\times$     & $\checkmark$ & $\times$     & Full  & $O(1/t)$
  & Asm.~\ref{ass:seq-bd}
  & Thm.~\ref{thm:nc-deter} \\
  & $\times$     & $\checkmark$ & $\times$     & $b_k$ & $\widetilde{O}\big(\frac{1}{t}+\frac{\sigma^2}{t} \sum_{k=1}^t \frac{1}{b_k}\big)$
  & Asm.~\ref{ass:seq-bd},~\ref{ass:bd-stoch}
  & Thm.~\ref{thm:nc-stoc} \\
  & $\checkmark$ & $\times$     & $\checkmark$ & Full  & $\widetilde{O}\!\big(1/\sqrt{t}\big)$
  & Asm.~\ref{ass:seq-bd}
  & Cor.~\ref{cor:cvx-ns-deter} \\
  & $\checkmark$ & $\times$     & $\checkmark$ & $1$   & $\widetilde{O}\!\big(\tfrac{1+\sigma}{\sqrt{t}}\big)$
  & Asm.~\ref{ass:seq-bd},~\ref{ass:bd-stoch}
  & Thm.~\ref{thm:stoch-convex-nonsmooth} \\
  & $\checkmark$ & $\checkmark$ & $\times$     & Full  & $O(1/t)$
  & Asm.~\ref{ass:seq-bd}
  & Cor.~\ref{coro:stoch-convex-smooth} \\
  & $\checkmark$ & $\checkmark$ & $\times$     & $1$   & $\widetilde{O}\!\big(\tfrac{1}{t}+\tfrac{\sigma}{\sqrt{t}}\big)$
  & Asm.~\ref{ass:seq-bd},~\ref{ass:bd-stoch}
  & Thm.~\ref{thm:stoch-convex-smooth} \\
\midrule
\multirow{4}{*}{Alg.~\ref{Alg:acc-adaprox}}
  & $\checkmark$ & $\times$     & $\checkmark$ & Full  & $\widetilde{O}\!\big(1/\sqrt{t}\big)$
  & Asm.~\ref{ass:z-bd}; $\eta{>}\tfrac{\sqrt{2}}{2}D$
  & Cor.~\ref{cor:lip-acc-deter} \\
  & $\checkmark$ & $\times$     & $\checkmark$ & $1$   & $\widetilde{O}\!\big(\tfrac{1+\sigma}{\sqrt{t}}\big)$
  & Asm.~\ref{ass:bd-stoch},~\ref{ass:z-bd}; $\eta{>}\tfrac{\sqrt{2}}{2}D$
  & Thm.~\ref{thm:cvx-lip-acc} \\
  & $\checkmark$ & $\checkmark$ & $\times$     & Full  & $O(1/t^2)$
  & Asm.~\ref{ass:z-bd}; $\eta{>}\tfrac{\sqrt{2}}{2}D$
  & Cor.~\ref{cor:smooth-acc-deter} \\
  & $\checkmark$ & $\checkmark$ & $\times$     & $1$   & $\widetilde{O}\!\big(\tfrac{1}{t^2}+\tfrac{\sigma}{\sqrt{t}}\big)$
  & Asm.~\ref{ass:bd-stoch},~\ref{ass:z-bd}; $\eta{>}\tfrac{\sqrt{2}}{2}D$
  & Thm.~\ref{thm:smooth-acc} \\
\bottomrule
\end{tabular}
\end{table}

The remainder of the paper is organized as follows. Section~\ref{sec:setup} introduces the problem setup, including notations, auxiliary lemmas, and the design of Algorithm~\ref{Alg:adaprox}. Sections~\ref{sec:nc} and~\ref{sec:cvx} analyze the convergence of Algorithm~\ref{Alg:adaprox} for nonconvex and convex $f$, respectively, in both deterministic and stochastic settings. Section~\ref{sec:acc} proposes and analyzes the accelerated variant Algorithm~\ref{Alg:acc-adaprox} for the convex setting. Section~\ref{sec:experiments} presents numerical experiments, and Section~\ref{sec:conclusion} concludes the paper.

\subsection{Related Work}

\noindent\textbf{Proximal Gradient Methods.}
The proximal gradient (PG) method in the form of \eqref{Alg:prox-grad} traces back to the splitting algorithms of \citet{lions1979splitting} and \citet{passty1979ergodic}, and is surveyed comprehensively in \citet{parikh2014proximal}. Depending on $h$, the proximal operator can admit a closed-form solution, yielding a variety of well-known special cases. For instance, when $h$ is the indicator function of a convex closed set, \eqref{Alg:prox-grad} reduces to the projected gradient method \citep{levitin1966constrained}; when $h$ is an $\ell_1$ regularizer, it becomes the iterative shrinkage-thresholding algorithm (ISTA), along with its accelerated variant FISTA \citep{beck2009fast}. For convex smooth $f$, the $O(1/t)$ rate is classical and can be accelerated to the optimal $O(1/t^2)$ via Nesterov-type momentum \citep{beck2009fast,nesterov1983method,nesterov2013gradient}. For convex nonsmooth $f$, PG achieves the optimal $O(1/\sqrt{t})$ rate \citep{nesterov2003introductory}. In the stochastic setting, \citet{lan2012optimal} establishes optimal rates for stochastic convex composite objectives. These results are later extended to nonconvex cases by \citet{ghadimi2016mini} and \citet{ghadimi2016accelerated}. Nesterov's universal gradient method \citep{nesterov2015universal} achieves near-optimal rates for both smooth and nonsmooth convex objectives without prior knowledge of the smoothness constant, providing an important precursor to adaptive approaches. Nevertheless, this method employs a line search at every iteration, which is unsuitable for handling noisy gradients in the stochastic setting.

\noindent\textbf{Adaptive Gradient Methods.}
AdaGrad \citep{duchi2011adaptive} introduced per-coordinate adaptive step sizes by accumulating squared gradient norms, with popular practical variants including Adam \citep{kingma2015adam} and AMSGrad \citep{reddi2018convergence}. In the non-composite setting ($h=0$), AdaGrad achieves a rate of $O(1/\sqrt{t})$ for convex nonsmooth objectives and $O(\log t / t)$ for convex smooth objectives \citep{duchi2011adaptive}. Building on this, \citet{ene2021adaptive,kavis2019unixgrad,levy2017online,levy2018online} develop universal accelerated methods that achieve near-optimal rates simultaneously for smooth and nonsmooth convex objectives. For nonconvex objectives, \citet{kavis2022high,ward2020adagrad} provide sharp high-probability convergence guarantees for AdaGrad under an additional bounded-gradient assumption. \citet{attia2023sgd,faw2022power,wang2023convergence} provide refined analyses under a relaxed affine-noise variance assumption and completely remove the bounded-gradient assumption.
Adapting these methods to the composite setting~\eqref{main-prob} is less studied and has mainly focused on the convex regime. While the original AdaGrad \citep{duchi2011adaptive} already handles the composite structure, it may converge slowly, as it still accumulates squared gradient norms in the denominator. This issue is later addressed by \citet{joulani2020simpler} and \citet{rodomanov2024universality}, which accumulate norms of gradient differences instead. Nevertheless, these works are still restricted to the convex regime and require knowledge of the diameter. For nonconvex composite problems, we are only aware of \citet{yun2021adaptive}, which proposes a general adaptive PG framework. However, they only provide rough convergence analysis for non-increasing step sizes, which requires prior knowledge of the smoothness constant, making the method not fully adaptive.
A separate line of adaptivity, pioneered by \citet{malitsky2020adaptive}, uses step sizes based on local gradient variation, achieving adaptivity to local smoothness without line search. This approach is later extended to the composite setting  \citep{latafat2024convergence,latafat2025adaptive,malitsky2024adaptive}.

\section{Problem Setup}
\label{sec:setup}
In this section, we summarize some notations and definitions, present auxiliary lemmas, and introduce the main algorithm.
\subsection{Notations and Definitions}
For a vector $v:=(v_1,\dots, v_d)^\top\in \rbb^d$, let $\|v\|$ denote its Euclidean norm, i.e., $\|v\|:=\big(\sum_{i=1}^d v_i^2\big)^{1/2}$. For a positive integer $n\in\mathbb{N}_+$, define $[n]:=\{1,\ldots,n\}$. For $a\in\rbb$, let $[a]_+:=\max\{0, a\}$ be its positive part. We also denote by $\log b$ the natural logarithm of $b>0$, i.e., the logarithm with base $e$.

Let ($\Omega, \mathcal{F}, \mathrm{P})$ be the underlying probability space. We denote by $\left\{x_k\right\}_{k \geq 0}\subset\rbb^d$ the sequence of iterates generated by a stochastic algorithm, driven by a sequence of independent noise variables $\left\{\xi_k\right\}_{k \geq 1}$. Let $\left\{\mathcal{F}_k\right\}_{k \geq 0}$ be the natural filtration of this process, i.e., $\mathcal{F}_k= \sigma\left(x_0, \xi_1, \xi_2, \ldots, \xi_k\right)$ represents the $\sigma$-algebra generated by $x_0$ and $\left\{\xi_i\right\}_{i=1}^k$. We denote the conditional expectation given $\mathcal{F}_{k-1}$ by $\ebb_k[\cdot]:=\ebb\left[\cdot \mid \mathcal{F}_{k-1}\right]$, which represents the expectation given all information available at step $k$.
Below we recall the definitions of convexity, Lipschitz continuity, and smoothness.
\begin{definition}[Convexity, Lipschitz continuity, and Smoothness]
Let the function $f: \rbb^d \rightarrow \rbb$ be continuously differentiable. Let $\mu\geq0, G>0$, and $L>0$.
\begin{enumerate}
  \item[1).] We say that $f$ is $\mu$-strongly convex if
  \[
  f(y) \ge f(x) + \langle\nabla f(x), y-x\rangle+\frac{\mu}{2}\|y-x\|^2,~\forall x, y\in\rbb^d.
  \]
  When the above inequality holds with $\mu=0$, we say that $f$ is convex.
  \item[2).] We say that $f$ is $G$-Lipschitz continuous if
  $
  |f(y)-f(x)|\leq G\|y-x\|,~\forall x, y\in\rbb^d.
  $
  \item[3).] We say that $f$ is $L$-smooth if its gradient is $L$-Lipschitz continuous, i.e.,
  $
  \|\nabla f(y)-\nabla f(x)\|\leq L\|y-x\|,~\forall x, y\in\rbb^d.
  $
\end{enumerate}
\end{definition}
A useful property of an $L$-smooth function $f$ is that
\begin{equation*}
    f(y) \leq f(x) + \langle\nabla f(x), y-x\rangle + \frac{L}{2} \|y-x\|^2,~\forall x,y \in \mathbb{R}^d.
\end{equation*}

For a proper, lower-semicontinuous, and convex function $h: \rbb^d \rightarrow \rbb\cup\{+\infty\}$, we denote by $\dom(h):=\{x\in\rbb^d: h(x)<+\infty\}$ its (effective) domain and by $\p_h(\cdot)$ its proximal operator defined as
\begin{equation}\label{def:prox-h}
	\p_{h}(x):=\underset{z\in \rbb^d}{\argmin}~\left\{h(z)+\frac{1}{2}\|z-x\|^2\right\},~\forall x\in \rbb^d.
\end{equation}
The proximal operator $\p_h(\cdot)$ shares the property of nonexpansiveness:
\begin{equation}\label{eq:prox-nonexpansive}
    \|\p_h(x)-\p_h(y)\| \leq \|x-y\|,~\forall x, y\in \rbb^d.
\end{equation}
Additionally, we denote by $\partial h(x)$  the set of all subgradients of $h$ at $x\in \rbb^d$, i.e.,
\begin{equation*}
    \partial h(x) := \big\{u \in \rbb^d: h(y) - h(x) - \langle u, y-x \rangle \geq 0, \forall y\in \rbb^d\big\}.
\end{equation*}

Throughout the paper, we always assume that the objective function $F$ in \eqref{main-prob} is bounded from below, and we denote $F^*:=\inf_{x\in\rbb^d} F(x)$. If $F$ is convex, we also assume that it attains a global minimizer, which we denote by $x^*\in\argmin_{x\in\rbb^d} F(x)$ and hence $F^*=F(x^*)$. Consequently, it is natural to measure the suboptimality of a point $x$ by the objective gap $F(x)-F^*$ in the convex setting. When $f$ is differentiable but nonconvex, the first-order optimality condition of \eqref{main-prob} implies that any stationary point $x^*\in\rbb^d$ should satisfy $\gcal_\eta(x^*)=0$ for any $\eta>0$, where $\gcal_\eta(\cdot)$ is known as the gradient mapping defined by
\begin{equation}\label{def:grad-mapping}
    \gcal_\eta(x):=\frac{1}{\eta}\big(x-\p_{\eta h}(x-\eta \nabla f(x))\big).
\end{equation}
Similar to existing works on nonconvex composite optimization (e.g., \citep{ghadimi2013stochastic,ghadimi2016mini}), we measure the suboptimality using the norm of the gradient mapping.

\subsection{Auxiliary Lemmas}
Next, we introduce the following three elementary lemmas, which solve inequalities in specific formats.
\begin{lemma}\label{lem:quadratic}
Let $a, b\geq0$. If $x^2\leq ax+b$, then $x^2\leq a^2+2b$.
\end{lemma}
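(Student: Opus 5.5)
The plan is to apply Young's inequality to the linear term $ax$ and absorb the resulting quadratic into the left-hand side. Since $a,b\geq 0$, we may use
\[
ax \leq \frac{a^2}{2} + \frac{x^2}{2},
\]
which holds for every real $x$ because it is equivalent to $(x-a)^2\geq 0$. Note that no sign assumption on $x$ is needed.

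Substituting this into the hypothesis $x^2\leq ax+b$ gives $x^2 \leq \frac{a^2}{2}+\frac{x^2}{2}+b$. Subtracting $\frac{x^2}{2}$ from both sides and multiplying by $2$ then yields exactly $x^2\leq a^2+2b$, which is the claim.

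As an alternative route, one could solve the quadratic inequality directly. This gives $x\leq \frac{a+\sqrt{a^2+4b}}{2}$, and the bound would follow by squaring and applying $(u+v)^2\leq 2u^2+2v^2$. That route requires handling the case $x<0$ separately, which the Young's-inequality argument avoids. No step is expected to pose a real obstacle; the only point requiring care is that Young's inequality holds for all real $x$.
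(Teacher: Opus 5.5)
Your proof is correct and complete. The step $ax\le \tfrac12(a^2+x^2)$ is just $(x-a)^2\ge 0$, and absorbing $\tfrac12 x^2$ into the left-hand side gives $x^2\le a^2+2b$ directly, with no case split on the sign of $x$. The paper states this lemma without proof, so there is no competing route to compare against.

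One remark is worth making. Your argument never uses $a,b\ge 0$, so it proves the statement for all real $a,b$. The opening phrase ``Since $a,b\geq 0$, we may use\ldots'' is therefore unnecessary. The extra generality is actually useful. In the proof of Lemma~\ref{lem:sqrt-log}, the paper applies Lemma~\ref{lem:quadratic} to $\sqrt{x}$ with linear coefficient $1$ and constant term $a\sqrt{\log(a^2+1)}+b$, where $b\in\rbb$ may be negative. That case is not covered by the lemma's stated hypotheses, but it is covered by your argument.
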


\begin{lemma}\label{lem:log}
Let $a, x>0$ and $b\in\rbb$. If $x \leq a \log x+b$, then $x \leq 2 a \log 2a - 2a+2 b.$
\end{lemma}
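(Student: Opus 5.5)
Since $\log$ is concave, I will replace it by its tangent line at a well-chosen point and then solve the resulting linear inequality in $x$. For any $y>0$, concavity gives
\[
\log x \le \log y + \frac{x}{y} - 1, \qquad x>0,
\]
which is the standard inequality $\log u \le u-1$ applied with $u=x/y$. The natural choice is $y=2a$, because then the coefficient of $x$ on the right-hand side of the hypothesis becomes $a\cdot\frac{1}{2a}=\frac12$ and can be absorbed into the left-hand side.

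With $y=2a$, the hypothesis $x\le a\log x+b$ yields
\[
x \le a\log(2a) + \frac{x}{2} - a + b .
\]
Moving $x/2$ to the left and multiplying by $2$ gives
\[
x \le 2a\log(2a) - 2a + 2b,
\]
which is exactly the claimed bound. The sign of $b$ plays no role, and we only need $a>0$ so that multiplying the tangent-line bound by $a$ preserves the inequality.

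I do not expect a real obstacle here. The only decision is the tangent point, and $y=2a$ is forced by wanting the coefficient of $x$ to be $\frac12$. Any other coefficient $c<1$ would also work and give a different constant, but the stated form corresponds to $c=\frac12$.
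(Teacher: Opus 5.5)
Your proposal is correct and matches the paper's proof: both apply $\log u \le u-1$ with $u = x/(2a)$, multiply by $a>0$, and absorb the resulting $x/2$ into the left-hand side. Nothing is missing.
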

\begin{proof}[Proof of Lemma \ref{lem:log}]
Applying the basic inequality $\log u \leq u-1$ with $u=x/(2a)$ gives 
\[
\log x \leq \frac{x}{2a}+\log 2a -1,
\]
which together with $x \leq a \log x+b$ implies that
\[
x \leq \frac{x}{2}+a\log 2a -a +b.
\]
We obtain the desired result after rearranging the terms.
\end{proof}

\begin{lemma}\label{lem:sqrt-log}
Let $a, x>0$ and $b\in\rbb$. If $x \leq a \sqrt{\log x}+b$, then $x \leq 2 a \sqrt{\log (a^2+1)}+2 b+1$.
\end{lemma}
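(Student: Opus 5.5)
The plan is a change of scale in the logarithm followed by Lemma~\ref{lem:quadratic} applied to $\sqrt{x}$. Since $\sqrt{\log x}$ only makes sense for $x\ge 1$, I will work throughout under the implicit assumption $x\ge1$. Set $c:=a^2+1$ and $A:=a\sqrt{\log c}+b$. Then the target bound reads
\[
x\le 2A+1 .
\]

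I would first record that $A\ge 0$ in every case, since both cases below need it.

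\emph{Case $x< c$.} Monotonicity of $\log$ gives $\sqrt{\log x}\le\sqrt{\log c}$. The hypothesis then yields $x\le A$. Because $x\ge 1$, this forces $A\ge1>0$, and hence $x\le A\le 2A+1$. This case is then done.

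\emph{Case $x\ge c$.} Split $\log x=\log c+\log(x/c)$ with both terms nonnegative, and use subadditivity of the square root:
\[
\sqrt{\log x}\le \sqrt{\log c}+\sqrt{\log(x/c)}.
\]
Next apply $\log u\le u-1\le u$ with $u=x/c$, as in the proof of Lemma~\ref{lem:log}, to get $\sqrt{\log(x/c)}\le\sqrt{x/c}$. The choice $c=a^2+1$ gives $a/\sqrt{c}\le 1$, so the hypothesis becomes
\[
x\le A+\frac{a}{\sqrt{c}}\sqrt{x}\le \sqrt{x}+A .
\]
To use Lemma~\ref{lem:quadratic} I need its constant term to be nonnegative. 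Since $x\ge1$ implies $\sqrt x\le x$, the display gives $A\ge x-\sqrt x\ge0$. Now apply Lemma~\ref{lem:quadratic} with variable $X=\sqrt{x}$, linear coefficient $1$, and constant $A\ge 0$. This gives $x=X^2\le 1+2A$, which is exactly $2a\sqrt{\log(a^2+1)}+2b+1$.

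The main delicate point is the sign of $b$. The hypothesis allows $b\in\rbb$, while Lemma~\ref{lem:quadratic} requires nonnegative coefficients. The step I therefore expect to need care is verifying $A\ge0$ in each case from $x\ge 1$, as done above, rather than the splitting itself.
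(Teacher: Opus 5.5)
Your proof is correct and follows essentially the paper's route. Both arguments rescale inside the logarithm, apply $\log u\le u-1$, use subadditivity of $\sqrt{\cdot}$ to reach $x\le\sqrt{x}+a\sqrt{\log(a^2+1)}+b$, and finish with Lemma~\ref{lem:quadratic} applied to $\sqrt{x}$. Two small differences: the paper bounds $\log x\le x/a^2+\log(a^2+1)$ directly, a sum of two positive terms valid for every $x>0$, so your case split on $x<c$ versus $x\ge c$ is unnecessary. Your explicit check that the constant term is nonnegative (using $x\ge 1$) verifies a hypothesis of Lemma~\ref{lem:quadratic} that the paper leaves implicit.
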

\begin{proof}[Proof of Lemma \ref{lem:sqrt-log}]
Plugging $u=x/a^2$ in $\log u \leq u-1$ gives 
\[
\log x \leq \frac{x}{a^2}+\log a^2-1 \leq \frac{x}{a^2}+\log (a^2+1),
\]
applying which on $x \leq a \sqrt{\log x}+b$ and noting that $\sqrt{A+B}\leq\sqrt{A}+\sqrt{B}$ for any $A, B>0$ further implies
\[
x \leq a\sqrt{\frac{x}{a^2}+\log(a^2+1)}+b \leq \sqrt{x}+a \sqrt{\log (a^2+1)}+b.
\]
Solving the above inequality with Lemma \ref{lem:quadratic} gives the desired result.
\end{proof}

The next lemma provides lower and upper bounds for specific forms of summations, which shall be used frequently in our analysis.
\begin{lemma}\label{lem:summation}
Let $\{a_k\}_{k\ge 1}$ be a nonnegative sequence and define $A_k = A_0+\sum_{i=1}^k a_i$ with $A_0>0$.
Suppose there exists $C > 0$ such that $a_k \leq C A_{k-1}$ for all $k \geq 1$. Then the following inequalities hold for all $t\ge 1$:
\begin{equation}\label{eq:sqrt-bound}
2(\sqrt{A_t} - \sqrt{A_0}) \leq \sum_{k=1}^t \frac{a_k}{\sqrt{A_{k-1}}} \leq (1+\sqrt{1+C})(\sqrt{A_t} - \sqrt{A_0}).
\end{equation}
\begin{equation}\label{eq:log-bound}
\sum_{k=1}^t \frac{a_k}{A_{k-1}} \leq (1+C)\log\frac{A_t}{A_0}.
\end{equation}
\end{lemma}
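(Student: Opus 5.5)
The plan is to work term by term, comparing each summand with an increment of $\sqrt{A_k}$ or $\log A_k$. For every $k\ge1$ we have $A_k-A_{k-1}=a_k\ge 0$, so $A_{k-1}\le A_k$. Each of the three inequalities will follow from a one-term inequality that then telescopes over $k=1,\dots,t$.

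\emph{Lower bound in \eqref{eq:sqrt-bound}.} Since $A_{k-1}\le A_k$, we get $\sqrt{A_{k-1}}\le \tfrac12(\sqrt{A_k}+\sqrt{A_{k-1}})$. Hence
\[
\frac{a_k}{\sqrt{A_{k-1}}}\ \ge\ \frac{2(A_k-A_{k-1})}{\sqrt{A_k}+\sqrt{A_{k-1}}}=2\big(\sqrt{A_k}-\sqrt{A_{k-1}}\big),
\]
and summing over $k$ gives $2(\sqrt{A_t}-\sqrt{A_0})$.

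\emph{Upper bound in \eqref{eq:sqrt-bound}.} By the same identity,
\[
\frac{a_k}{\sqrt{A_{k-1}}}=\Big(1+\sqrt{A_k/A_{k-1}}\Big)\big(\sqrt{A_k}-\sqrt{A_{k-1}}\big).
\]
The hypothesis $a_k\le CA_{k-1}$ gives $A_k\le(1+C)A_{k-1}$, so $\sqrt{A_k/A_{k-1}}\le\sqrt{1+C}$. Summing then telescopes to $(1+\sqrt{1+C})(\sqrt{A_t}-\sqrt{A_0})$.

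\emph{Bound \eqref{eq:log-bound}.} Put $u_k=a_k/A_{k-1}\in[0,C]$. Then $\log(A_k/A_{k-1})=\log(1+u_k)$, and I need $u_k\le(1+C)\log(1+u_k)$. Since $\log(1+u)\ge u/(1+u)$ for $u\ge0$, we get $u_k\le(1+u_k)\log(1+u_k)\le(1+C)\log(1+u_k)$. Summing, $\sum_k\log(A_k/A_{k-1})=\log(A_t/A_0)$.

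None of these steps is a real obstacle. The only point needing care is using the hypothesis $a_k\le CA_{k-1}$ in the form $A_k\le(1+C)A_{k-1}$, which is what allows the shifted denominator $A_{k-1}$ to be compared with $A_k$ in both upper bounds.
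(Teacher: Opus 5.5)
Your proposal is correct and follows essentially the same route as the paper: both square-root bounds come from the factorization $a_k=(\sqrt{A_k}+\sqrt{A_{k-1}})(\sqrt{A_k}-\sqrt{A_{k-1}})$ plus $A_k\le(1+C)A_{k-1}$, then telescoping. For the log bound, your steps $\log(1+u_k)\ge u_k/(1+u_k)$ and $1+u_k\le 1+C$ are exactly the paper's concavity step $\log A_k-\log A_{k-1}\ge a_k/A_k$ and its bound $a_k/A_{k-1}\le(1+C)a_k/A_k$, rewritten in the variable $u_k=a_k/A_{k-1}$.
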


\begin{proof}[Proof of Lemma \ref{lem:summation}]
Firstly, we know from $A_0>0$ that the two summations in \eqref{eq:sqrt-bound} and \eqref{eq:log-bound} are well-defined.
Using the identity $a_k = A_k - A_{k-1} = (\sqrt{A_k} + \sqrt{A_{k-1}})(\sqrt{A_k} - \sqrt{A_{k-1}})$ and $\sqrt{A_k} \geq \sqrt{A_{k-1}}$, we obtain
\begin{equation}\label{eq:sqrt-bd-1}
\frac{a_k}{\sqrt{A_{k-1}}} = \Big(1 + \frac{\sqrt{A_k}}{\sqrt{A_{k-1}}}\Big)(\sqrt{A_k} - \sqrt{A_{k-1}}) \geq 2(\sqrt{A_k} - \sqrt{A_{k-1}}).
\end{equation}
On the other hand, under the assumption $a_k \leq C A_{k-1}$, we have
\[
\frac{\sqrt{A_k}}{\sqrt{A_{k-1}}} = \sqrt{1 + \frac{a_k}{A_{k-1}}} \leq \sqrt{1 + C},
\]
which implies that
\begin{equation}\label{eq:sqrt-bd-2}
\frac{a_k}{\sqrt{A_{k-1}}} \leq (1 + \sqrt{1+C})(\sqrt{A_k} - \sqrt{A_{k-1}}).
\end{equation}
Taking summations for both \eqref{eq:sqrt-bd-1} and \eqref{eq:sqrt-bd-2} over $k=1,\dots, t$ gives \eqref{eq:sqrt-bound}.

We proceed to prove \eqref{eq:log-bound}. Firstly, it follows from $a_k \leq C A_{k-1}$ that
\[
   A_k=A_{k-1}+a_k\leq (1+C)A_{k-1}, 
\]
which implies that
\begin{equation}\label{eq:log-bd-1}
    \frac{a_k}{A_{k-1}} \leq (1+C)\frac{a_k}{A_k}
\end{equation}
Since the function $\log x$ is concave, we know that
\[
\log A_k-\log A_{k-1}\ge \log'(A_k)(A_k-A_{k-1})=\frac{a_k}{A_k}.
\]
Applying the above inequality on \eqref{eq:log-bd-1} and summing over $k=1,\dots, t$ leads to
\begin{align*}
    \sum_{k=1}^t \frac{a_k}{A_{k-1}}\leq& (1+C)\sum_{k=1}^t \frac{a_k}{A_k}\leq (1+C)\sum_{k=1}^t \bigl(\log A_k-\log A_{k-1}\bigr)\\
    =&(1+C)(\log A_t-\log A_0)=(1+C)\log \frac{A_{t}}{A_0}.
\end{align*}
The proof is complete.
\end{proof}

\subsection{Algorithm Design}
Recall that AdaGrad \citep{duchi2011adaptive} employs the adaptive step size $O\big(1/\sqrt{\sum_{i=1}^k\|\nabla f(x_i)\|^2}\big)$. The motivation is that $\nabla f(x_k)$ is the stationarity measure when $h=0$ since one has $\nabla f(x^*) = 0$ for any stationary point $x^*$. Consequently, for smooth $f$, the accumulated norms $\sum_{i=1}^k\|\nabla f(x_i)\|^2$ grow slowly as $\nabla f(x_k)$ diminishes, making the step size approximately constant. For nonsmooth $f$, the gradients stay bounded away from zero, leading to decreasing step sizes $O(1/\sqrt{k})$.

However, this strategy does not directly extend to the composite setting. The reason is that the gradient $\nabla f(x^*)$ at a stationary point $x^*$ of \eqref{main-prob} is generally nonzero since the first-order optimality condition merely requires $-\nabla f(x^*)\in\partial h(x^*)$. Hence, accumulating $\|\nabla f(x_k)\|^2$ would drive the step size to zero even for smooth $f$, which is undesirable. The appropriate stationarity measure in the composite setting should be the gradient mapping $\gcal_\eta(\cdot)$ \eqref{def:grad-mapping}, which reduces to $\nabla f$ if $h=0$ and satisfies $\gcal_\eta(x^*)=0$ at any stationary point $x^*$ of \eqref{main-prob}.
Moreover, the update rules in the form of~\eqref{Alg:prox-grad} provide direct access to the gradient mapping $\gcal_k:=\gcal_{\eta_k}(x_k)$ due to
\begin{equation}\label{def:Gt}
\gcal_k=\frac{1}{\eta_k}\big(x_k-\p_{\eta_k h}(x_k-\eta_k \nabla f(x_k))\big)=\frac{1}{\eta_k}(x_k - x_{k+1}).
\end{equation}
This motivates us to accumulate $\|\gcal_k\|^2$ in the denominator of step sizes, which yields the following adaptive scheme at iteration $k$:
\begin{equation}\label{eq:Sk-def}
\eta_k=\frac{\eta}{S_k}, \quad S_k^2=\gamma^2+\sum_{j=1}^{k-1}\|\gcal_j\|^2,
\end{equation}
where $\gamma, \eta>0$ are hyperparameters. Different from the standard AdaGrad, we accumulate up to index $k-1$ instead of $k$ because $\gcal_k=(x_k-x_{k+1})/\eta_k$ is not available until the $k$-th iteration is performed. Note that the step size \eqref{eq:Sk-def} is different from the ones in \citet{joulani2020simpler} and \citet{rodomanov2024universality}, which accumulate squared norms of gradient differences to handle the composite structure.

Applying \eqref{eq:Sk-def}, we obtain the adaptive proximal gradient method for solving \eqref{main-prob}, as summarized in Algorithm~\ref{Alg:adaprox}. Note that the update of $S_{k+1}^2$ in Algorithm~\ref{Alg:adaprox} is equivalent to \eqref{eq:Sk-def} in the deterministic setting because
\[
S^2_{k+1} = S^2_k\Big(1+\frac{1}{\eta^2}\|x_{k+1}-x_k\|^2\Big) = S^2_k + \frac{1}{\eta_k^2}\|x_{k+1}-x_k\|^2 = S^2_k + \|\gcal_k\|^2.
\] 
For stochastic settings, we define the stochastic gradient mapping as
\begin{equation}\label{def:tilde-Gt}
\widetilde{\gcal}_k:=\frac{1}{\eta_k}\big(x_k-\p_{\eta_k h}(x_k-\eta_k g_k)\big) = \frac{1}{\eta_k}(x_k - x_{k+1}).
\end{equation}
Then we similarly have
\begin{equation}\label{eq:Sk-tilde}
S_k^2=\gamma^2+\sum_{j=1}^{k-1}\|\widetilde{\gcal}_j\|^2,
\end{equation}

\begin{algorithm}
    \caption{Adaptive Proximal Gradient Method for Solving \eqref{main-prob}}
    \label{Alg:adaprox}
    \begin{algorithmic}
    \REQUIRE initial point $x_1 \in \dom(h)$; hyperparameters $\gamma, \eta>0$; batch sizes $\{b_k\}_{k\ge 1}$ (only needed for stochastic settings).
    \STATE Initialize $S_1 := \gamma$.
    \FOR{$k = 1, 2, \dots$}
    \IF{$\nabla f$ is accessible}
    \STATE Compute full gradient $g_k:=\nabla f(x_k)$.
    \ELSE
    \STATE Sample $\{\xi_{k_j}\}_{j=1}^{b_k}\subset \dcal$ uniformly at random.
    \STATE Compute gradient approximation $g_k:=\frac{1}{b_k}\sum_{j=1}^{b_k}\nabla \ell(x_k;\xi_{k_j})$.
    \ENDIF
    \STATE Generate the next iterate $x_{k+1}$ via
    \[
    x_{k+1}:=\argmin_{x\in\rbb^d}\Big\{\langle g_k, x\rangle+\frac{S_k}{2 \eta}\|x-x_k\|^2+h(x)\Big\}.
    \]
    \STATE Update $S^2_{k+1}=S^2_k\big(1+\|x_{k+1}-x_k\|^2/\eta^2\big)$.
    \ENDFOR
    \end{algorithmic}
\end{algorithm}

To conduct convergence analysis on Algorithm \ref{Alg:adaprox}, we make the following boundedness assumption on the iterates $\{x_k\}_{k\ge 1}$. Similar boundedness assumptions are widely adopted for analysis of AdaGrad-type methods in the literature (see, e.g., \citep{duchi2011adaptive,ene2021adaptive,joulani2020simpler,kavis2019unixgrad}). In the context of the composite problem \eqref{main-prob}, it can be satisfied if, e.g., $\dom(h)$ is bounded or the operator $\argmin_{x\in\rbb^d}\{\langle u, x\rangle+h(x)\}$ is bounded for all $u\in \rbb^d$~\citep[Lemma 2]{ghadimi2016accelerated}.
\begin{assumption}\label{ass:seq-bd}
    There exists a constant $D>0$ such that $\|x_{k+1}-x_k\|\leq D$ for all $k\ge 1$. Moreover, if $F$ is convex, we also assume that $\|x_k-x^*\|\leq D$ for all $k\ge 1$.
\end{assumption}
Note that while Assumption~\ref{ass:seq-bd} is used in the analysis, knowledge of the diameter $D$ is not required for implementing Algorithm~\ref{Alg:adaprox}.

\section{Convergence Analysis in the Nonconvex Setting}
\label{sec:nc}
In this section, we study the convergence of Algorithm \ref{Alg:adaprox} when $f$ is $L$-smooth but possibly nonconvex. We first analyze the simpler deterministic case in Section \ref{sec:nc-deter} and then move on to the stochastic setting in Section \ref{sec:nc-stoc}.

Before proceeding, we first introduce the following classical descent lemma for PG-type updates, which serves as a starting point for our analysis in this section. Its proof can be easily found in related literature (see, e.g., \citep{parikh2014proximal}). We provide a proof here for completeness.
\begin{lemma}\label{lem:F-descent}
Suppose $f$ is $L$-smooth. Let $\{x_k\}_{k\ge 1}$ be the sequence generated by the iterative scheme \eqref{Alg:prox-grad}. Then for any $k\ge 1$, it holds that
\[
F(x_{k+1}) \leq F(x_k)+\langle\nabla f(x_k)-g_k, x_{k+1}-x_k\rangle-\frac{1}{\eta_k}\|x_{k+1}-x_k\|^2+\frac{L}{2}\|x_{k+1}-x_k\|^2.
\]
\end{lemma}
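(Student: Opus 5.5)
The plan combines two standard inequalities: the upper quadratic bound from $L$-smoothness of $f$, and the optimality condition of the proximal subproblem defining $x_{k+1}$, which controls $h(x_{k+1})$. Here $x_{k+1}=\p_{\eta_k h}(x_k-\eta_k g_k)$, with $g_k$ the gradient used in the update; in the deterministic case $g_k=\nabla f(x_k)$ and the inner-product term vanishes.

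\textbf{Step 1 (smoothness).} Apply the descent property of $L$-smooth $f$ with $x=x_k$ and $y=x_{k+1}$:
\[
f(x_{k+1})\le f(x_k)+\langle \nabla f(x_k),x_{k+1}-x_k\rangle+\frac{L}{2}\|x_{k+1}-x_k\|^2 .
\]

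\textbf{Step 2 (prox optimality).} The subproblem objective $\langle g_k,x\rangle+\frac{1}{2\eta_k}\|x-x_k\|^2+h(x)$ is minimized at $x_{k+1}$. Its optimality condition gives
\[
u:=\frac{1}{\eta_k}(x_k-x_{k+1})-g_k\in\partial h(x_{k+1}).
\]
Apply the subgradient inequality for $h$ at $x_{k+1}$, evaluated at $x_k$:
\[
h(x_k)\ge h(x_{k+1})+\langle u,x_k-x_{k+1}\rangle ,
\]
which rearranges to
\[
h(x_{k+1})\le h(x_k)+\langle g_k,x_k-x_{k+1}\rangle-\frac{1}{\eta_k}\|x_{k+1}-x_k\|^2 .
\]
This step uses $x_k\in\dom(h)$. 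That holds for $k=1$ by the initialization, and for $k\ge2$ because every prox output lies in $\dom(h)$.

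\textbf{Step 3 (combine).} Add the two bounds. The inner products merge into
\[
\langle \nabla f(x_k)-g_k,\,x_{k+1}-x_k\rangle ,
\]
and the quadratic terms give $-\frac{1}{\eta_k}\|x_{k+1}-x_k\|^2+\frac{L}{2}\|x_{k+1}-x_k\|^2$. Together with $F=f+h$, this is exactly the claimed inequality.

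\textbf{Main care point.} The only delicate step is getting the full coefficient $\frac{1}{\eta_k}$ on the quadratic, rather than $\frac{1}{2\eta_k}$. This requires the subgradient inequality above, not merely comparing subproblem values at $x_{k+1}$ and $x_k$, which would lose a factor of two. Alternatively, one can use the $\frac{1}{\eta_k}$-strong convexity of the subproblem objective, which gives the same result.
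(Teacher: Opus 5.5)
Your proposal is correct and takes essentially the same approach as the paper. The paper also uses the optimality condition of the prox subproblem and convexity of $h$, evaluated at $x=x_k$, to bound $h(x_{k+1})$ with the full $\frac{1}{\eta_k}$ coefficient, then adds this to the $L$-smoothness upper bound; your remark that $x_k\in\dom(h)$ is a harmless detail the paper leaves implicit.
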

\begin{proof}[Proof of Lemma \ref{lem:F-descent}]
From the optimality condition of \eqref{Alg:prox-grad}, we have
\[
0\in g_k + \frac{1}{\eta_k}(x_{k+1} - x_k)+\partial h(x_{k+1}),
\]
which, together with the convexity of $h$, implies that
\begin{equation}\label{eq:convex-h}
h(x) \geq h(x_{k+1}) + \Big\langle g_k + \frac{1}{\eta_k}(x_{k+1} - x_k), x_{k+1}-x\Big\rangle,~\forall x \in \mathbb{R}^d.
\end{equation}
Setting $x=x_k$ in \eqref{eq:convex-h} and rearranging the terms gives
\begin{equation}\label{eq:stoch-h-descent}
h(x_{k+1}) \leq h(x_k)-\langle g_k, x_{k+1}-x_k\rangle-\frac{1}{\eta_k}\|x_{k+1}-x_k\|^2.
\end{equation}
On the other hand, it follows from the $L$-smoothness of $f$ that
\begin{equation}\label{eq:smooth-f}
f(x_{k+1}) \leq f(x_k) + \langle \nabla f(x_k), x_{k+1} - x_k\rangle + \frac{L}{2}\|x_{k+1} - x_k\|^2.
\end{equation}
Combining \eqref{eq:stoch-h-descent} and \eqref{eq:smooth-f} leads to the desired result, hence completes the proof.
\end{proof}

\subsection{Deterministic Setting}\label{sec:nc-deter}
We first consider the deterministic setting where we use full gradient $g_k:=\nabla f(x_k)$ in Algorithm \ref{Alg:adaprox}.
When $f$ is possibly nonconvex, the objective function $F$ in \eqref{main-prob} may also be nonconvex. Finding a minimizer of $F$ is therefore difficult. Fortunately, we can show that the objective values at the iterates produced by Algorithm \ref{Alg:adaprox} are uniformly upper bounded, as indicated in the following lemma. 
For notational convenience, we denote $\Delta_t:=F(x_t)-F^*$.

\begin{lemma}\label{lem:bd-F}
    Suppose $f$ is $L$-smooth and Assumption \ref{ass:seq-bd} holds. Let $\{x_k\}_{k\ge 1}$ be the iterates generated by Algorithm \ref{Alg:adaprox} with full gradient $g_k:=\nabla f(x_k)$ and any $\gamma, \eta>0$. Then for any $t \geq 1$, we have
    \[
        \Delta_t\leq M:= \Delta_1+ \Big[L(\eta^2+D^2)\log\frac{L\eta}{2\gamma}+\frac{LD^2}{2}\Big]_+.
    \]
\end{lemma}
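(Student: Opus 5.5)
The plan is to telescope the descent inequality of Lemma~\ref{lem:F-descent} and control only the iterations where the step size is still "too large", that is, where $S_k < L\eta/2$. In the deterministic setting $g_k=\nabla f(x_k)$, so the inner-product term vanishes. With $\eta_k=\eta/S_k$, Lemma~\ref{lem:F-descent} gives
\[
F(x_{k+1})\le F(x_k)-\Big(\frac{S_k}{\eta}-\frac{L}{2}\Big)\|x_{k+1}-x_k\|^2 .
\]
Summing over $k=1,\dots,t-1$ yields
\[
\Delta_t\le \Delta_1+\sum_{k=1}^{t-1}\Big(\frac{L}{2}-\frac{S_k}{\eta}\Big)\|x_{k+1}-x_k\|^2 .
\]
Only indices with $S_k<L\eta/2$ contribute positively. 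Since $S_k$ is nondecreasing, these indices form an initial segment $\{1,\dots,k_0\}$. If this set is empty (e.g.\ $\gamma\ge L\eta/2$), then $\Delta_t\le\Delta_1\le M$ and we are done, which is why the bracket in $M$ carries a positive part. Otherwise I drop the negative terms and the $-S_k/\eta$ part, which leaves
\[
\Delta_t\le\Delta_1+\frac{L}{2}\sum_{k=1}^{k_0}\|x_{k+1}-x_k\|^2 .
\]

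Next I isolate the last term $k=k_0$ and bound it crudely by Assumption~\ref{ass:seq-bd}: $\frac{L}{2}\|x_{k_0+1}-x_{k_0}\|^2\le \frac{LD^2}{2}$. This accounts for the $LD^2/2$ in $M$.

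For $k\le k_0-1$, I rewrite $\|x_{k+1}-x_k\|^2=\eta_k^2\|\gcal_k\|^2=\eta^2\,\|\gcal_k\|^2/S_k^2$. I then apply the log bound \eqref{eq:log-bound} of Lemma~\ref{lem:summation} with $a_k=\|\gcal_k\|^2$ and $A_{k-1}=S_k^2$, so that $A_0=\gamma^2$. The required condition $a_k\le CA_{k-1}$ follows from Assumption~\ref{ass:seq-bd}, because
\[
\|\gcal_k\|^2=\frac{S_k^2}{\eta^2}\|x_{k+1}-x_k\|^2\le \frac{D^2}{\eta^2}S_k^2 ,
\]
so $C=D^2/\eta^2$. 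This gives
\[
\sum_{k=1}^{k_0-1}\frac{\|\gcal_k\|^2}{S_k^2}\le\Big(1+\frac{D^2}{\eta^2}\Big)\log\frac{S_{k_0}^2}{\gamma^2}\le \Big(1+\frac{D^2}{\eta^2}\Big)\,2\log\frac{L\eta}{2\gamma},
\]
where the last step uses $S_{k_0}<L\eta/2$. Multiplying by $\frac{L}{2}\eta^2$ produces exactly $L(\eta^2+D^2)\log\frac{L\eta}{2\gamma}$.

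The main subtlety is the last step: I must not apply the summation lemma up to $k_0$ itself. Doing so would involve $S_{k_0+1}$, which need not be below $L\eta/2$, and would cost an extra factor $(1+C)$ inside the logarithm. Peeling off the $k_0$ term and bounding it by $D^2$ avoids this, and the bound holds uniformly in $t$.
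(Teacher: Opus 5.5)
Your proposal is correct and follows essentially the same argument as the paper. Both telescope the descent inequality and keep only the initial segment of indices with $S_k \le L\eta/2$. Both peel off the last such term using the bound $LD^2/2$, and apply \eqref{eq:log-bound} with $C = D^2/\eta^2$ to the remaining terms, ending at $S_{k_0}$. The only differences are cosmetic: you sum to $t-1$ to bound $\Delta_t$ directly rather than bounding $\Delta_{t+1}$, and you define the segment with a strict inequality.
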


\begin{proof}[Proof of Lemma \ref{lem:bd-F}]
Firstly, since $g_k=\nabla f(x_k)$ and $\|x_{k+1} - x_k\| = \eta_k\|\gcal_k\|$, we obtain from Lemma \ref{lem:F-descent} that
\begin{align}
F(x_{k+1}) &\leq F(x_k) - \frac{1}{\eta_k}\|x_{k+1} - x_k\|^2 + \frac{L}{2}\|x_{k+1} - x_k\|^2 \nonumber\\
& = F(x_k) - \eta_k\|\gcal_k\|^2 + \frac{L\eta_k^2}{2}\|\gcal_k\|^2\label{eq:descent-F}.
\end{align}
To proceed, summing \eqref{eq:descent-F} over $k = 1, \ldots, t$ and subtracting both sides by $F^*$ yields
\begin{equation}\label{eq:F-bd-1}
\Delta_{t+1} \leq \Delta_1 + \sum_{k=1}^t \Big(\frac{L}{2} - \frac{1}{\eta_k}\Big)\eta_k^2\|\gcal_k\|^2.
\end{equation}

We consider two cases. Firstly, if $S_1=\gamma>\frac{L\eta}{2}$, then it follows that
\[
\frac{1}{\eta_k}=\frac{S_k}{\eta}\ge \frac{S_1}{\eta}>\frac{L}{2},~\forall k\ge 1,
\]
which implies that $\Delta_{t+1} \leq \Delta_1\leq M$ for all $t\ge 1$. Otherwise if $\gamma\leq\frac{L\eta}{2}$, then $t^* := \max\{k \in [t] : L/2 \geq 1/\eta_k\}$ is well-defined and hence $S_{t^*} \leq \frac{L\eta}{2}$.
For $k > t^*$, we know from the definition of $t^*$ that $L/2 \leq 1/\eta_k$. Therefore, it follows from \eqref{eq:F-bd-1} that
\begin{equation}\label{eq:F-bd-3}
\Delta_{t+1} \leq \Delta_1 + \sum_{k=1}^{t^*} \eta_k^2\Big(\frac{L}{2} - \frac{1}{\eta_k}\Big)\|\gcal_k\|^2\leq \Delta_1 + \frac{L}{2}\sum_{k=1}^{t^*} \eta_k^2\|\gcal_k\|^2 = \Delta_1 + \frac{L\eta^2}{2}\sum_{k=1}^{t^*} \frac{\|\gcal_k\|^2}{S_k^2}.
\end{equation}
From Assumption \ref{ass:seq-bd}, we know that
\[
\|\gcal_k\|^2 = \frac{\|x_{k+1}-x_k\|^2}{\eta_k^2}\leq \frac{D^2}{\eta_k^2} = \frac{D^2}{\eta^2}S_k^2,~\forall k\ge 1.
\]
Applying \eqref{eq:log-bound} in Lemma \ref{lem:summation} with $a_k:=\|\gcal_k\|^2$, $A_k:=S_{k+1}^2$, and $C:=D^2/\eta^2$, we derive from \eqref{eq:F-bd-3} that
\begin{align*}
\Delta_{t+1} \leq&\Delta_1 + \frac{L\eta^2}{2}\Big(\sum_{k=1}^{t^*-1} \frac{\|\gcal_k\|^2}{S_k^2}+\frac{\|\gcal_{t^*}\|^2}{S_{t^*}^2}\Big)\\
\leq& \Delta_1 + \frac{L\eta^2}{2}\Big(1+\frac{D^2}{\eta^2}\Big)\log\frac{S_{t^*}^2}{S_1^2}+\frac{LD^2}{2}\\
\leq& \Delta_1 + L(\eta^2+D^2)\log\frac{L\eta}{2\gamma}+\frac{LD^2}{2},
\end{align*}
where the last inequality holds from $S_{t^*} \leq \frac{L\eta}{2}$ and $S_1=\gamma$.
The proof is thus complete.
\end{proof}
Using Lemma \ref{lem:bd-F}, we can establish $O(1/t)$ stationary convergence rates for Algorithm \ref{Alg:adaprox}, as stated in the following theorem.
\begin{theorem}\label{thm:nc-deter}
Consider the same setting as in Lemma \ref{lem:bd-F}. Let $\{x_k\}_{k\ge 1}$ be the iterates generated by Algorithm \ref{Alg:adaprox} with full gradient $g_k:=\nabla f(x_k)$ and any $\gamma, \eta>0$. Then for any $t \geq 1$, we have
\[
 \frac{1}{t}\sum_{k=1}^t\|\gcal_k\|^2 \leq \frac{1}{t}\Big[\Big(\frac{M}{\eta}+L\eta+\frac{LD}{2}\Big)^2 + \frac{2M\gamma}{\eta}\Big].
\]
\end{theorem}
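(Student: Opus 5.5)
Since $\sum_{k=1}^t\|\gcal_k\|^2 = S_{t+1}^2-\gamma^2$, it suffices to show $S_{t+1}^2 \le \big(\frac{M}{\eta}+L\eta+\frac{LD}{2}\big)^2 + \frac{2M\gamma}{\eta}+\gamma^2$, or the slightly stronger form $S_{t+1}^2-\gamma^2\le(\cdot)^2+\frac{2M\gamma}{\eta}$ obtained directly. I will start from the per-step descent inequality \eqref{eq:descent-F}, written as
\[
\eta_k\|\gcal_k\|^2 \le F(x_k)-F(x_{k+1}) + \frac{L\eta_k^2}{2}\|\gcal_k\|^2 .
\]
Multiplying by $S_k/\eta = 1/\eta_k$ gives
\[
\|\gcal_k\|^2 \le \frac{S_k}{\eta}\big(\Delta_k-\Delta_{k+1}\big) + \frac{L\eta}{2}\frac{\|\gcal_k\|^2}{S_k}.
\]
Summing over $k=1,\dots,t$ requires handling two terms.

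For the first term, I will use summation by parts together with $0\le \Delta_k\le M$ from Lemma \ref{lem:bd-F} and the monotonicity of $S_k$. This gives
\[
\sum_{k=1}^t S_k(\Delta_k-\Delta_{k+1}) = S_1\Delta_1 + \sum_{k=2}^{t}(S_k-S_{k-1})\Delta_k - S_t\Delta_{t+1}\le \gamma M + M(S_t-S_1)\le M S_{t+1}.
\]
A sharper bookkeeping yields $M\gamma + M(S_{t+1}-\gamma)$; either way the first term is at most $\frac{M}{\eta}S_{t+1}$.

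For the second term, I will bound $\sum_k \|\gcal_k\|^2/S_k$ by \eqref{eq:sqrt-bound} of Lemma \ref{lem:summation}, with $a_k=\|\gcal_k\|^2$, $A_k=S_{k+1}^2$, and $C=D^2/\eta^2$, which is justified by Assumption \ref{ass:seq-bd} exactly as in the proof of Lemma \ref{lem:bd-F}. This gives
\[
\sum_k \frac{\|\gcal_k\|^2}{S_k}\le\Big(1+\sqrt{1+D^2/\eta^2}\Big)S_{t+1}\le \Big(2+\frac{D}{\eta}\Big)S_{t+1}.
\]
Hence the second term is at most $\big(L\eta+\frac{LD}{2}\big)S_{t+1}$. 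Combining both terms,
\[
S_{t+1}^2-\gamma^2 \le \Big(\frac{M}{\eta}+L\eta+\frac{LD}{2}\Big)S_{t+1}.
\]

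The main obstacle is matching the stated constant, which contains $\frac{2M\gamma}{\eta}$ rather than a $\gamma^2$ term. To obtain it, I will keep the $S_1\Delta_1\le M\gamma$ contribution separate from the part proportional to $S_{t+1}$. One way is to write the first term as $\frac{M}{\eta}(S_{t+1}-\gamma)+\frac{M\gamma}{\eta}$, and to use the refined lower-bound sum $\sum_k \|\gcal_k\|^2/S_k$ with $\sqrt{A_t}-\sqrt{A_0}$. This yields an inequality of the form $x^2\le ax+b$ with $x$ either $\sqrt{\sum_k\|\gcal_k\|^2}$ or $S_{t+1}$, $a=\frac{M}{\eta}+L\eta+\frac{LD}{2}$, and $b=\frac{M\gamma}{\eta}$ (after absorbing $\gamma^2$ into the left-hand side via $S_{t+1}^2-\gamma^2=\sum_k\|\gcal_k\|^2$ and $S_{t+1}\le \gamma+\sqrt{\sum_k\|\gcal_k\|^2}$). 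Lemma \ref{lem:quadratic} then gives $\sum_{k=1}^t\|\gcal_k\|^2\le a^2+2b$, and dividing by $t$ yields the claim.
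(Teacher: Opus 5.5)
Your proposal is correct and follows essentially the same route as the paper's proof: summation by parts using $0\le\Delta_k\le M$ and monotone $S_k$, the upper bound in \eqref{eq:sqrt-bound} keeping the $-\gamma$, the split $S_{t+1}\le\gamma+\sqrt{\sum_{k=1}^t\|\gcal_k\|^2}$, and Lemma~\ref{lem:quadratic} with $a=\frac{M}{\eta}+L\eta+\frac{LD}{2}$ and $b=\frac{M\gamma}{\eta}$. Of your two hedged options, $x=\sqrt{\sum_{k=1}^t\|\gcal_k\|^2}$ is the one that yields the stated constant. You also correctly saw that the cruder version, which drops $-\gamma$ in the second term, would give $2a\gamma$ in place of $2M\gamma/\eta$.
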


\begin{proof}[Proof of Theorem \ref{thm:nc-deter}]
We start by rearranging \eqref{eq:descent-F} to obtain
\begin{align}
\|\gcal_k\|^2 \leq \frac{1}{\eta_k}(F(x_k) - F(x_{k+1})) + \frac{L}{2}\eta_k\|\gcal_k\|^2= \frac{1}{\eta_k}(\Delta_k - \Delta_{k+1}) + \frac{L}{2}\eta_k\|\gcal_k\|^2\label{eq:sum-G2-1}.
\end{align}
Summing \eqref{eq:sum-G2-1} over $k = 1, \ldots, t$ leads to
\begin{align}
\sum_{k=1}^t \|\gcal_k\|^2 &\leq \frac{\Delta_1}{\eta_1} + \sum_{k=2}^t\Big(\frac{1}{\eta_k} - \frac{1}{\eta_{k-1}}\Big)\Delta_k - \frac{\Delta_{t+1}}{\eta_t} + \frac{L}{2}\sum_{k=1}^t \eta_k\|\gcal_k\|^2 \nonumber\\
&\leq \frac{M}{\eta_1} + M\sum_{k=2}^t\Big(\frac{1}{\eta_k} - \frac{1}{\eta_{k-1}}\Big) + \frac{L}{2}\sum_{k=1}^t \eta_k\|\gcal_k\|^2 \nonumber\\
&= \frac{M}{\eta_t} + \frac{L}{2}\sum_{k=1}^t \eta_k\|\gcal_k\|^2 = \frac{M}{\eta}S_t + \frac{L\eta}{2}\sum_{k=1}^t \frac{\|\gcal_k\|^2}{S_k}\label{eq:sum-G2-2},
\end{align}
where we used $\Delta_k\leq M$ and $\frac{1}{\eta_k} - \frac{1}{\eta_{k-1}}\ge 0$ since $\eta_k$ is non-increasing.
Applying \eqref{eq:sqrt-bound} in Lemma \ref{lem:summation} with $a_k:=\|\gcal_k\|^2$, $A_k:=S_{k+1}^2$, and $C:=D^2/\eta^2$, we further derive from \eqref{eq:sum-G2-2} that
\begin{align}
\sum_{k=1}^t \|\gcal_k\|^2 &\leq \frac{M}{\eta} \sqrt{\gamma^2+\sum_{k=1}^t\|\gcal_k\|^2} + \frac{L\eta}{2}\Big(1+\sqrt{1+D^2/\eta^2}\Big)\Bigg(\sqrt{\gamma^2+\sum_{k=1}^t\|\gcal_k\|^2} -\gamma\Bigg)\nonumber\\
&\leq \frac{M\gamma}{\eta} + \frac{M}{\eta}\sqrt{\sum_{k=1}^t\|\gcal_k\|^2}+\frac{L\eta}{2}\big(2+D/\eta\big)\sqrt{\sum_{k=1}^t\|\gcal_k\|^2} \nonumber\\
&= \frac{M\gamma}{\eta} + \Big(\frac{M}{\eta} + L\eta + \frac{LD}{2}\Big)\sqrt{\sum_{k=1}^t\|\gcal_k\|^2}\label{eq:sum-G2-3},
\end{align}
where we used the basic inequality $\sqrt{a+b} \leq \sqrt{a} + \sqrt{b}$ to obtain the second inequality. Solving \eqref{eq:sum-G2-3} with Lemma \ref{lem:quadratic} gives
\[
  \sum_{k=1}^t \|\gcal_k\|^2 \leq \Big(\frac{M}{\eta} + L\eta + \frac{LD}{2}\Big)^2 + \frac{2M\gamma}{\eta},
\]
which completes the proof after dividing by $t$.
\end{proof}

\subsection{Stochastic Setting}\label{sec:nc-stoc}
In this subsection, we proceed to investigate the convergence of Algorithm \ref{Alg:adaprox} when only stochastic gradients are accessible at each iteration, i.e., $g_k:=\frac{1}{b_k}\sum_{j=1}^{b_k}\nabla \ell(x_k;\xi_{k_j})$ for some batch size $b_k\ge 1$.
To this end, we make the following standard bounded-variance assumption on the stochastic gradient $\nabla\ell(x_k;\xi)$.
\begin{assumption}\label{ass:bd-stoch}
The stochastic gradient $\nabla\ell(x_k;\xi)$ is unbiased, i.e., $\ebb_k[\nabla\ell(x_k;\xi)]=\nabla f(x_k)$ for all $k \ge 1$, where the expectation is taken w.r.t. the randomness of $\xi$. Moreover, there exists $\sigma>0$ such that
\[
\ebb_k[\|\nabla\ell(x_k;\xi)-\nabla f(x_k)\|^2] \leq \sigma^2.
\]
\end{assumption}
For notational convenience, we denote the stochastic noises as
\[
\delta_{k, j}:=\nabla f(x_k)-\nabla\ell(x_k;\xi_{k_j}),\quad \delta_k:=\frac{1}{b_k}\sum_{j=1}^{b_k}\delta_{k, j} = \nabla f(x_k)-g_k.
\]
Then it follows from the independence among the batch samples $\{\xi_{k_j}\}_{j=1}^{b_k}$ and Assumption \ref{ass:bd-stoch} that 
\[
\ebb_k\bigg[\bigg\|\sum_{j=1}^{b_k}\delta_{k,j}\bigg\|^2\bigg]=\sum_{i,j=1}^{b_k}\ebb_k\big[\langle \delta_{k, i}, \delta_{k, j}\rangle\big] = \sum_{j=1}^{b_k}\ebb_k\big[\|\delta_{k, j}\|^2\big]\leq \sigma^2 b_k,
\]
where to obtain the second equality, we used $\ebb_k\big[\langle \delta_{k, i}, \delta_{k, j}\rangle\big]=\big\langle \ebb_k\big[\delta_{k, i}\big], \ebb_k\big[\delta_{k, j}\big]\big\rangle=0$ for $i\neq j$ due to the independence between $\delta_{k, i}$ and $\delta_{k, j}$.
This further implies that
\begin{equation}\label{eq:var-bd}
    \ebb_k\big[\|\delta_k\|^2\big] = \ebb_k\bigg[\bigg\|\frac{1}{b_k}\sum_{j=1}^{b_k}\delta_{k,j}\bigg\|^2\bigg] =\frac{1}{b_k^2} \ebb_k\bigg[\bigg\|\sum_{j=1}^{b_k}\delta_{k,j}\bigg\|^2\bigg]\leq \frac{\sigma^2}{b_k}.
\end{equation}

Note that the analysis of Algorithm \ref{Alg:adaprox} in such a stochastic setting is more challenging compared to the deterministic case, because the adaptive step sizes $\{\eta_k\}_{k\ge 1}$ become random variables. A direct consequence is that our previous analysis relying on bounding objective gaps is not directly applicable here. To see this, while we may still show that the objective gaps $\{\Delta_k\}_{k\ge 1}$ are uniformly bounded in expectation, it is hard to bound
\[
\ebb\bigg[\sum_{k=2}^t\Big(\frac{1}{\eta_k} - \frac{1}{\eta_{k-1}}\Big)\Delta_k\bigg]
\]
due to the coupling between the two random variables $\frac{1}{\eta_k} - \frac{1}{\eta_{k-1}}$ and $\Delta_k$, which is crucial for establishing Theorem \ref{thm:nc-deter}. This issue regarding randomness in $\{\eta_k\}_{k\ge 1}$ is also identified as the main bottleneck for nonconvex analysis of AdaGrad in non-composite settings \citep{attia2023sgd,faw2022power,wang2023convergence,ward2020adagrad}. Most of these works handle it by introducing surrogates of step sizes that are measurable given all historical iterates, but result in highly involved analyses.
Besides, the composite structure of \eqref{main-prob} introduces additional challenges for the analysis because in this case
\[
\langle\nabla f(x_k), x_{k+1} - x_k\rangle = -\eta_k\langle\nabla f(x_k), \widetilde{\gcal}_k\rangle \neq -\eta_k\langle\nabla f(x_k), g_k\rangle,
\]
which makes existing nonconvex analysis for non-composite settings inapplicable.

To overcome this difficulty, we develop a new approach by carefully estimating the sum of inner products between the stochastic noises and iterative differences, which allows us to bound $\ebb[S_t]$ without introducing new surrogates, leading to substantially simplified analysis. We present this technical ingredient in the following lemma. Note that it does not rely on any conditions on $f$ and hence is applicable across different settings of $f$, as we shall demonstrate later.

\begin{lemma}\label{lem:stoc-ip-sum}
    Suppose Assumptions \ref{ass:seq-bd} and \ref{ass:bd-stoch} hold. Let $\{x_k\}_{k\ge 1}$ be the iterates generated by Algorithm \ref{Alg:adaprox} with batch sizes $\{b_k\}_{k\ge 1}$ and any $\gamma, \eta>0$. Then for any $t \geq 1$, we have
    \[
    \ebb\Big[\sum_{k=1}^t\langle\delta_k, x_{k+1}-x_k\rangle\Big] \leq \sigma \sqrt{2(\eta^2+D^2) \sum_{k=1}^t \frac{1}{b_k}} \sqrt{\log \ebb\bigg[\frac{S_{t+1}}{\gamma}\bigg]},
    \]
\end{lemma}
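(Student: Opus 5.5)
The plan is to split $x_{k+1}-x_k$ into a predictable part, whose inner product with $\delta_k$ has zero conditional mean, and a remainder that is controlled by the step size. Let $\hat x_{k+1}:=\p_{\eta_k h}(x_k-\eta_k\nabla f(x_k))$ be the deterministic proximal step from $x_k$. Since $\eta_k=\eta/S_k$ depends only on $\mathcal{F}_{k-1}$, $\hat x_{k+1}$ is $\mathcal{F}_{k-1}$-measurable. Assumption~\ref{ass:bd-stoch} then gives $\ebb_k[\langle\delta_k,\hat x_{k+1}-x_k\rangle]=0$. Hence
\[
\ebb\Big[\sum_{k=1}^t\langle\delta_k,x_{k+1}-x_k\rangle\Big]=\ebb\Big[\sum_{k=1}^t\langle\delta_k,x_{k+1}-\hat x_{k+1}\rangle\Big].
\]
By the nonexpansiveness \eqref{eq:prox-nonexpansive}, $\|x_{k+1}-\hat x_{k+1}\|\le\eta_k\|g_k-\nabla f(x_k)\|=\eta_k\|\delta_k\|$. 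So each summand is at most $\min\{\eta_k\|\delta_k\|^2,\ \|\delta_k\|\,\|x_{k+1}-\hat x_{k+1}\|\}$.

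Next comes Cauchy--Schwarz with weights chosen to produce $\sum_k 1/b_k$ and a logarithm of $S_{t+1}$. I write $\langle\delta_k,x_{k+1}-\hat x_{k+1}\rangle\le \|\delta_k\|\,\|x_{k+1}-\hat x_{k+1}\|$ and bound the sum by
\[
\sqrt{\sum_k \|\delta_k\|^2 \tfrac{S_{k+1}^2}{S_k^2}\cdot c_k}\;\sqrt{\sum_k \|x_{k+1}-\hat x_{k+1}\|^2\tfrac{S_k^2}{S_{k+1}^2}c_k^{-1}}.
\]
Simpler is the following split: bound $\|x_{k+1}-\hat x_{k+1}\|\le \|x_{k+1}-x_k\|+\|\hat x_{k+1}-x_k\|$, and use $\|x_{k+1}-x_k\|^2/\eta^2=\|\widetilde{\gcal}_k\|^2/S_k^2$. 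The log-type estimate \eqref{eq:log-bound} of Lemma~\ref{lem:summation}, with $a_k=\|\widetilde\gcal_k\|^2$, $A_k=S_{k+1}^2$ and $C=D^2/\eta^2$ (valid by Assumption~\ref{ass:seq-bd}), gives
\[
\sum_k\frac{\|x_{k+1}-x_k\|^2}{\eta^2}\le \Big(1+\frac{D^2}{\eta^2}\Big)\log\frac{S_{t+1}^2}{\gamma^2}.
\]
This is exactly the factor $2(\eta^2+D^2)\log(S_{t+1}/\gamma)$ after multiplying by $\eta^2$. Concretely, I aim for
\[
\sum_k\langle\delta_k,x_{k+1}-x_k\rangle \le \sqrt{\textstyle\sum_k b_k\|\delta_k\|^2\cdot\frac{1}{b_k}}\ \cdots
\]
More simply, I apply the weighted Cauchy--Schwarz inequality $\sum_k\langle\delta_k,u_k\rangle\le\sqrt{\sum_k\|\delta_k\|^2 w_k}\sqrt{\sum_k\|u_k\|^2/w_k}$ and take the expectation with H\"older, $\ebb[\sqrt{XY}]\le\sqrt{\ebb X}\sqrt{\ebb Y}$. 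The first factor gives $\sqrt{\sigma^2\sum 1/b_k}$ via \eqref{eq:var-bd} when $w_k\equiv1$. The second factor gives $\sqrt{2(\eta^2+D^2)\,\ebb\log(S_{t+1}/\gamma)}$. Jensen then moves the expectation inside the logarithm, since $\log$ is concave, which yields the claimed $\log\ebb[S_{t+1}/\gamma]$.

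The main obstacle is reconciling the two steps. After subtracting the martingale part, the remainder $\|x_{k+1}-\hat x_{k+1}\|$ is not obviously bounded by the quantity $\|x_{k+1}-x_k\|$ that the logarithm controls, while Cauchy--Schwarz applied directly to $\langle\delta_k,x_{k+1}-x_k\rangle$ loses the zero-mean cancellation. I would first try to avoid the subtraction altogether. With $w_k\equiv1$, Cauchy--Schwarz directly gives
\[
\sum_k\|x_{k+1}-x_k\|^2\le\eta^2\sum_k\|\widetilde\gcal_k\|^2/S_k^2\le(\eta^2+D^2)\log(S_{t+1}^2/\gamma^2)=2(\eta^2+D^2)\log(S_{t+1}/\gamma),
\]
which matches the stated constant exactly. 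This suggests the intended proof is just pathwise Cauchy--Schwarz, then \eqref{eq:log-bound}, then H\"older with $\ebb\sum\|\delta_k\|^2\le\sigma^2\sum 1/b_k$, then Jensen. I will therefore follow this route, and note that the martingale decomposition is not needed for the stated bound.
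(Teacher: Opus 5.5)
Your final route is correct and is essentially the paper's proof: pathwise Cauchy--Schwarz, then \eqref{eq:log-bound} with $a_k=\|\widetilde{\gcal}_k\|^2$, $A_k=S_{k+1}^2$, $C=D^2/\eta^2$, then the expectation Cauchy--Schwarz inequality together with \eqref{eq:var-bd}, then Jensen. The only difference is the order of the Cauchy--Schwarz steps: you apply it over the sum pathwise before taking expectations, while the paper applies the expectation version term by term and then Cauchy--Schwarz over $k$; both give the identical constant, and, as you concluded, the martingale decomposition is not needed.
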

\begin{proof}[Proof of Lemma \ref{lem:stoc-ip-sum}]
Firstly, note from Cauchy-Schwarz inequality and the definition of $\widetilde{\gcal}_k$ \eqref{def:tilde-Gt} that
\[
\sum_{k=1}^t\langle\delta_k, x_{k+1}-x_k\rangle\leq\sum_{k=1}^t\|\delta_k\| \|x_{k+1}-x_k\|=\eta\sum_{k=1}^t\|\delta_k\| \frac{\|\widetilde{\gcal}_k\|}{S_k}.
\]
Taking expectations on both sides of the above inequality and applying Cauchy-Schwarz inequality for expectations $|\mathbb{E}[XY]| \leq \sqrt{\mathbb{E}[X^2] \mathbb{E}[Y^2]}$, we further obtain
\begin{equation}\label{eq:stoch-error-1}
\begin{aligned}
\ebb\Big[\sum_{k=1}^t\langle\delta_k, x_{k+1}-x_k\rangle\Big]\leq & \eta\sum_{k=1}^t\ebb\Big[\|\delta_k\| \frac{\|\widetilde{\gcal}_k\|}{S_k}\Big]\leq \eta\sum_{k=1}^t \sqrt{\ebb[\|\delta_k\|^2] \ebb\Big[\frac{\|\widetilde{\gcal}_k\|^2}{S_k^2}\Big]}\\
\leq& \sigma \eta\sum_{k=1}^t \frac{1}{\sqrt{b_k}}\sqrt{\ebb\Big[\frac{\|\widetilde{\gcal}_k\|^2}{S_k^2}\Big]}\leq \sigma \eta\sqrt{\sum_{k=1}^t \frac{1}{b_k}} \sqrt{\ebb\Big[\sum_{k=1}^t \frac{\|\widetilde{\gcal}_k\|^2}{S_k^2}\Big]},
\end{aligned}
\end{equation}
where we used \eqref{eq:var-bd} to obtain the third inequality and the last one follows from Cauchy-Schwarz inequality. Applying \eqref{eq:log-bound} in Lemma \ref{lem:summation} with $a_k:=\|\widetilde{\gcal}_k\|^2$, $A_k:=S_{k+1}^2$, and $C:=D^2/\eta^2$ to bound $\sum_{k=1}^t \frac{\|\widetilde{\gcal}_k\|^2}{S_k^2}$, we derive from \eqref{eq:stoch-error-1} that
\begin{align*}
     \ebb\Big[\sum_{k=1}^t\langle\delta_k, x_{k+1}-x_k\rangle\Big] \leq & \sigma\eta \sqrt{\sum_{k=1}^t \frac{1}{b_k}} \sqrt{\Big(1+\frac{D^2}{\eta^2}\Big) \ebb\Big[\log \frac{S_{t+1}^2}{\gamma^2}\Big]}\\
     \leq & \sigma \sqrt{2(\eta^2+D^2) \sum_{k=1}^t \frac{1}{b_k}} \sqrt{\log \ebb\bigg[\frac{S_{t+1}}{\gamma}\bigg]},
\end{align*}
where the last inequality follows from Jensen's inequality applied to the concave function $\log(\cdot)$. The proof is now complete.
\end{proof}

Leveraging Lemma \ref{lem:stoc-ip-sum}, we can establish high-probability convergence results to stationary points for Algorithm \ref{Alg:adaprox}, as summarized in the upcoming theorem. For ease of presentation, we use $\lesssim$ to hide constants depending only on fixed quantities (e.g., $L, \gamma, \eta, D$, and $\Delta_1$) hereafter, i.e., $A\lesssim B$ if there exists a constant $c>0$ such that $A\leq c B$.
\begin{theorem}
\label{thm:nc-stoc}
Suppose $f$ is $L$-smooth and Assumptions \ref{ass:seq-bd} and \ref{ass:bd-stoch} hold. Let $\{x_k\}_{k\ge 1}$ be the iterates generated by Algorithm \ref{Alg:adaprox} with batch sizes $\{b_k\}_{k\ge 1}$ and any $\gamma, \eta>0$. Let $\epsilon\in(0, 1)$. Then for every  $t\ge 1$, with probability at least $1-\epsilon$, we have
\[
\frac{1}{t}\sum_{k=1}^t\|\gcal_k\|^2 \lesssim \frac{1}{t\epsilon^2}\Bigg[\Bigg(1+\sigma^2\sum_{k=1}^t\frac{1}{b_k}\Bigg) \Bigg(1+\log^2 \Bigg(1+\sigma\sqrt{\sum_{k=1}^t\frac{1}{b_k}}\Bigg)\Bigg)\Bigg].
\]
\end{theorem}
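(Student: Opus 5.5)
Our strategy is to bound $\ebb[S_{t+1}]$ by a quantity of order $\sqrt{1+\sigma^2\sum_k 1/b_k}$ times log factors. We then convert this into a high-probability bound on $\sum_k\|\widetilde{\gcal}_k\|^2$ through Markov's inequality, since $S_{t+1}^2=\gamma^2+\sum_{k\le t}\|\widetilde{\gcal}_k\|^2$. Finally we pass from $\widetilde{\gcal}_k$ to $\gcal_k$ using nonexpansiveness \eqref{eq:prox-nonexpansive}, which gives $\|\gcal_k-\widetilde{\gcal}_k\|\le\|\delta_k\|$. The square of the Markov bound produces the $1/\epsilon^2$ factor.

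\textbf{Step 1 (descent with noise).} Lemma~\ref{lem:F-descent} gives
\[
\eta_k\|\widetilde{\gcal}_k\|^2\le \Delta_k-\Delta_{k+1}+\langle\delta_k,x_{k+1}-x_k\rangle+\tfrac{L}{2}\eta_k^2\|\widetilde{\gcal}_k\|^2 .
\]
In the deterministic proof we divided by $\eta_k$, and that step created the problematic coupling. Here we avoid dividing and sum directly. The left side becomes $\eta\sum_k\|\widetilde{\gcal}_k\|^2/S_k$, which by \eqref{eq:sqrt-bound} is at least $2\eta(S_{t+1}-\gamma)$. On the right side, the telescoping part is at most $\Delta_1$ because $\Delta_{t+1}\ge0$. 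The term $\frac{L\eta^2}{2}\sum_k\|\widetilde{\gcal}_k\|^2/S_k^2$ is at most $\frac{L}{2}(\eta^2+D^2)\log(S_{t+1}^2/\gamma^2)$ by \eqref{eq:log-bound} with $C=D^2/\eta^2$, which Assumption~\ref{ass:seq-bd} justifies. This yields the pathwise inequality
\[
2\eta S_{t+1}\le 2\eta\gamma+\Delta_1+L(\eta^2+D^2)\log\tfrac{S_{t+1}}{\gamma}+\sum_{k=1}^t\langle\delta_k,x_{k+1}-x_k\rangle .
\]

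\textbf{Step 2 (expectation).} Take expectations and use Jensen on the concave $\log$. The noise sum is handled by Lemma~\ref{lem:stoc-ip-sum}. Writing $X:=\ebb[S_{t+1}/\gamma]\ge1$, we get
\[
X\lesssim 1+\log X+\sigma\sqrt{\textstyle\sum_k 1/b_k}\,\sqrt{\log X}.
\]
Since $\log X\le \sqrt{X}\sqrt{\log X}$ or can be absorbed separately, we first remove the $\log X$ term with Lemma~\ref{lem:log}-type absorption ($\log X\le X/(2c)+\dots$). We then apply Lemma~\ref{lem:sqrt-log} with $a\simeq 1+\sigma\sqrt{\sum_k1/b_k}$. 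This gives
\[
\ebb[S_{t+1}]\lesssim \Bigl(1+\sigma\sqrt{\textstyle\sum_k1/b_k}\Bigr)\Bigl(1+\sqrt{\log(1+\sigma^2\textstyle\sum_k1/b_k)}\Bigr).
\]
Getting the constants to land exactly as in the statement is the bookkeeping part of the proof.

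\textbf{Step 3 (high probability).} By Markov, with probability at least $1-\epsilon/2$ we have $S_{t+1}\le 2\ebb[S_{t+1}]/\epsilon$. Hence $\sum_k\|\widetilde{\gcal}_k\|^2\le 4\ebb[S_{t+1}]^2/\epsilon^2$, which is of order $\epsilon^{-2}(1+\sigma^2\sum_k 1/b_k)(1+\log(\cdot))$. Separately, $\sum_k\|\gcal_k\|^2\le 2\sum_k\|\widetilde{\gcal}_k\|^2+2\sum_k\|\delta_k\|^2$. Since $\ebb\sum_k\|\delta_k\|^2\le\sigma^2\sum_k1/b_k$ by \eqref{eq:var-bd}, Markov gives $\sum_k\|\delta_k\|^2\le 2\sigma^2\sum_k1/b_k/\epsilon$ with probability at least $1-\epsilon/2$, and this is dominated by the claimed bound because $\epsilon<1$. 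A union bound then gives the result. The $\log^2$ in the statement is looser than our $\log$, so it is harmless.

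We expect the main obstacle to be Step 1. The noise inner product has to enter additively and not multiplied by the random $1/\eta_k$, which is why we work with $\eta_k\|\widetilde{\gcal}_k\|^2$ and the lower bound \eqref{eq:sqrt-bound}. We also have to verify that this lower bound needs only $a_k\ge0$ and not the ratio condition.
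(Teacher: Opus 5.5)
Your proposal is correct and follows the paper's proof essentially step for step. It uses the same pathwise inequality built from Lemma~\ref{lem:F-descent}, \eqref{eq:log-bound} and the lower half of \eqref{eq:sqrt-bound}; the same combination of Lemma~\ref{lem:stoc-ip-sum} and Jensen to bound $\ebb[S_{t+1}]$; nonexpansiveness to pass from $\widetilde{\gcal}_k$ to $\gcal_k$; and Markov's inequality to get the $1/\epsilon^2$ high-probability bound.

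The differences are only bookkeeping. You solve the implicit inequality by first absorbing the $\log X$ term and then invoking Lemma~\ref{lem:sqrt-log}, where the paper uses $\sqrt{x}\le\frac12(1+x)$ and Lemma~\ref{lem:log}. Your route gives the slightly sharper $\ebb[S_{t+1}]\lesssim(1+\sigma\sqrt{\sum_k 1/b_k})(1+\sqrt{\log(1+\sigma^2\sum_k 1/b_k)})$, and hence a single rather than squared log in the final bound. You also apply Markov separately to $S_{t+1}$ and to $\sum_k\|\delta_k\|^2$ with a union bound. The paper instead applies it once to $\sqrt{\sum_k\|\gcal_k\|^2}$, after bounding its expectation by $\sqrt{2}\,\ebb[S_{t+1}]+\sigma\sqrt{2\sum_k 1/b_k}$.
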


\begin{proof}[Proof of Theorem \ref{thm:nc-stoc}]
We start by applying Lemma \ref{lem:F-descent} and $\|x_{k+1}-x_k\|=\eta_k\|\widetilde{\gcal}_k\|$ to obtain
\begin{equation}\label{eq:stoch-F-descent}
F(x_{k+1}) \leq F(x_k)+\langle \delta_k, x_{k+1}-x_k\rangle-\eta_k\|\widetilde{\gcal}_k\|^2+\frac{L\eta_k^2}{2}\|\widetilde{\gcal}_k\|^2.
\end{equation}
Subtracting both sides by $F^*$ in \eqref{eq:stoch-F-descent} and rearranging the terms, we have
\begin{equation}\label{eq:stoch-Gk-bound}
\eta_k\|\widetilde{\gcal}_k\|^2 \leq \Delta_k-\Delta_{k+1}+\frac{L\eta_k^2}{2}\|\widetilde{\gcal}_k\|^2+\langle\delta_k, x_{k+1}-x_k\rangle.
\end{equation}
Summing \eqref{eq:stoch-Gk-bound} over $k=1,\ldots,t$ yields
\begin{align}
\eta \sum_{k=1}^t \frac{\|\widetilde{\gcal}_k\|^2}{S_k} \leq& \Delta_1-\Delta_{t+1}+\frac{L \eta^2}{2} \sum_{k=1}^t \frac{\|\widetilde{\gcal}_k\|^2}{S_k^2}+ \sum_{k=1}^t\langle\delta_k, x_{k+1}-x_k\rangle\notag\\
\leq & \Delta_1+\frac{L}{2}(\eta^2+ D^2) \log \frac{S_{t+1}^2}{\gamma^2}+\sum_{k=1}^t\langle\delta_k, x_{k+1}-x_k\rangle\label{eq:stoch-sum-Gk},
\end{align}
where the second inequality follows by applying \eqref{eq:log-bound} in Lemma \ref{lem:summation} with $a_k:=\|\widetilde{\gcal}_k\|^2$, $A_k:=S_{k+1}^2$, and $C:=D^2/\eta^2$.

Next, we bound $\ebb[S_{t+1}]$. Applying \eqref{eq:sqrt-bound} in Lemma \ref{lem:summation} on the LHS of \eqref{eq:stoch-sum-Gk} gives
\begin{equation}\label{eq:stoch-S-bound}
2 \eta (S_{t+1}-\gamma) \leq \eta \sum_{k=1}^t \frac{\|\widetilde{\gcal}_k\|^2}{S_k}\leq \Delta_1+L(\eta^2+D^2) \log \frac{S_{t+1}}{\gamma}+ \sum_{k=1}^t\langle\delta_k, x_{k+1}-x_k\rangle.
\end{equation}
Taking expectation on both sides of \eqref{eq:stoch-S-bound} and applying Lemma \ref{lem:stoc-ip-sum}, we derive after rearranging the terms that
\begin{equation}\label{eq:stoch-S-bound-2}
\begin{aligned}
\ebb\bigg[\frac{S_{t+1}}{\gamma}\bigg] &\leq 1+\frac{\Delta_1}{2 \gamma \eta}+\frac{L}{2 \gamma \eta}(\eta^2+D^2) \ebb\bigg[\log \frac{S_{t+1}}{\gamma}\bigg]+\frac{\sigma}{2 \gamma\eta} \sqrt{2(\eta^2+D^2) \sum_{k=1}^t \frac{1}{b_k}} \sqrt{\log \ebb\bigg[\frac{S_{t+1}}{\gamma}\bigg]} \\
& \leq 1+\frac{\Delta_1}{2 \gamma \eta}+\frac{L}{2 \gamma \eta}(\eta^2+D^2) \log \ebb\bigg[\frac{S_{t+1}}{\gamma}\bigg]+\frac{\sigma}{4 \gamma\eta} \sqrt{2(\eta^2+D^2) \sum_{k=1}^t \frac{1}{b_k}} \bigg(1+\log \ebb\bigg[\frac{S_{t+1}}{\gamma}\bigg]\bigg)\\
& = 1+\frac{\Delta_1}{2 \gamma \eta}+\frac{\sigma}{4 \gamma\eta} \sqrt{2(\eta^2+D^2) \sum_{k=1}^t \frac{1}{b_k}}+\Bigg(\frac{L}{2 \gamma \eta}(\eta^2+D^2)+\frac{\sigma}{4 \gamma\eta} \sqrt{2(\eta^2+D^2) \sum_{k=1}^t \frac{1}{b_k}}\Bigg) \log \ebb\bigg[\frac{S_{t+1}}{\gamma}\bigg],
\end{aligned}
\end{equation}
where the second inequality follows from the concavity of $\log(\cdot)$ and the basic inequality $\sqrt{x}\leq \frac{1}{2}(1+ x)$.
Solving \eqref{eq:stoch-S-bound-2} via Lemma \ref{lem:log}, we conclude that
\begin{equation*}
\begin{aligned}
\ebb\bigg[\frac{S_{t+1}}{\gamma}\bigg] \leq& \Bigg(\frac{L}{\gamma \eta}(\eta^2+D^2)+\frac{\sigma}{2 \gamma\eta} \sqrt{2(\eta^2+D^2) \sum_{k=1}^t \frac{1}{b_k}}\Bigg)\log \Bigg(\frac{L}{\gamma \eta}(\eta^2+D^2)+\frac{\sigma}{2 \gamma\eta} \sqrt{2(\eta^2+D^2) \sum_{k=1}^t \frac{1}{b_k}}\Bigg)\\
& +2+\frac{\Delta_1}{\gamma \eta}-\frac{L}{\gamma \eta}(\eta^2+D^2)
\end{aligned}
\end{equation*}
which leads to the following after omitting the constants $\{\gamma, \eta, D, L, \Delta_1\}$:
\begin{equation}\label{eq:stoch-S-final}
    \ebb[S_{t+1}]\lesssim 1+\Bigg(1+\sigma\sqrt{\sum_{k=1}^t\frac{1}{b_k}}\Bigg) \log \Bigg(1+\sigma\sqrt{\sum_{k=1}^t\frac{1}{b_k}}\Bigg).
\end{equation}

From the nonexpansiveness of the proximal operator \eqref{eq:prox-nonexpansive}, we have
\begin{equation}\label{eq:stoch-G-tilde-G}
\|\gcal_k-\widetilde{\gcal}_k\|^2=\frac{1}{\eta_k^2}\big\|\p_{\eta_k h}(x_k-\eta_k g_k)-\p_{\eta_k h}\big(x_k-\eta_k \nabla f(x_k)\big)\big\|^2 \leq \|g_k-\nabla f(x_k)\|^2=\|\delta_k\|^2.
\end{equation}
Therefore, we obtain from the basic inequalities $\|u+v\|^2\leq 2\|u\|^2 + 2\|v\|^2$ and $\sqrt{a+b}\leq \sqrt{a}+\sqrt{b}$, together with \eqref{eq:stoch-G-tilde-G} that
\begin{equation}\label{eq:nc-stoc-1}
\begin{aligned}
\ebb\Bigg[\sqrt{\sum_{k=1}^t\|\gcal_k\|^2}\Bigg]\leq &\ebb\Bigg[\sqrt{2 \sum_{k=1}^t\|\widetilde{\gcal}_k\|^2+2 \sum_{k=1}^t\|\gcal_k-\widetilde{\gcal}_k\|^2}\Bigg] \leq \sqrt{2} \ebb\Bigg[\sqrt{\sum_{k=1}^t\|\widetilde{\gcal}_k\|^2}\Bigg]+\ebb\Bigg[\sqrt{2 \sum_{k=1}^t\|\delta_k\|^2}\Bigg] \\
\leq & \sqrt{2} \ebb[S_{t+1}]+\sqrt{2 \sum_{k=1}^t \ebb[\|\delta_k\|^2]} \leq \sqrt{2} \ebb[S_{t+1}]+\sigma\sqrt{2\sum_{k=1}^{t}\frac{1}{b_k}},
\end{aligned}
\end{equation}
where we used \eqref{eq:Sk-tilde} and the concavity of $\sqrt{\cdot}$ to derive the third inequality, and the last one follows from \eqref{eq:var-bd}. 

Finally, we establish the high-probability bound through Markov's inequality $\mathbb{P}(X \geq a) \leq \ebb[X]/a$ for any nonnegative random variable $X$ and $a>0$, which is equivalent to 
\[
\mathbb{P}\big(X^2 \leq a^2\big) \geq 1-\ebb[X]/a.
\]
Given an $\epsilon\in (0, 1)$, then it follows by setting $a:=\ebb[X]/\epsilon$ in the above inequality that 
\begin{equation}\label{eq:markov}
\mathbb{P}\big(X^2 \leq (\ebb[X])^2/\epsilon^2\big)\ge 1-\epsilon.
\end{equation}
Let $X:=\sqrt{\sum_{k=1}^t\|\gcal_k\|^2}$ in \eqref{eq:markov}, then it follows from \eqref{eq:nc-stoc-1} that with probability at least $1- \epsilon$, we have
\begin{align*}
\sum_{k=1}^t\|\gcal_k\|^2 \leq&\frac{\Big(\ebb\Big[\sqrt{\sum_{k=1}^t\|\gcal_k\|^2}\Big]\Big)^2}{\epsilon^2}\leq \frac{4\big(\ebb\big[S_{t+1}\big]\big)^2+4 \sigma^2 \sum_{k=1}^t \frac{1}{b_k}}{\epsilon^2}\\
\lesssim& \frac{1}{\epsilon^2}\Bigg[1+\log^2 \Bigg(1+\sigma\sqrt{\sum_{k=1}^t\frac{1}{b_k}}\Bigg)+\sigma^2\sum_{k=1}^t\frac{1}{b_k} \Bigg(1+\log^2 \Bigg(1+\sigma\sqrt{\sum_{k=1}^t\frac{1}{b_k}}\Bigg)\Bigg)\Bigg],
\end{align*}
where we used \eqref{eq:stoch-S-final} and the basic inequality $(a+b)^2\leq 2a^2+2b^2$ to obtain the last inequality. The proof is thus complete.
\end{proof}

\begin{remark}
    With unit batch size $b_k\equiv 1$, the bound in Theorem~\ref{thm:nc-stoc} reduces to $\widetilde{O}\big(\frac{1}{t}+\sigma^2\big)$, which only ensures convergence to a neighborhood of a stationary point. This marks a key difference from AdaGrad analysis in the non-composite setting, where unit batch size suffices for convergence to a stationary point \citep{attia2023sgd,faw2022power,wang2023convergence,ward2020adagrad}. Such behavior is not an artifact of our analysis, as it matches the behavior of PG in the stochastic nonconvex setting \citep{ghadimi2016mini}, whose convergence bound also contains a residual term proportional to $\frac{\sigma^2}{t}\sum_{k=1}^t\frac{1}{b_k}$. To ensure convergence to a stationary point, one may take increasing batch sizes, e.g., $b_k=\Omega(\sigma\sqrt{k})$, which yields the rate $\widetilde{O}\big(\frac{1}{t}+\frac{\sigma}{\sqrt{t}}\big)$, matching the result of \citet{ghadimi2016mini}.
\end{remark}

\section{Convergence Analysis in Convex Setting}
\label{sec:cvx}
In this section, we study the convergence of Algorithm \ref{Alg:adaprox} under the condition that $f$ is convex (possibly nonsmooth). We show in the upcoming two subsections that, for any choice of initial point and hyperparameters, Algorithm \ref{Alg:adaprox} achieves convergence rates matching those of PG for both Lipschitz continuous and smooth $f$, respectively. 

For the case of Lipschitz continuous $f$, we do not require $f$ to be differentiable. With slight abuse of notation, we use $\nabla f$ to denote a subgradient of $f$. For brevity, we focus on stochastic settings, while results in the deterministic case follow directly by setting the variance $\sigma^2=0$. Moreover, we fix the batch size $b_k\equiv 1$ throughout this section (i.e., $g_k=\nabla \ell(x_k;\xi_k)$), and the results extend to general mini-batch versions through \eqref{eq:var-bd}. Then it follows directly from Assumption \ref{ass:bd-stoch} that
\[
\ebb_k[g_k]=\nabla f(x_k)\in\partial f(x_k),\quad \ebb_k[\|\delta_k\|^2] = \ebb_k[\|g_k-\nabla f(x_k)\|^2] \leq \sigma^2.
\]
The following lemma provides descent properties for PG-type algorithms under the convexity of $f$, which serve as starting points for our analysis in the upcoming subsections. Recall that $\Delta_k:=F(x_k)-F^*$.

\begin{lemma}\label{lem:F-descent-cvx}
Suppose $f$ in \eqref{main-prob} is convex. Let $\{x_k\}_{k\ge 1}$ be the sequence generated by the iterative scheme \eqref{Alg:prox-grad}. Then for any $k\ge 1$, we have the following results:
\begin{itemize}
    \item[(i).] If $f$ is $G$-Lipschitz continuous, then
    \begin{equation}\label{eq:descent-lip}
    \begin{aligned}
    \Delta_{k+1} \leq & \frac{1}{2 \eta_k}\big(\|x_k-x^*\|^2-\|x_{k+1}-x^*\|^2\big)-\frac{1}{2 \eta_k}\|x_{k+1}-x_k\|^2+2G \|x_k-x_{k+1}\|\\
    & +\langle\delta_k, x_k-x^*\rangle+\langle\delta_k, x_{k+1}-x_k\rangle.
    \end{aligned}
    \end{equation}
    \item[(ii).] If $f$ is $L$-smooth, then
    \begin{equation}\label{eq:descent-smooth}
    \begin{aligned}
    \Delta_{k+1} \leq & \frac{1}{2 \eta_k}\big(\|x_k-x^*\|^2-\|x_{k+1}-x^*\|^2\big)+\frac{1}{2}\Big(L-\frac{1}{\eta_k}\Big)\|x_{k+1}-x_k\|^2\\
    & +\langle\delta_k, x_k-x^*\rangle+\langle\delta_k, x_{k+1}-x_k\rangle.
    \end{aligned}
    \end{equation}
\end{itemize}
\end{lemma}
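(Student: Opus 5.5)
The plan is to start from the optimality condition of the proximal step, which already gave \eqref{eq:convex-h} in the proof of Lemma~\ref{lem:F-descent}, and combine it with convexity of $f$ at $x_k$ tested against $x^*$. Setting $x=x^*$ in \eqref{eq:convex-h} gives
\[
h(x_{k+1})\le h(x^*)+\langle g_k, x^*-x_{k+1}\rangle+\frac{1}{\eta_k}\langle x_{k+1}-x_k, x^*-x_{k+1}\rangle .
\]
We then rewrite the last inner product with the three-point identity
\[
2\langle x_{k+1}-x_k, x^*-x_{k+1}\rangle=\|x_k-x^*\|^2-\|x_{k+1}-x^*\|^2-\|x_{k+1}-x_k\|^2,
\]
which produces the telescoping term $\frac{1}{2\eta_k}(\|x_k-x^*\|^2-\|x_{k+1}-x^*\|^2)$ and the negative term $-\frac{1}{2\eta_k}\|x_{k+1}-x_k\|^2$ in both \eqref{eq:descent-lip} and \eqref{eq:descent-smooth}.

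Next we handle $f(x_{k+1})-f(x^*)$. Split it as $f(x_{k+1})-f(x_k)+f(x_k)-f(x^*)$. By convexity with the (sub)gradient $\nabla f(x_k)$, we have $f(x_k)-f(x^*)\le\langle\nabla f(x_k),x_k-x^*\rangle$. Writing $\nabla f(x_k)=g_k+\delta_k$, this becomes $\langle g_k,x_k-x^*\rangle+\langle\delta_k,x_k-x^*\rangle$. Adding the $h$-inequality, the $g_k$ terms combine to $\langle g_k,x_k-x_{k+1}\rangle$. This equals $\langle\nabla f(x_k),x_k-x_{k+1}\rangle+\langle\delta_k,x_{k+1}-x_k\rangle$, which yields the two noise terms exactly as stated. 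What remains to control is
\[
f(x_{k+1})-f(x_k)+\langle\nabla f(x_k),x_k-x_{k+1}\rangle .
\]

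The only step that differs between the two cases is how this residual is bounded.
\begin{itemize}
\item In case (ii), $L$-smoothness bounds the residual by $\frac{L}{2}\|x_{k+1}-x_k\|^2$. Merging this with $-\frac{1}{2\eta_k}\|x_{k+1}-x_k\|^2$ gives $\frac12(L-\frac1{\eta_k})\|x_{k+1}-x_k\|^2$.
\item In case (i), $G$-Lipschitz continuity gives $|f(x_{k+1})-f(x_k)|\le G\|x_{k+1}-x_k\|$. Any subgradient of a $G$-Lipschitz convex function satisfies $\|\nabla f(x_k)\|\le G$, so the inner product is at most $G\|x_k-x_{k+1}\|$. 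The residual is therefore at most $2G\|x_k-x_{k+1}\|$, and $-\frac1{2\eta_k}\|x_{k+1}-x_k\|^2$ is kept as is.
\end{itemize}

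Subtracting $F^*=F(x^*)$ then gives $\Delta_{k+1}$ on the left in both cases. No step is a real obstacle. The only point needing care is the sign bookkeeping when splitting $\langle g_k,\cdot\rangle$ into $\nabla f(x_k)$ and $\delta_k$, so that the noise terms appear as $+\langle\delta_k,x_k-x^*\rangle+\langle\delta_k,x_{k+1}-x_k\rangle$. In the nonsmooth case one should also note that the subgradient bound $\|\nabla f(x_k)\|\le G$ follows from Lipschitz continuity.
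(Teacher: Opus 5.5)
Your proposal is correct and follows essentially the same route as the paper. Both use the prox optimality condition at $x^*$ with the three-point identity, convexity of $f$ at $x_k$ against $x^*$, the split $\nabla f(x_k)=g_k+\delta_k$, and a smoothness or Lipschitz bound on the linearization residual. The differences are cosmetic: you bound $F(x_{k+1})-F(x^*)$ directly rather than starting from $\Delta_k$ and cancelling it. In case (i) you get $2G\|x_k-x_{k+1}\|$ from Lipschitz continuity plus $\|\nabla f(x_k)\|\le G$ (exactly \eqref{eq:Lip-f}), whereas the paper uses convexity at $x_{k+1}$ together with $\|\nabla f(x_k)-\nabla f(x_{k+1})\|\le 2G$.
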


\begin{proof}[Proof of Lemma \ref{lem:F-descent-cvx}]
We start by using the convexity of $f$ to obtain
\begin{equation}\label{eq:convex-descent-1}
\begin{aligned}
\Delta_k &= f(x_k)-f(x^*)+h(x_k)-h(x^*)\\
&\leq\langle \nabla f(x_k), x_k-x^*\rangle+h(x_k)-h(x^*) \\
& =\langle\nabla f(x_k), x_k-x_{k+1}\rangle+\langle g_k, x_{k+1}-x^*\rangle+\langle\delta_k, x_{k+1}-x^*\rangle+h(x_k)-h(x^*).
\end{aligned}
\end{equation}

We first prove the Lipschitz continuous case (i). To this end, we bound the first term on the RHS of \eqref{eq:convex-descent-1} via the convexity of $f$ and $\|\nabla f(x_{k+1})\|\leq G$ as follows 
\begin{equation}\label{eq:stoch-fk-convex}
\begin{aligned}
\langle\nabla f(x_k), x_k-x_{k+1}\rangle & =\langle \nabla f(x_{k+1}), x_k-x_{k+1}\rangle+\langle\nabla f(x_k)-\nabla f(x_{k+1}), x_k-x_{k+1}\rangle \\
& \leq f(x_k)-f(x_{k+1})+\|\nabla f(x_k)-\nabla f(x_{k+1})\| \|x_k-x_{k+1}\|\\
& \leq f(x_k)-f(x_{k+1})+2G \|x_k-x_{k+1}\|.
\end{aligned}
\end{equation}
For the second term in \eqref{eq:convex-descent-1}, plugging $x=x^*$ in \eqref{eq:convex-h} and rearranging the terms yields
\begin{equation}\label{eq:convex-trick}
\begin{aligned}
\langle g_k, x_{k+1}-x^*\rangle &\leq \frac{1}{\eta_k}\langle x_k-x_{k+1}, x_{k+1}-x^*\rangle+h(x^*)-h(x_{k+1}) \\
& =\frac{1}{2\eta_k}\big(\|x_k-x^*\|^2-\|x_{k+1}-x^*\|^2-\|x_{k+1}-x_k\|^2\big)+h(x^*)-h(x_{k+1}),
\end{aligned}
\end{equation}
where we used the identity $2\langle a-b, b-c\rangle = \|a-c\|^2-\|a-b\|^2-\|b-c\|^2$ for the equality.
To bound the third term in \eqref{eq:convex-descent-1}, we decompose
\begin{equation}\label{eq:stoch-delta-decompose}
\langle\delta_k, x_{k+1}-x^*\rangle =\langle\delta_k, x_k-x^*\rangle+\langle\delta_k, x_{k+1}-x_k\rangle.
\end{equation}
Applying \eqref{eq:stoch-fk-convex}, \eqref{eq:convex-trick}, and \eqref{eq:stoch-delta-decompose} on \eqref{eq:convex-descent-1}, we obtain that
\begin{equation*}
\begin{aligned}
\Delta_{k} \leq & \frac{1}{2 \eta_k}\big(\|x_k-x^*\|^2-\|x_{k+1}-x^*\|^2\big)-\frac{1}{2 \eta_k}\|x_{k+1}-x_k\|^2+2G \|x_k-x_{k+1}\|\\
& +\langle\delta_k, x_k-x^*\rangle+\langle\delta_k, x_{k+1}-x_k\rangle+\Delta_{k} -\Delta_{k+1} ,
\end{aligned}
\end{equation*}
which simplifies to \eqref{eq:descent-lip}.

We proceed to prove (ii) for $L$-smooth $f$. In this case, we can obtain a tighter bound for the first term on the RHS of \eqref{eq:convex-descent-1} using
\begin{equation}\label{eq:descent-cvx-term1}
\langle\nabla f(x_k), x_k-x_{k+1}\rangle \leq f(x_k)-f(x_{k+1})+ \frac{L}{2}\|x_k-x_{k+1}\|^2.
\end{equation}
Then \eqref{eq:descent-smooth} follows after plugging \eqref{eq:descent-cvx-term1}, \eqref{eq:convex-trick}, and \eqref{eq:stoch-delta-decompose} on \eqref{eq:convex-descent-1}. The proof is now complete.
\end{proof}

\subsection{Lipschitz Continuous Case}
We establish the following convergence result for the averaged iterate of Algorithm~\ref{Alg:adaprox} when $f$ is convex and $G$-Lipschitz continuous.
\begin{theorem}
\label{thm:stoch-convex-nonsmooth}
Suppose $f$ is convex and $G$-Lipschitz continuous, and Assumptions \ref{ass:seq-bd} and \ref{ass:bd-stoch} hold. Let $\{x_k\}_{k\ge 1}$ be the iterates generated by Algorithm \ref{Alg:adaprox} with $b_k\equiv1$ and any $\gamma, \eta>0$. Denote $\bar{x}_{t}:=\frac{1}{t}\sum_{k=1}^{t} x_{k+1}$. Then for any $t \ge 1$, it holds that
\[
\ebb[F(\bar{x}_{t})-F^*] \lesssim \frac{(1+\sigma)\sqrt{\log \big((1+\sigma^2)t\big)}}{\sqrt{t}}.
\]
\end{theorem}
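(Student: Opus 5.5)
Sum the descent inequality \eqref{eq:descent-lip} of Lemma~\ref{lem:F-descent-cvx}(i) over $k=1,\dots,t$, take expectations, and use Jensen's inequality $F(\bar x_t)\le \frac1t\sum_{k=1}^t F(x_{k+1})$. This reduces the statement to bounding $\ebb[\sum_{k=1}^t\Delta_{k+1}]$ by $\widetilde O((1+\sigma)\sqrt t)$. We handle the four groups of terms on the right-hand side of \eqref{eq:descent-lip} separately.

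\textbf{Distance telescoping.} Since $1/\eta_k=S_k/\eta$ is nondecreasing and $\|x_k-x^*\|\le D$ by Assumption~\ref{ass:seq-bd}, the standard Abel summation gives
\[
\sum_{k=1}^t \frac{1}{2\eta_k}\big(\|x_k-x^*\|^2-\|x_{k+1}-x^*\|^2\big)\le \frac{D^2}{2\eta_t}=\frac{D^2 S_t}{2\eta}.
\]

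\textbf{Noise terms.} The martingale term $\ebb[\langle\delta_k,x_k-x^*\rangle]$ vanishes because $x_k$ is $\mathcal F_{k-1}$-measurable and $\ebb_k[\delta_k]=0$. For the remaining noise term, Lemma~\ref{lem:stoc-ip-sum} with $b_k\equiv1$ gives
\[
\ebb\Big[\sum_{k=1}^t\langle\delta_k,x_{k+1}-x_k\rangle\Big]\le \sigma\sqrt{2(\eta^2+D^2)t}\,\sqrt{\log\ebb[S_{t+1}/\gamma]}.
\]

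\textbf{Step-dependent terms.} Write $\|x_{k+1}-x_k\|=\eta\|\widetilde\gcal_k\|/S_k$. Then
\[
2G\|x_k-x_{k+1}\|-\frac{1}{2\eta_k}\|x_{k+1}-x_k\|^2\le 2G\eta\frac{\|\widetilde\gcal_k\|}{S_k}.
\]
Drop the negative term and use Cauchy--Schwarz together with \eqref{eq:log-bound}:
\[
\sum_k \frac{\|\widetilde\gcal_k\|}{S_k}\le \sqrt t\,\sqrt{\sum_k\|\widetilde\gcal_k\|^2/S_k^2}\le \sqrt{t(1+D^2/\eta^2)\log(S_{t+1}^2/\gamma^2)}.
\]
After taking expectations and applying Jensen to the concave map $\log$, this is $\lesssim \sqrt{t\log\ebb[S_{t+1}]}$.

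Combining these bounds,
\[
\ebb\Big[\sum_{k=1}^t\Delta_{k+1}\Big]\lesssim \ebb[S_t]+(1+\sigma)\sqrt{t\log\ebb[S_{t+1}/\gamma]},
\]
so it remains to bound $\ebb[S_{t+1}]$.

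\textbf{Main obstacle: bounding $\ebb[S_{t+1}]$.} The deterministic smooth-case trick (self-bounding via descent) is unavailable here, since there is no descent for nonsmooth $f$. The nonexpansiveness/optimality route gives $\|\widetilde\gcal_k\|\le\|g_k\|+\|\partial h\|$, which is not bounded in general. I would instead use a direct crude bound:
\[
\|\widetilde\gcal_k\|^2=\frac{S_k^2}{\eta^2}\|x_{k+1}-x_k\|^2,
\]
and $S_{k+1}^2=S_k^2(1+\|x_{k+1}-x_k\|^2/\eta^2)$, which alone gives only geometric growth. A better route is to compare with the exact mapping via \eqref{eq:stoch-G-tilde-G}, giving $\|\widetilde\gcal_k\|^2\le 2\|\gcal_k\|^2+2\|\delta_k\|^2$. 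The remaining task is then to show that $\|\gcal_k\|$ is bounded in terms of $G$. I would attempt this through the optimality condition of the prox step: $\gcal_k-\nabla f(x_k)\in\partial h(x_{k+1})$, and monotonicity of $\partial h$ paired with the direction $x_{k+1}-x_k$ gives
\[
\|\gcal_k\|^2\le\langle\nabla f(x_k)+u_k,\gcal_k\rangle
\]
for $u_k\in\partial h(x_k)$. A minimal-norm subgradient of $h$ may still be unbounded, so this step is the crux. If it fails, fall back on using the $S_t$-dependent bound self-consistently. Specifically, rewrite the summed descent inequality keeping the negative term $-\frac{S_k}{2\eta}\|x_{k+1}-x_k\|^2=-\frac{\eta}{2}\|\widetilde\gcal_k\|^2/S_k$, whose sum is at least $\eta(S_{t+1}-\gamma)$ by \eqref{eq:sqrt-bound}. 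Moving it to the left-hand side with $\Delta_{k+1}\ge0$ yields an inequality of the form
\[
\ebb[S_{t+1}]\lesssim \ebb[S_t]D^2/\eta^2+(G+\sigma)\sqrt{t\log\ebb[S_{t+1}]}+\cdots.
\]
The $D^2S_t$ term competes with this, so I would split the distance term using $\gamma$ or an $\eta$-dependent constant. Then solve the resulting inequality with Lemma~\ref{lem:sqrt-log}, obtaining
\[
\ebb[S_{t+1}]\lesssim (1+\sigma)\sqrt{t\log((1+\sigma^2)t)}.
\]
Plugging this back into the combined bound and dividing by $t$ gives the claimed rate. Handling the competition between $D^2S_t/(2\eta)$ and the $\eta(S_{t+1}-\gamma)$ gain is the step I expect to be hardest.
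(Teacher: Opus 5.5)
There is a genuine gap: the step you flag as the crux, bounding $\ebb[S_{t+1}]$, is not closed by either of your routes. Your bound on $\ebb[\sum_{k=1}^t\Delta_{k+1}]$ in terms of $\ebb[S_{t+1}]$ is exactly the second half of the paper's proof, so only this step is missing.

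Bounding $\|\gcal_k\|$ by a multiple of $G$ fails for general $h$; Assumption~\ref{ass:seq-bd} only gives $\|\widetilde{\gcal}_k\|\le DS_k/\eta$. Your fallback keeps $-\frac{1}{2\eta_k}\|x_{k+1}-x_k\|^2$ in the \emph{distance-based} inequality \eqref{eq:descent-lip}, and produces
\[
\eta(S_{t+1}-\gamma)\le \frac{D^2}{2\eta}S_t+2G\eta\sum_{k=1}^t\frac{\|\widetilde{\gcal}_k\|}{S_k}+\sum_{k=1}^t\langle\delta_k,x_{k+1}-x_k\rangle+\sum_{k=1}^t\langle\delta_k,x_k-x^*\rangle .
\]
This controls $S_{t+1}$ only when $\eta-\frac{D^2}{2\eta}>0$, i.e.\ $\eta>\sqrt{2}D/2$. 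No splitting with $\gamma$ or other constants removes the restriction. The Abel-summed distance term really is of size $D^2S_t/(2\eta)$, and the only gain is $\eta(S_{t+1}-\gamma)$. This is precisely the extra condition the paper has to impose for Algorithm~\ref{Alg:acc-adaprox}. The theorem here, however, is stated for every $\eta>0$, and $D$ is unknown to the algorithm.

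The missing idea is that your premise ``there is no descent for nonsmooth $f$'' is false in the sense needed.
\begin{itemize}
\item $G$-Lipschitz continuity, which also gives $\|\nabla f(x_k)\|\le G$, yields $f(x_{k+1})\le f(x_k)+\langle\nabla f(x_k),x_{k+1}-x_k\rangle+2G\|x_{k+1}-x_k\|$; this is \eqref{eq:Lip-f}.
\item The prox optimality condition \eqref{eq:convex-h} with $x=x_k$ (not $x=x^*$) gives $h(x_{k+1})\le h(x_k)-\langle g_k,x_{k+1}-x_k\rangle-\frac{1}{\eta_k}\|x_{k+1}-x_k\|^2$.
\end{itemize}
Adding the two,
\[
\eta\frac{\|\widetilde{\gcal}_k\|^2}{S_k}\le F(x_k)-F(x_{k+1})+2G\eta\frac{\|\widetilde{\gcal}_k\|}{S_k}+\langle\delta_k,x_{k+1}-x_k\rangle .
\]
Summing now telescopes in $F$, with total at most $\Delta_1$, rather than in $\|x_k-x^*\|^2$. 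So no $D^2S_t/(2\eta)$ term appears at all. The remaining terms are handled as follows:
\begin{itemize}
\item The left side is at least $2\eta(S_{t+1}-\gamma)$ by \eqref{eq:sqrt-bound}.
\item The $G$-term is at most $2G\sqrt{2(\eta^2+D^2)t}\,\sqrt{\log(S_{t+1}/\gamma)}$ by Cauchy--Schwarz and \eqref{eq:log-bound}.
\item The noise term is handled by Lemma~\ref{lem:stoc-ip-sum}.
\end{itemize}
After taking expectations and using concavity of $\sqrt{\cdot}$ and $\log$, Lemma~\ref{lem:sqrt-log} yields $\ebb[S_{t+1}]\lesssim(1+\sigma)\sqrt{t\log((1+\sigma^2)t)}$ for every $\gamma,\eta>0$. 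The additive error $2G\|x_{k+1}-x_k\|$ is harmless because its sum grows only like $\sqrt{t\log S_{t+1}}$. With this bound in hand, your combination step completes the proof.
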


\begin{proof}[Proof of Theorem \ref{thm:stoch-convex-nonsmooth}]
We first establish a bound for $\ebb[S_{t+1}]$. To this end, note from the $G$-Lipschitz continuity of $f$ that
\begin{align*}
    f(x_{k+1}) &\leq f(x_k)+G\|x_{k+1} - x_k\|,\\
    -\langle \nabla f(x_k), x_{k+1} - x_k\rangle &\leq \|\nabla f(x_k)\|\|x_{k+1} - x_k\|\leq G\|x_{k+1} - x_k\|.
\end{align*}
Summing up the above two inequalities gives 
\begin{equation}\label{eq:Lip-f}
f(x_{k+1}) \leq f(x_k) + \langle \nabla f(x_k), x_{k+1} - x_k\rangle + 2G\|x_{k+1} - x_k\|.
\end{equation}
Combining \eqref{eq:Lip-f} and \eqref{eq:convex-h} with $x = x_k$, we obtain
\begin{equation}\label{eq:descent-F-Lip}
\begin{aligned}
&F(x_{k+1}) = f(x_{k+1}) + h(x_{k+1})\\
\leq & f(x_k)+h(x_k) -\frac{1}{\eta_k}\|x_{k+1} - x_k\|^2 + 2G\|x_{k+1} - x_k\| + \langle\delta_k, x_{k+1}-x_k\rangle\\
=& F(x_k) - \eta_k\|\widetilde{\gcal}_k\|^2 + 2G\eta_k\|\widetilde{\gcal}_k\|+\langle\delta_k, x_{k+1}-x_k\rangle,
\end{aligned}
\end{equation}
where we used $\|x_{k+1} - x_k\| = \eta_k\|\widetilde{\gcal}_k\|$ to obtain the last equality. Rearranging the terms in \eqref{eq:descent-F-Lip} and noting $\eta_k=\eta/S_k$ gives
\begin{equation}\label{eq:stoch-F-Lip-2}
\eta\frac{\|\widetilde{\gcal}_k\|^2}{S_k} \leq F(x_k) - F(x_{k+1}) + 2G\eta\frac{\|\widetilde{\gcal}_k\|}{S_k} + \langle\delta_k, x_{k+1}-x_k\rangle.
\end{equation}
Summing \eqref{eq:stoch-F-Lip-2} over $k=1,\ldots,t$ and applying \eqref{eq:sqrt-bound} in Lemma \ref{lem:summation} yields
\begin{equation}\label{eq:stoch-sum-S-ns}
\begin{aligned}
2\eta(S_{t+1}-\gamma) \leq \eta \sum_{k=1}^t \frac{\|\widetilde{\gcal}_k\|^2}{S_k} & \leq F(x_1)-F(x_{t+1})+2 G\eta \sum_{k=1}^t \frac{\|\widetilde{\gcal}_k\|}{S_k}+ \sum_{k=1}^t\langle\delta_k, x_{k+1}-x_k\rangle \\
& \leq \Delta_1+2 G\eta \sqrt{t} \sqrt{\sum_{k=1}^t \frac{\|\widetilde{\gcal}_k\|^2}{S_k^2}}+\sum_{k=1}^t\langle\delta_k, x_{k+1}-x_k\rangle,
\end{aligned}
\end{equation}
where the last inequality follows from $F(x_{t+1})\ge F^*$ and Cauchy-Schwarz inequality.
Taking expectation on both sides of \eqref{eq:stoch-sum-S-ns} and using the concavity of $\sqrt{\cdot}$, we obtain
\begin{equation}\label{eq:stoch-E-S-ns}
\begin{aligned}
\ebb\bigg[\frac{S_{t+1}}{\gamma}\bigg] & \leq 1+\frac{\Delta_1}{2 \gamma\eta}+ \frac{G \sqrt{t}}{\gamma} \ebb\Bigg[\sqrt{\sum_{k=1}^t \frac{\|\widetilde{\gcal}_k\|^2}{S_k^2}}\Bigg]+\frac{1}{2\gamma\eta}\ebb\Big[\sum_{k=1}^t\langle\delta_k, x_{k+1}-x_k\rangle\Big] \\
& \leq 1+\frac{\Delta_1}{2 \gamma\eta}+ \frac{G \sqrt{t}}{\gamma} \sqrt{\ebb\bigg[\sum_{k=1}^t \frac{\|\widetilde{\gcal}_k\|^2}{S_k^2}\bigg]}+\frac{1}{2\gamma\eta}\ebb\Big[\sum_{k=1}^t\langle\delta_k, x_{k+1}-x_k\rangle\Big].
\end{aligned}
\end{equation}
Applying \eqref{eq:log-bound} in Lemma \ref{lem:summation} and Lemma \ref{lem:stoc-ip-sum} with $b_k=1$ on \eqref{eq:stoch-E-S-ns} yields
\begin{equation*}
\begin{aligned}
\ebb\bigg[\frac{S_{t+1}}{\gamma}\bigg] & \leq 1+\frac{\Delta_1}{2 \gamma\eta}+\frac{G\sqrt{t}}{\gamma} \sqrt{\Big(1+\frac{D^2}{\eta^2}\Big) \ebb\bigg[\log \frac{S_{t+1}^2}{\gamma^2}\bigg]} +\frac{\sigma}{2\gamma\eta}  \sqrt{2(\eta^2+D^2) t} \sqrt{\log \ebb\bigg[\frac{S_{t+1}}{\gamma}\bigg]}\\
& \leq 1+\frac{\Delta_1}{2 \gamma\eta}+\frac{2G+\sigma}{2\gamma\eta} \sqrt{2(\eta^2+D^2)t} \sqrt{\log \ebb\bigg[\frac{S_{t+1}}{\gamma}\bigg]},
\end{aligned}
\end{equation*}
where the last inequality follows from the concavity of $\log(\cdot)$. 
Solving the above inequality via Lemma \ref{lem:sqrt-log}, we obtain
\begin{equation*}
\ebb\bigg[\frac{S_{t+1}}{\gamma}\bigg] \leq \frac{2 G+\sigma}{\gamma\eta}\sqrt{2(\eta^2+D^2)t} \sqrt{\log \bigg(\frac{(2 G+\sigma)^2(\eta^2+D^2) t}{2\gamma^2\eta^2}+1\bigg)}+3 +\frac{\Delta_1}{\gamma\eta},
\end{equation*}
which leads to the following after omitting the constants $\{\gamma, \eta, D, G, \Delta_1\}$:
\begin{equation}\label{eq:stoc-S-lip}
    \ebb[S_{t+1}]\lesssim 1+(1+\sigma)\sqrt{t\log \big((1+\sigma^2)t\big)}\lesssim (1+\sigma)\sqrt{t\log \big((1+\sigma^2)t\big)}.
\end{equation}

Next, we estimate $\ebb[\sum_{k=1}^t \Delta_{k+1}]$ by using the bound on $\ebb[S_{t+1}]$. Firstly, by omitting the negative term in \eqref{eq:descent-lip}, we obtain
\begin{equation}\label{eq:descent-lip-1}
\begin{aligned}
\Delta_{k+1} \leq & \frac{1}{2 \eta_k}\big(\|x_k-x^*\|^2-\|x_{k+1}-x^*\|^2\big)+2G\eta \frac{\|\widetilde{\gcal}_k\|}{S_k}+\langle\delta_k, x_k-x^*\rangle+\langle\delta_k, x_{k+1}-x_k\rangle.
\end{aligned}
\end{equation}
Summing \eqref{eq:descent-lip-1} over $k=1,\ldots,t$ and noting that
\begin{equation}\label{eq:bound-x-diff}
    \begin{aligned}
    &\sum_{k=1}^t \frac{1}{2 \eta_k}\big(\|x_k-x^*\|^2-\|x_{k+1}-x^*\|^2\big) \\
    =& \frac{\|x_1-x^*\|^2}{2\eta_1}+\sum_{k=2}^t\Big(\frac{1}{2\eta_k} - \frac{1}{2\eta_{k-1}}\Big)\|x_k-x^*\|^2 - \frac{\|x_{t+1}-x^*\|^2}{2\eta_t}\\
    \leq & \frac{D^2}{2\eta_1} + D^2\sum_{k=2}^t\Big(\frac{1}{2\eta_k} - \frac{1}{2\eta_{k-1}}\Big)=\frac{D^2}{2\eta_t},
    \end{aligned}
\end{equation}
we obtain
\begin{equation}\label{eq:stoch-sum-Delta-ns}
\sum_{k=1}^t\Delta_{k+1} \leq \frac{D^2}{2 \eta} S_t+2 G \eta \sum_{k=1}^t \frac{\|\widetilde{\gcal}_k\|}{S_k}+\sum_{k=1}^t\langle\delta_k, x_k-x^*\rangle+\sum_{k=1}^t\langle\delta_k, x_{k+1}-x_k\rangle.
\end{equation}
Taking expectation on both sides of \eqref{eq:stoch-sum-Delta-ns} and noting that $\ebb_k[\langle\delta_k, x_k-x^*\rangle]=0,~\forall k\ge 1$ due to unbiasedness of $g_k$, we have
\begin{equation}\label{eq:stoch-E-sum-Delta-ns}
\begin{aligned}
\ebb\Big[\sum_{k=1}^t \Delta_{k+1}\Big] & \leq \frac{D^2}{2 \eta} \ebb[S_t]+2 G \eta \ebb\bigg[\sum_{k=1}^t \frac{\|\widetilde{\gcal}_k\|}{S_k}\bigg]+\ebb\Big[\sum_{k=1}^t\langle\delta_k, x_{k+1}-x_k\rangle\Big] \\
& \leq \frac{D^2}{2 \eta} \ebb[S_{t+1}]+2 G \eta \sqrt{t} \sqrt{\ebb\bigg[\sum_{k=1}^t \frac{\|\widetilde{\gcal}_k\|^2}{S_k^2}}\bigg]+\ebb\Big[\sum_{k=1}^t\langle\delta_k, x_{k+1}-x_k\rangle\Big] \\
& \leq \frac{D^2}{2 \eta} \ebb[S_{t+1}]+2 G\eta \sqrt{t} \sqrt{\Big(1+\frac{D^2}{\eta^2}\Big) \ebb\bigg[\log \frac{S_{t+1}^2}{\gamma^2}\bigg]}+ \sigma  \sqrt{2(\eta^2+D^2) t} \sqrt{\log \ebb\bigg[\frac{S_{t+1}}{\gamma}\bigg]}\\
& \leq \frac{D^2}{2 \eta} \ebb[S_{t+1}]+(2 G+\sigma)\sqrt{2(\eta^2+D^2)t} \sqrt{\log \ebb\bigg[\frac{S_{t+1}}{\gamma}\bigg]},
\end{aligned}
\end{equation}
where the second inequality holds from Cauchy-Schwarz inequality and concavity of $\sqrt{\cdot}$, the third one follows from \eqref{eq:log-bound} in Lemma \ref{lem:summation} and Lemma \ref{lem:stoc-ip-sum} with $b_k\equiv 1$, and the last one holds due to concavity of $\log(\cdot)$.
Finally, applying \eqref{eq:stoc-S-lip} on \eqref{eq:stoch-E-sum-Delta-ns} and hiding fixed constants, we obtain that
\begin{align*}
\ebb\Big[\sum_{k=1}^t \Delta_{k+1}\Big] \lesssim&(1+\sigma)\sqrt{t\log \big((1+\sigma^2)t\big)}+(1+\sigma)\sqrt{t\log\Big((1+\sigma)\sqrt{t\log \big((1+\sigma^2)t\big)}\Big)}\\
\lesssim& (1+\sigma)\sqrt{t\log \big((1+\sigma^2)t\big)},
\end{align*}
where the last inequality follows due to that $\log \big((1+\sigma^2)t\big)\leq (1+\sigma^2)t-1\leq (1+\sigma)^2t$.
This together with the convexity of $F$ implies that
\[
\ebb[F(\bar{x}_{t})-F^*] \leq \frac{1}{t}\ebb\Big[\sum_{k=1}^t \Delta_{k+1}\Big] \lesssim \frac{(1+\sigma)\sqrt{\log \big((1+\sigma^2)t\big)}}{\sqrt{t}}.
\]
The proof is now complete.
\end{proof}

By setting $\sigma=0$ in the analysis of Theorem \ref{thm:stoch-convex-nonsmooth}, we can derive a convergence rate of order $O\Big(\sqrt{\frac{\log t}{t}}\Big)$ in the deterministic setting, as summarized in the following corollary. %
\begin{corollary}\label{cor:cvx-ns-deter}
Suppose $f$ is convex and $G$-Lipschitz continuous, and Assumption \ref{ass:seq-bd} hold. Let $\{x_k\}_{k\ge 1}$ be the iterates generated by Algorithm \ref{Alg:adaprox} with full gradient $g_k=\nabla f(x_k)$ and any $\gamma, \eta>0$. Denote $\bar{x}_{t}:=\frac{1}{t}\sum_{k=1}^{t} x_{k+1}$. Then for any $t \ge 1$, it holds that
\[
F(\bar{x}_{t})-F^* \lesssim \frac{\sqrt{\log t}}{\sqrt{t}}.
\]
\end{corollary}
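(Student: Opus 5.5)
The corollary is the deterministic specialization of Theorem~\ref{thm:stoch-convex-nonsmooth}. I will retrace that proof with $g_k=\nabla f(x_k)$, so that $\delta_k\equiv 0$, every expectation disappears, and $\widetilde{\gcal}_k=\gcal_k$. All the inequalities used there are pathwise before expectations are taken. Setting $\sigma=0$ therefore turns them into deterministic statements, with no need for Lemma~\ref{lem:stoc-ip-sum}.

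\textbf{Step 1: bound $S_{t+1}$.} Starting from \eqref{eq:stoch-sum-S-ns} with the noise term equal to zero, I get
\[
2\eta(S_{t+1}-\gamma)\le \Delta_1+2G\eta\sqrt{t}\sqrt{\sum_{k=1}^t \|\gcal_k\|^2/S_k^2}.
\]
I bound the last sum by \eqref{eq:log-bound} in Lemma~\ref{lem:summation}, with $a_k=\|\gcal_k\|^2$, $A_k=S_{k+1}^2$ and $C=D^2/\eta^2$; the hypothesis $a_k\le CA_{k-1}$ holds by Assumption~\ref{ass:seq-bd}. This gives
\[
S_{t+1}/\gamma\le 1+\frac{\Delta_1}{2\gamma\eta}+\frac{G}{\gamma\eta}\sqrt{2(\eta^2+D^2)t}\,\sqrt{\log(S_{t+1}/\gamma)}.
\]
Lemma~\ref{lem:sqrt-log} then yields $S_{t+1}\lesssim \sqrt{t\log t}$ for $t\ge 2$, which is just \eqref{eq:stoc-S-lip} with $\sigma=0$. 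The case $t=1$ is trivial, since there $S_2\le\gamma\sqrt{1+D^2/\eta^2}$ directly.

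\textbf{Step 2: bound the summed gaps.} I sum \eqref{eq:descent-lip} with $\delta_k=0$ and drop the negative term. The telescoping estimate \eqref{eq:bound-x-diff} uses $\|x_k-x^*\|\le D$ and the monotonicity of $\eta_k$, and gives
\[
\sum_{k=1}^t\Delta_{k+1}\le \frac{D^2}{2\eta}S_{t+1}+2G\eta\sqrt{t}\sqrt{\sum_{k=1}^t\|\gcal_k\|^2/S_k^2}.
\]
Applying \eqref{eq:log-bound} again bounds this by $\frac{D^2}{2\eta}S_{t+1}+2G\sqrt{2(\eta^2+D^2)t\log(S_{t+1}/\gamma)}$. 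Inserting Step 1, and noting that $\log\sqrt{t\log t}\lesssim \log t$, gives $\sum_{k=1}^t\Delta_{k+1}\lesssim\sqrt{t\log t}$.

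\textbf{Step 3: average.} Jensen's inequality for the convex $F$ gives $F(\bar x_t)-F^*\le \frac1t\sum_{k=1}^t\Delta_{k+1}\lesssim\sqrt{\log t/t}$.

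No step is a genuine obstacle. The only care needed is at small $t$: when $\log t$ vanishes or is tiny, the bound should be read with $\log t$ replaced by $\log(1+t)$ (equivalently $\log((1+\sigma^2)t)$ from the theorem with $\sigma=0$), and the constants absorbed accordingly.
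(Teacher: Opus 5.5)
Your proposal is correct and follows the paper's route: the paper gets this corollary by setting $\sigma=0$ in the proof of Theorem~\ref{thm:stoch-convex-nonsmooth}, which is exactly your pathwise retracing (bound $S_{t+1}$ via \eqref{eq:log-bound} and Lemma~\ref{lem:sqrt-log}, then telescope the gap sum and average). Your small-$t$ caveat is also apt, since at $t=1$ the stated bound $\sqrt{\log t/t}=0$ only makes sense with $\log t$ replaced by $\log(1+t)$ (or for $t\ge 2$), a point the paper leaves implicit.
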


\subsection{Smooth Case}
In this subsection, we consider the case where $f$ is convex and $L$-smooth and obtain the following convergence result for Algorithm \ref{Alg:adaprox}.
\begin{theorem}
\label{thm:stoch-convex-smooth}
Suppose $f$ is convex and $L$-smooth, and Assumptions \ref{ass:seq-bd} and \ref{ass:bd-stoch} hold. Let $\{x_k\}_{k\ge 1}$ be the iterates generated by Algorithm \ref{Alg:adaprox} with $b_k\equiv1$ and any $\gamma, \eta>0$. Denote $\bar{x}_{t}:=\frac{1}{t}\sum_{k=1}^{t} x_{k+1}$. Then for any $t \ge 1$, it holds that
\[
\ebb[F(\bar{x}_{t})-F^*] \lesssim \frac{1+\log \big(1+\sigma\sqrt{t}\big)}{t}+\frac{\sigma\log \big(1+\sigma \sqrt{t}\big)}{\sqrt{t}}.
\]
\end{theorem}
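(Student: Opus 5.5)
The plan follows the two-stage structure of the proof of Theorem~\ref{thm:stoch-convex-nonsmooth}: first bound $\ebb[S_{t+1}]$, then sum the descent inequality \eqref{eq:descent-smooth} of Lemma~\ref{lem:F-descent-cvx}(ii) and take expectations.

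\textbf{Step 1: bounding $\ebb[S_{t+1}]$.} Here I reuse the nonconvex machinery. Since $f$ is $L$-smooth, inequality \eqref{eq:stoch-S-bound} holds verbatim. Its derivation only used Lemma~\ref{lem:F-descent}, $F\ge F^*$, and Lemma~\ref{lem:summation}, not nonconvexity. It gives
\[
2\eta(S_{t+1}-\gamma)\le \Delta_1+L(\eta^2+D^2)\log\frac{S_{t+1}}{\gamma}+\sum_{k=1}^t\langle\delta_k,x_{k+1}-x_k\rangle .
\]
Taking expectations, I apply Lemma~\ref{lem:stoc-ip-sum} with $b_k\equiv1$ and Jensen. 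This is exactly the chain \eqref{eq:stoch-S-bound-2}--\eqref{eq:stoch-S-final} with $\sum_k 1/b_k=t$, so
\[
\ebb[S_{t+1}]\lesssim 1+(1+\sigma\sqrt t)\log(1+\sigma\sqrt t).
\]
The same computation also yields $\log\ebb[S_{t+1}/\gamma]\lesssim 1+\log(1+\sigma\sqrt t)$.

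\textbf{Step 2: summing the descent inequality.} I sum \eqref{eq:descent-smooth} over $k=1,\dots,t$.
\begin{itemize}
\item The telescoping distance terms are handled exactly as in \eqref{eq:bound-x-diff}, giving at most $\frac{D^2}{2\eta}S_t\le\frac{D^2}{2\eta}S_{t+1}$.
\item The martingale term $\langle\delta_k,x_k-x^*\rangle$ vanishes in expectation.
\item The noise term $\sum_k\langle\delta_k,x_{k+1}-x_k\rangle$ is bounded by Lemma~\ref{lem:stoc-ip-sum} by $\sigma\sqrt{2(\eta^2+D^2)t}\sqrt{\log\ebb[S_{t+1}/\gamma]}$.
\item The remaining term is $\frac12\sum_k(L-1/\eta_k)\|x_{k+1}-x_k\|^2$. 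I treat it as in Lemma~\ref{lem:bd-F}: only indices with $S_k\le L\eta$ contribute. Bounding the positive part by $\frac{L}{2}\eta_k^2\|\widetilde\gcal_k\|^2=\frac{L\eta^2}{2}\|\widetilde\gcal_k\|^2/S_k^2$ and applying \eqref{eq:log-bound} of Lemma~\ref{lem:summation} up to the last such index $t^*$, as in Lemma~\ref{lem:bd-F}, gives a deterministic constant of order $L(\eta^2+D^2)\log\frac{L\eta}{\gamma}+LD^2$.
\end{itemize}
A simpler fallback is to bound that term by $\frac{L}{2}(\eta^2+D^2)\log(S_{t+1}^2/\gamma^2)$ and take expectations with Jensen. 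This also works, but it contributes an extra $\log(1+\sigma\sqrt t)$, which matches the stated $\log(1+\sigma\sqrt t)/t$ term anyway.

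\textbf{Step 3: assembling.} Combining gives
\[
\ebb\Big[\sum_{k=1}^t\Delta_{k+1}\Big]\lesssim 1+\log(1+\sigma\sqrt t)+\sigma\sqrt t\,\log(1+\sigma\sqrt t)+\sigma\sqrt t\sqrt{1+\log(1+\sigma\sqrt t)} .
\]
The last term is dominated by $\sigma\sqrt t(1+\log(1+\sigma\sqrt t))$. A leftover $\sigma\sqrt t$ without logarithm needs some care against the claimed bound, which has $\sigma\log(1+\sigma\sqrt t)/\sqrt t$ but also $1/t$. When $\sigma\sqrt t\ge1$, $\log(1+\sigma\sqrt t)\ge\log 2$ absorbs it. When $\sigma\sqrt t<1$, we have $\sigma\sqrt t<1$, which is absorbed into the constant term. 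Dividing by $t$ and using convexity of $F$ on $\bar x_t$ gives the claim.

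\textbf{Main obstacle.} The delicate point is Step 1: one must check that the nonconvex bound on $\ebb[S_{t+1}]$ transfers unchanged and gives only logarithmic growth in the smooth regime. This is what makes the smooth rate $1/t$ rather than $1/\sqrt t$. I expect it to go through since \eqref{eq:stoch-S-bound} relies only on smoothness.
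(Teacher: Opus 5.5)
Your proposal is correct and follows the paper's proof essentially step for step. You reuse the nonconvex bound \eqref{eq:stoch-S-final} on $\ebb[S_{t+1}]$ with $b_k\equiv 1$, sum \eqref{eq:descent-smooth} using \eqref{eq:bound-x-diff}, bound the $(L-1/\eta_k)$ term pathwise via the same $t^*$ argument, drop the martingale term, and apply Lemma~\ref{lem:stoc-ip-sum}. Your explicit case split for absorbing the leftover $\sigma\sqrt{t}$ is a more careful version of the paper's one-line absorption, which implicitly uses $(1+u)\log(1+u)\ge u$.
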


\begin{proof}[Proof of Theorem \ref{thm:stoch-convex-smooth}]
We start by summing \eqref{eq:descent-smooth} over $k=1,\dots, t$ and noting $\|x_{k+1}-x_k\|=\eta\|\widetilde{\gcal}_k\|/S_k$ to obtain
\begin{equation}\label{eq:stoch-sum-Delta-smooth}
    \begin{aligned}
    \sum_{k=1}^t\Delta_{k+1} \leq & \sum_{k=1}^t\frac{1}{2 \eta_k}\big(\|x_k-x^*\|^2-\|x_{k+1}-x^*\|^2\big)+\frac{\eta^2}{2}\sum_{k=1}^t\Big(L-\frac{1}{\eta_k}\Big)\frac{\|\widetilde{\gcal}_k\|^2}{S_k^2}\\
    & +\sum_{k=1}^t\langle\delta_k, x_k-x^*\rangle+\sum_{k=1}^t\langle\delta_k, x_{k+1}-x_k\rangle\\
    \leq & \frac{D^2}{2 \eta} S_t+\frac{\eta^2}{2}\sum_{k=1}^t\Big(L-\frac{1}{\eta_k}\Big)\frac{\|\widetilde{\gcal}_k\|^2}{S_k^2}+\sum_{k=1}^t\langle\delta_k, x_k-x^*\rangle+\sum_{k=1}^t\langle\delta_k, x_{k+1}-x_k\rangle,
    \end{aligned}
\end{equation}
where the second inequality follows from \eqref{eq:bound-x-diff}.

To bound the second term on the RHS of \eqref{eq:stoch-sum-Delta-smooth}, we consider two cases. Firstly, if $S_1:=\gamma> L\eta$, then it follows that
\[
\frac{1}{\eta_k}=\frac{S_k}{\eta}\ge \frac{S_1}{\eta}>L,~\forall k\ge 1,
\]
which further implies
\[
\frac{\eta^2}{2}\sum_{k=1}^t\Big(L-\frac{1}{\eta_k}\Big)\frac{\|\widetilde{\gcal}_k\|^2}{S_k^2}\leq 0.
\]
Otherwise if $\gamma\leq L\eta$, then $t^* := \max\{k \in [t] : L \geq 1/\eta_k\}$ is well-defined. Since $\eta_k=\eta/S_k$ is non-increasing, we have $L \geq S_k/\eta$ for all $1\leq k \leq t^*$ and hence $S_{t^*} \leq L\eta$. For $k > t^*$, we know from the definition of $t^*$ that $L\leq 1/\eta_k$. Therefore, by applying \eqref{eq:log-bound} in Lemma \ref{lem:summation} and noting that $\|\widetilde{\gcal}_k\|\leq D S_k/\eta$ for all $k\ge 1$, we obtain
\[
\frac{\eta^2}{2}\sum_{k=1}^t\Big(L-\frac{1}{\eta_k}\Big)\frac{\|\widetilde{\gcal}_k\|^2}{S_k^2} \leq \frac{L\eta^2}{2} \bigg(\sum_{k=1}^{t^*-1} \frac{\|\widetilde{\gcal}_k\|^2}{S_k^2}+ \frac{\|\widetilde{\gcal}_{t^*}\|^2}{S_{t^*}^2}\bigg)\leq \frac{L \eta^2}{2}\Big(1+\frac{D^2}{\eta^2}\Big) \log \frac{L^2 \eta^2}{\gamma^2}+\frac{L D^2}{2}.
\]
Combining the above two cases, we have
\begin{equation}\label{eq:stoch-smooth-sum-L}
    \frac{\eta^2}{2}\sum_{k=1}^t\Big(L-\frac{1}{\eta_k}\Big)\frac{\|\widetilde{\gcal}_k\|^2}{S_k^2} \leq \Big[L(\eta^2+D^2) \log \frac{L \eta}{\gamma}+\frac{L D^2}{2}\Big]_+.
\end{equation}

To proceed, we take expectation on both sides of \eqref{eq:stoch-sum-Delta-smooth} and apply $\ebb_k[\langle\delta_k, x_k-x^*\rangle]=0$ for all $k\ge 1$, \eqref{eq:stoch-smooth-sum-L}, and Lemma \ref{lem:stoc-ip-sum} with $b_k=1$ to obtain
\begin{equation}\label{eq:stoch-E-sum-Delta-smooth}
\begin{aligned}
\ebb\Big[\sum_{k=1}^t \Delta_{k+1}\Big] & \leq \frac{D^2}{2 \eta} \ebb[S_t]+\Big[L(\eta^2+D^2) \log \frac{L \eta}{\gamma}+\frac{L D^2}{2}\Big]_+ +\ebb\bigg[\sum_{k=1}^t\langle\delta_k, x_{k+1}-x_k\rangle\bigg] \\
& \leq \frac{D^2}{2 \eta} \ebb[S_{t+1}]+\Big[L(\eta^2+D^2) \log \frac{L \eta}{\gamma}+\frac{L D^2}{2}\Big]_++\sigma \sqrt{2(\eta^2+D^2) t} \sqrt{\log \ebb\bigg[\frac{S_{t+1}}{\gamma}\bigg]}.
\end{aligned}
\end{equation}
Moreover, we have established from the analysis in the nonconvex smooth case (cf. \eqref{eq:stoch-S-final}) and $b_k=1$ that
\begin{equation}\label{eq:stoch-S-smooth}
\ebb[S_{t+1}] \lesssim 1+\big(1+\sigma\sqrt{t}\big) \log \big(1+\sigma\sqrt{t}\big).
\end{equation}
Plugging \eqref{eq:stoch-S-smooth} into \eqref{eq:stoch-E-sum-Delta-smooth} and using $\sqrt{x}\leq \frac{1}{2}(1+x)$ yields
\begin{align*}
\ebb\Big[\sum_{k=1}^t \Delta_{k+1}\Big] \lesssim & 1+\big(1+\sigma\sqrt{t}\big) \log \big(1+\sigma\sqrt{t}\big)+\sigma\sqrt{t} \sqrt{\log \big(1+\sigma\sqrt{t}\big)}\\
\leq & 1+\big(1+\sigma\sqrt{t}\big) \log \big(1+\sigma\sqrt{t}\big)+\frac{\sigma\sqrt{t}}{2}\Big(1+\log \big(1+\sigma\sqrt{t}\big)\Big)\\
\lesssim & 1+\big(1+\sigma\sqrt{t}\big) \log \big(1+\sigma\sqrt{t}\big),
\end{align*}
which together with the convexity of $F$ implies that
\[
\ebb[F(\bar{x}_{t})-F^*] \leq \frac{1}{t}\ebb\Big[\sum_{k=1}^t \Delta_{k+1}\Big] \lesssim \frac{1+\log \big(1+\sigma\sqrt{t}\big)}{t}+\frac{\sigma\log \big(1+\sigma \sqrt{t}\big)}{\sqrt{t}}.
\]
The proof is now complete.
\end{proof}

Similarly, by setting $\sigma=0$ in Theorem \ref{thm:stoch-convex-smooth}, we can derive a convergence rate of $O(1/t)$ in the deterministic setting, as summarized in the following corollary.
\begin{corollary}
\label{coro:stoch-convex-smooth}
Suppose $f$ is convex and $L$-smooth, and Assumption \ref{ass:seq-bd} hold. Let $\{x_k\}_{k\ge 1}$ be the iterates generated by Algorithm \ref{Alg:adaprox} with full gradient $g_k=\nabla f(x_k)$ and any $\gamma, \eta>0$. Denote $\bar{x}_{t}:=\frac{1}{t}\sum_{k=1}^{t} x_{k+1}$. Then for any $t \ge 1$, it holds that
\[
F(\bar{x}_{t})-F^* \lesssim \frac{1}{t}.
\]
\end{corollary}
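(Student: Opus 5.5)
The plan is to specialize the proof of Theorem~\ref{thm:stoch-convex-smooth} to the deterministic case $g_k=\nabla f(x_k)$, so that $\delta_k\equiv 0$ and all expectations disappear. Summing \eqref{eq:descent-smooth} from Lemma~\ref{lem:F-descent-cvx}(ii) over $k=1,\dots,t$, the two noise terms vanish. The telescoping distance terms are bounded by $\frac{D^2}{2\eta}S_t$ via \eqref{eq:bound-x-diff}, and the term $\frac{\eta^2}{2}\sum_{k}(L-1/\eta_k)\|\gcal_k\|^2/S_k^2$ is bounded by the constant in \eqref{eq:stoch-smooth-sum-L}. That case analysis on $\gamma$ versus $L\eta$ used only Assumption~\ref{ass:seq-bd} and Lemma~\ref{lem:summation}, so it carries over verbatim. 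This yields
\[
\sum_{k=1}^t \Delta_{k+1}\le \frac{D^2}{2\eta}S_t+\Big[L(\eta^2+D^2)\log\frac{L\eta}{\gamma}+\frac{LD^2}{2}\Big]_+ .
\]

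The key step is showing that $S_t$ (equivalently $S_{t+1}$) stays bounded by a constant independent of $t$. This is exactly where the deterministic case improves on the stochastic one. I would reuse the nonconvex analysis, since $L$-smoothness is all that it needed. By Theorem~\ref{thm:nc-deter},
\[
\sum_{k=1}^t\|\gcal_k\|^2\le \Big(\frac{M}{\eta}+L\eta+\frac{LD}{2}\Big)^2+\frac{2M\gamma}{\eta},
\]
with $M$ the constant from Lemma~\ref{lem:bd-F}. Hence $S_{t+1}^2=\gamma^2+\sum_{k\le t}\|\gcal_k\|^2$ is bounded by a constant depending only on $\gamma,\eta,L,D,\Delta_1$.

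Alternatively, one can set $\sigma=0$ in \eqref{eq:stoch-S-final} to get $S_{t+1}\lesssim 1$ directly. This is legitimate because the derivation of \eqref{eq:stoch-S-bound-2} reduces to the deterministic inequality $2\eta(S_{t+1}-\gamma)\le \Delta_1+L(\eta^2+D^2)\log(S_{t+1}/\gamma)$, which Lemma~\ref{lem:log} solves.

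Combining the two displays gives $\sum_{k=1}^t\Delta_{k+1}\lesssim 1$. Convexity of $F$ (Jensen's inequality) then gives $F(\bar x_t)-F^*\le \frac1t\sum_{k=1}^t\Delta_{k+1}\lesssim 1/t$. The only step needing care is confirming that Lemma~\ref{lem:bd-F} and Theorem~\ref{thm:nc-deter} apply. They require only $L$-smoothness and Assumption~\ref{ass:seq-bd}, both of which hold here, and convexity is not needed for them. Note that $\Delta_1<\infty$ because $x_1\in\dom(h)$.
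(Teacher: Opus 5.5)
Your proposal is correct and is essentially the paper's argument. The paper obtains the corollary by setting $\sigma=0$ in Theorem~\ref{thm:stoch-convex-smooth}, which amounts to rerunning that proof with $\delta_k\equiv 0$ and bounding $S_{t+1}$ through the $\sigma=0$ case of \eqref{eq:stoch-S-final}, exactly as in your alternative. Your primary route, which bounds $S_{t+1}^2=\gamma^2+\sum_{k\le t}\|\gcal_k\|^2$ via Theorem~\ref{thm:nc-deter}, is an equally valid substitute, since that theorem needs only $L$-smoothness and Assumption~\ref{ass:seq-bd}.
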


\section{Acceleration under Convexity and Known Diameter}
\label{sec:acc}
In this section, we study an accelerated version of Algorithm \ref{Alg:adaprox}, by combining the accelerated stochastic approximation (AC-SA) algorithm \citep{lan2012optimal} with an adaptive step size scheme. The resulting method is summarized in Algorithm \ref{Alg:acc-adaprox}. Note that it extends the AdaACSA method \citep{ene2021adaptive} to the composite setting \eqref{main-prob}.
\begin{algorithm}
    \caption{Adaptive Accelerated Proximal Gradient Method for Solving \eqref{main-prob}}
    \label{Alg:acc-adaprox}
    \begin{algorithmic}
    \REQUIRE initial point $y_1=z_1 \in \mathrm{dom}(h)$; hyperparameters $\gamma,~\eta>0$; batch sizes $\{b_k\}_{k\ge 1}$ (only for stochastic setting).
    \STATE Initialize $S_1 := \gamma$ and $\alpha_0:= 0$.
    \FOR{$k = 1, 2, \dots$}
    \STATE Update weight 
    \begin{equation}\label{alg:alpha-update}
        \alpha_k:=\frac{1+\sqrt{1+4\alpha_{k-1}^2}}{2}.
    \end{equation}
    \STATE Set $\theta_k:=1/\alpha_k$ and generate 
    \begin{equation}\label{alg:x-update}
        x_k=\big(1-\theta_k\big) y_{k}+\theta_{k} z_{k}.
    \end{equation}
    \IF{$\nabla f$ is accessible}
    \STATE Compute full gradient $g_k:=\nabla f(x_k)$.
    \ELSE
    \STATE Sample $\{\xi_{k_j}\}_{j=1}^{b_k}\subset \dcal$ uniformly at random.
    \STATE Compute gradient approximation $g_k:=\frac{1}{b_k}\sum_{j=1}^{b_k}\nabla \ell(x_k;\xi_{k_j})$.
    \ENDIF
    \STATE Update the next iterates $y_{k+1}$ and $z_{k+1}$ via
    \begin{align}
    & z_{k+1}=\argmin_{x\in\rbb^d}\Big\{\langle g_k, x\rangle+\frac{\theta_k S_k}{2 \eta}\|x-z_k\|^2+h(x)\Big\},\label{alg:z-update} \\
    & y_{k+1}=x_k+\theta_k\big(z_{k+1}-z_k\big). \label{alg:y-update}
    \end{align}
    \STATE Update $S^2_{k+1}=S^2_k\big(1+\|z_{k+1}-z_k\|^2/\eta^2\big)$.
    \ENDFOR
    \end{algorithmic}
\end{algorithm}

Since the update of the intermediate point $z_{k+1}$ in Algorithm \ref{Alg:acc-adaprox} requires solving a proximal subproblem \eqref{alg:z-update}, we replace Assumption \ref{ass:seq-bd} with the boundedness of $\{z_k\}_{k\ge 1}$ throughout this section.
\begin{assumption}\label{ass:z-bd}
    There exists a constant $D>0$ such that $\|z_{k+1}-z_k\|\leq D$ and $\|z_k-x^*\|\leq D$ for all $k\ge 1$.
\end{assumption}
Similar to the previous section, we focus on deriving convergence results of Algorithm \ref{Alg:acc-adaprox} in stochastic settings and fix the batch size $b_k\equiv 1$ for simplicity, while extensions to general mini-batch versions follow readily from \eqref{eq:var-bd}. With slight abuse of notation, we denote
\[
\Delta_k:=F(y_k)-F^*,\quad \eta_k:=\frac{\eta}{S_k},\quad  \widetilde{\gcal}_{z,k}:=\frac{z_k-z_{k+1}}{\eta_k}
\]
for the analysis of Algorithm \ref{Alg:acc-adaprox} throughout this section. 
Then it follows from the update rule of $S_{k+1}$ that
\begin{equation}\label{eq:S-formula}
    S_k^2=\gamma^2+\sum_{j=1}^{k-1}\|\widetilde{\gcal}_{z,j}\|^2.
\end{equation}

Before presenting the main results, we first introduce the following descent lemma for Algorithm \ref{Alg:acc-adaprox} under the convexity of $f$.
\begin{lemma}\label{lem:F-descent-cvx-acc}
Suppose $f$ in \eqref{main-prob} is convex. Let $\{x_k, y_k, z_k\}_{k\ge 1}$ be the sequence generated by Algorithm \ref{Alg:acc-adaprox}. Then, for any $k\ge 1$ and $x\in\rbb^d$, we have the following results:
\begin{itemize}
    \item[(\romannumeral 1).] If $f$ is $G$-Lipschitz continuous, then
    \begin{equation}\label{eq:descent-lip-acc}
    \begin{aligned}
    F(y_{k+1}) \leq & \big(1-\theta_k\big) F(y_k)+\theta_k F(x)+\theta_k\langle \delta_k, z_{k+1}-z_k\rangle+\theta_k\langle\delta_k, z_k-x\rangle \\
    & +\frac{\theta_k^2}{2 \eta_k}\big(\|z_k-x\|^2-\|z_{k+1}-x\|^2\big)-\frac{\theta_k^2}{2 \eta_k}\|z_{k+1}-z_k\|^2+2 G \theta_k\|z_{k+1}-z_k\|.
    \end{aligned}
    \end{equation}
    \item[(\romannumeral 2).] If $f$ is $L$-smooth, then
    \begin{equation}\label{eq:descent-smooth-acc}
    \begin{aligned}
    F(y_{k+1}) \leq & \big(1-\theta_k\big) F(y_k)+\theta_k F(x)+\theta_k\langle\delta_k, z_{k+1}-z_k\rangle+\theta_k\langle\delta_k, z_k-x\rangle \\
    & +\frac{\theta_k^2}{2 \eta_k}\big(\|z_k-x\|^2-\|z_{k+1}-x\|^2\big)+\frac{\theta_k^2}{2}\Big(L-\frac{1}{\eta_k}\Big)\|z_{k+1}-z_k\|^2.
    \end{aligned}
    \end{equation}
\end{itemize}
\end{lemma}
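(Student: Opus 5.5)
The plan is to mimic the proof of Lemma~\ref{lem:F-descent-cvx}, but anchored at the extrapolated point $x_k$ and using the two identities of the AC-SA scheme:
\[
y_{k+1}-x_k=\theta_k(z_{k+1}-z_k),\qquad y_{k+1}=(1-\theta_k)y_k+\theta_k z_{k+1},
\]
where the second follows from \eqref{alg:x-update} and \eqref{alg:y-update}.

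First we handle $f$. In the smooth case (ii), $L$-smoothness gives
\[
f(y_{k+1})\le f(x_k)+\langle\nabla f(x_k),y_{k+1}-x_k\rangle+\tfrac{L\theta_k^2}{2}\|z_{k+1}-z_k\|^2.
\]
In the Lipschitz case (i), the bound \eqref{eq:Lip-f} (which needs only convexity and $G$-Lipschitzness, not differentiability) gives the same estimate with $2G\theta_k\|z_{k+1}-z_k\|$ in place of the quadratic term. Then we write $f(x_k)=(1-\theta_k)f(x_k)+\theta_k f(x_k)$ and apply convexity twice. The first copy gives $(1-\theta_k)f(x_k)\le(1-\theta_k)\big(f(y_k)+\langle\nabla f(x_k),x_k-y_k\rangle\big)$. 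The second gives $\theta_k f(x_k)\le\theta_k\big(f(x)+\langle\nabla f(x_k),x_k-x\rangle\big)$.

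Collecting the linear terms, $(1-\theta_k)(x_k-y_k)+\theta_k(x_k-x)+(y_{k+1}-x_k)=\theta_k(z_{k+1}-x)$. This holds because $x_k-(1-\theta_k)y_k=\theta_k z_k$ and $y_{k+1}-x_k=\theta_k(z_{k+1}-z_k)$. Hence
\[
f(y_{k+1})\le(1-\theta_k)f(y_k)+\theta_k f(x)+\theta_k\langle\nabla f(x_k),z_{k+1}-x\rangle+(\text{curvature/Lipschitz term}).
\]

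Next we handle $h$ and the gradient inner product. Convexity of $h$ together with $y_{k+1}=(1-\theta_k)y_k+\theta_k z_{k+1}$ gives $h(y_{k+1})\le(1-\theta_k)h(y_k)+\theta_k h(z_{k+1})$. We split $\nabla f(x_k)=g_k+\delta_k$ and write $\langle\delta_k,z_{k+1}-x\rangle=\langle\delta_k,z_{k+1}-z_k\rangle+\langle\delta_k,z_k-x\rangle$, which produces exactly the two noise terms in the statement. For $\theta_k\big(\langle g_k,z_{k+1}-x\rangle+h(z_{k+1})\big)$ we use the optimality condition of \eqref{alg:z-update}: $0\in g_k+\frac{\theta_k}{\eta_k}(z_{k+1}-z_k)+\partial h(z_{k+1})$. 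This is the analogue of \eqref{eq:convex-h} with step $\eta_k/\theta_k$. Hence
\[
\langle g_k,z_{k+1}-x\rangle+h(z_{k+1})\le h(x)+\tfrac{\theta_k}{2\eta_k}\big(\|z_k-x\|^2-\|z_{k+1}-x\|^2-\|z_{k+1}-z_k\|^2\big),
\]
by the same three-point identity used in \eqref{eq:convex-trick}. Multiplying by $\theta_k$ gives the $\theta_k^2/(2\eta_k)$ coefficients, and $\theta_k f(x)+\theta_k h(x)=\theta_k F(x)$.

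Summing the $f$ and $h$ pieces yields \eqref{eq:descent-lip-acc}. For \eqref{eq:descent-smooth-acc}, the smooth curvature term $\frac{L\theta_k^2}{2}\|z_{k+1}-z_k\|^2$ merges with $-\frac{\theta_k^2}{2\eta_k}\|z_{k+1}-z_k\|^2$ to give $\frac{\theta_k^2}{2}(L-1/\eta_k)\|z_{k+1}-z_k\|^2$.

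The main bookkeeping obstacle is checking that the linear terms in $\nabla f(x_k)$ collapse exactly to $\theta_k(z_{k+1}-x)$, which relies on \eqref{alg:x-update} and \eqref{alg:y-update}. A second point to verify is $\theta_k\in(0,1]$, needed for the convex combinations; this holds since $\alpha_k\ge1$ by \eqref{alg:alpha-update}, with $\theta_1=1$.
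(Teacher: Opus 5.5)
Your proposal is correct and follows essentially the same route as the paper's proof. Both arguments use the same upper model at $x_k$ (the $2G\theta_k\|z_{k+1}-z_k\|$ bound or the $L$-smooth bound), the same convexity split of $f(x_k)$ collapsing to $\theta_k\langle\nabla f(x_k),z_{k+1}-x\rangle$ via $y_{k+1}=(1-\theta_k)y_k+\theta_k z_{k+1}$, and the same combination of convexity of $h$ with the optimality condition of \eqref{alg:z-update} and the three-point identity. The differences are cosmetic: the paper rearranges the $h$-convexity inequality before inserting it into the optimality condition, whereas you add the two inequalities directly, and you state the check $\theta_k\in(0,1]$ that the paper leaves implicit.
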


\begin{proof}[Proof of Lemma \ref{lem:F-descent-cvx-acc}]
We first prove (\romannumeral 1). Since $f$ is convex and $G$-Lipschitz continuous, following similar arguments as the derivation of \eqref{eq:Lip-f} gives
\begin{equation}\label{eq:yk-lip}
f(y_{k+1}) \leq f(x_k)+\langle \nabla f(x_k), y_{k+1}-x_k\rangle+2G\|y_{k+1}-x_k\|.
\end{equation}
To bound $f(x_k)$, we obtain from convexity of $f$ that
\begin{equation}\label{eq:f-xk}
\begin{aligned}
f(x_k)-\big[(1-\theta_k) f(y_k)+\theta_k f(x)\big]=&\theta_k\big(f(x_k)-f(x)\big)+\big(1-\theta_k\big)\big(f(x_k)-f(y_k)\big) \\
\leq & \theta_k\langle \nabla f(x_k), x_k-x\rangle+\big(1-\theta_k\big)\langle \nabla f(x_k), x_k-y_k\rangle \\
= & \langle \nabla f(x_k), x_k-\theta_k x-\big(1-\theta_k\big) y_k\rangle.
\end{aligned}
\end{equation}
Applying \eqref{eq:f-xk} on \eqref{eq:yk-lip} leads to
\begin{equation}\label{eq:f-yk}
f(y_{k+1}) \leq\big(1-\theta_k\big) f(y_k)+\theta_k f(x)+\langle\nabla f(x_k), y_{k+1}-\theta_k x-\big(1-\theta_k\big) y_k\rangle+2G\|y_{k+1}-x_k\|.
\end{equation}
To proceed, note from the update rules \eqref{alg:x-update} and \eqref{alg:y-update} that
\begin{equation}\label{eq:y-rule}
y_{k+1}=x_k+\theta_k\big(z_{k+1}-z_k\big)=(1-\theta_k)y_k + \theta_k z_k +\theta_k\big(z_{k+1}-z_k\big) = \theta_k z_{k+1}+\big(1-\theta_k\big) y_k,
\end{equation}
which together with \eqref{eq:f-yk} implies that
\begin{equation}\label{eq:f-yk-2}
\begin{aligned}
f(y_{k+1}) \leq & \big(1-\theta_k\big) f(y_k)+\theta_k f(x)+\theta_k\langle\nabla f(x_k), z_{k+1}-x\rangle+2G \theta_k\|z_{k+1}-z_k\| \\
=&\big(1-\theta_k\big) f(y_k)+\theta_k f(x)+\theta_k\langle g_k, z_{k+1}-x\rangle+\theta_k\langle\delta_k, z_{k+1}-z_k\rangle \\
& +\theta_k\langle\delta_k, z_k-x\rangle+2G \theta_k\|z_{k+1}-z_k\|.
\end{aligned}
\end{equation}
Next, we bound the inner product $\theta_k\langle g_k, z_{k+1}-x\rangle$ on the RHS of \eqref{eq:f-yk-2}. To this end, we first note from the convexity of $h$ and \eqref{eq:y-rule} that
\[
h(y_{k+1}) \leq \theta_k h(z_{k+1})+\big(1-\theta_k\big) h(y_k),
\]
which can be rearranged as (noting $\alpha_k:=1/\theta_k$)
\begin{equation}\label{eq:h-zk}
h(z_{k+1}) \geq \alpha_k h(y_{k+1})-\alpha_k\big(1-\theta_k\big) h(y_k).
\end{equation}
By the optimality condition of $z_{k+1}$, $\eta_k=\eta/S_k$, and \eqref{eq:h-zk}, we have
\begin{equation*}
\begin{aligned}
& h(x) \geq h(z_{k+1})+\Big\langle g_k+\frac{\theta_k}{\eta_k}\big(z_{k+1}-z_k\big), z_{k+1}-x\Big\rangle \\
\ge & \alpha_k h(y_{k+1})-\alpha_k\big(1-\theta_k\big) h(y_k)+\langle g_k, z_{k+1}-x\rangle+\frac{\theta_k}{ \eta_k}\langle z_{k+1}-z_k, z_{k+1}-x\rangle\\
= & \alpha_k h(y_{k+1})-\alpha_k\big(1-\theta_k\big) h(y_k)+\langle g_k, z_{k+1}-x\rangle+\frac{\theta_k}{2 \eta_k}\big(\|z_{k+1}-x\|^2+\|z_{k+1}-z_k\|^2-\|z_k-x\|^2\big),
\end{aligned}
\end{equation*}
which gives the following after multiplying by $\theta_k$ and rearranging the terms:
\begin{equation}\label{eq:ip-gk-zk}
\theta_k\langle g_k, z_{k+1}-x\rangle \leq \theta_k h(x)+\big(1-\theta_k\big) h(y_k)-h(y_{k+1})+\frac{\theta_k^2}{2 \eta_k}(\|z_{k}-x\|^2-\|z_{k+1}-x\|^2-\|z_{k+1}-z_k\|^2).
\end{equation}
Applying \eqref{eq:ip-gk-zk} on \eqref{eq:f-yk-2} and rearranging the terms using $F=f+h$ leads to \eqref{eq:descent-lip-acc}.

We proceed to prove (\romannumeral 2), which overlaps a lot with the analysis for establishing (\romannumeral 1). The main difference is that when $f$ is convex and $L$-smooth, instead of \eqref{eq:yk-lip}, we have the following refined bound:
\[
f(y_{k+1}) \leq f(x_k)+\langle \nabla f(x_k), y_{k+1}-x_k\rangle+\frac{L}{2}\|y_{k+1}-x_k\|^2.
\]
Then, following similar arguments as those from \eqref{eq:f-xk} to \eqref{eq:f-yk-2}, we obtain
\begin{equation*}
\begin{aligned}
    f(y_{k+1}) \leq& \big(1-\theta_k\big) f(y_k)+\theta_k f(x)+\theta_k\langle g_k, z_{k+1}-x\rangle+\theta_k\langle\delta_k, z_{k+1}-z_k\rangle\\
    &+\theta_k\langle\delta_k, z_k-x\rangle+\frac{L\theta_k^2}{2}\|z_{k+1}-z_k\|^2,
\end{aligned}    
\end{equation*}
which together with \eqref{eq:ip-gk-zk} leads to \eqref{eq:descent-smooth-acc}. The proof is thus complete.
\end{proof}

To estimate the summation of inner products involving the stochastic noises, we also need the following lemma,
whose proof is similar to that of Lemma \ref{lem:stoc-ip-sum}. 
\begin{lemma}\label{lem:stoc-ip-sum-acc}
    Suppose Assumptions \ref{ass:bd-stoch} and \ref{ass:z-bd} hold. Let $\{x_k, y_k, z_k\}_{k\ge 1}$ be the iterates generated by Algorithm \ref{Alg:acc-adaprox} with unit batch sizes $b_k\equiv 1$ and any $\gamma, \eta>0$. Then for any $t \ge 1$, we have
    \[
    \ebb\Big[\sum_{k=1}^t\alpha_k\langle\delta_k, z_{k+1}-z_k\rangle\Big] \leq \sigma \sqrt{2(\eta^2+D^2) \sum_{k=1}^t \alpha_k^2} \sqrt{\log \ebb\bigg[\frac{S_{t+1}}{\gamma}\bigg]},
    \]
\end{lemma}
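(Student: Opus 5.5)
The plan mirrors the proof of Lemma~\ref{lem:stoc-ip-sum}, with $\alpha_k$ carried along as a deterministic weight. Note that $\alpha_k$ is defined by the recursion \eqref{alg:alpha-update} with $\alpha_0=0$, so it is a deterministic sequence. Write $z_{k+1}-z_k=-\eta_k\widetilde{\gcal}_{z,k}$, so that $\|z_{k+1}-z_k\|=\eta\|\widetilde{\gcal}_{z,k}\|/S_k$. Applying Cauchy--Schwarz termwise gives
\[
\sum_{k=1}^t\alpha_k\langle\delta_k, z_{k+1}-z_k\rangle\leq \eta\sum_{k=1}^t\alpha_k\|\delta_k\|\frac{\|\widetilde{\gcal}_{z,k}\|}{S_k}.
\]

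Next I take expectations and apply the Cauchy--Schwarz inequality for expectations to each term. This gives $\ebb[\|\delta_k\|\,\|\widetilde{\gcal}_{z,k}\|/S_k]\leq \sqrt{\ebb[\|\delta_k\|^2]}\sqrt{\ebb[\|\widetilde{\gcal}_{z,k}\|^2/S_k^2]}$. With $b_k\equiv1$, \eqref{eq:var-bd} (tower property) yields $\ebb[\|\delta_k\|^2]\leq\sigma^2$. Then I apply the discrete Cauchy--Schwarz inequality over $k$, pairing $\alpha_k$ with $\sqrt{\ebb[\|\widetilde{\gcal}_{z,k}\|^2/S_k^2]}$, to obtain
\[
\ebb\Big[\sum_{k=1}^t\alpha_k\langle\delta_k, z_{k+1}-z_k\rangle\Big]\leq \sigma\eta\sqrt{\sum_{k=1}^t\alpha_k^2}\sqrt{\ebb\Big[\sum_{k=1}^t\frac{\|\widetilde{\gcal}_{z,k}\|^2}{S_k^2}\Big]}.
\]
This is the only place where the argument differs from Lemma~\ref{lem:stoc-ip-sum}: the weights move into the $\sum_k\alpha_k^2$ factor rather than $\sum_k 1/b_k$. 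Because $\alpha_k$ is deterministic, this causes no coupling issues.

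Finally, I bound the remaining sum using \eqref{eq:log-bound} in Lemma~\ref{lem:summation}. I take $a_k:=\|\widetilde{\gcal}_{z,k}\|^2$, $A_k:=S_{k+1}^2$ (valid by \eqref{eq:S-formula}), and $C:=D^2/\eta^2$. The hypothesis $a_k\leq CA_{k-1}$ holds because $\|\widetilde{\gcal}_{z,k}\|=S_k\|z_k-z_{k+1}\|/\eta\leq DS_k/\eta$ by Assumption~\ref{ass:z-bd}. This gives $\sum_k\|\widetilde{\gcal}_{z,k}\|^2/S_k^2\leq(1+D^2/\eta^2)\log(S_{t+1}^2/\gamma^2)=2(1+D^2/\eta^2)\log(S_{t+1}/\gamma)$. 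Jensen's inequality for the concave $\log$ then gives $\ebb[\log(S_{t+1}/\gamma)]\leq\log\ebb[S_{t+1}/\gamma]$. Combining the constants, $\eta\sqrt{2(1+D^2/\eta^2)}=\sqrt{2(\eta^2+D^2)}$, which yields exactly the stated bound.

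There is no real obstacle here. The one point to check is that the weights $\alpha_k$ are nonrandom, so that pulling them outside the expectations and splitting with Cauchy--Schwarz is legitimate. This holds by \eqref{alg:alpha-update}.
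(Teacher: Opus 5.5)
Your proposal is correct and matches the paper's proof step for step. Both arguments use termwise Cauchy--Schwarz with $\|z_{k+1}-z_k\|=\eta\|\widetilde{\gcal}_{z,k}\|/S_k$, then Cauchy--Schwarz in expectation with $\ebb[\|\delta_k\|^2]\le\sigma^2$, then the discrete Cauchy--Schwarz inequality pairing the deterministic weights $\alpha_k$ with the per-step ratios, and finally \eqref{eq:log-bound} with $C=D^2/\eta^2$ followed by Jensen's inequality for $\log$.
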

\begin{proof}[Proof of Lemma \ref{lem:stoc-ip-sum-acc}]
Firstly, note from Cauchy-Schwarz inequality and $\|z_{k+1}-z_k\|=\eta\|\widetilde{\gcal}_{z,k}\|/S_k$ that
\[
\sum_{k=1}^t\alpha_k\langle\delta_k, z_{k+1}-z_k\rangle\leq\sum_{k=1}^t\alpha_k\|\delta_k\| \|z_{k+1}-z_k\|=\eta\sum_{k=1}^t\alpha_k\|\delta_k\| \frac{\|\widetilde{\gcal}_{z,k}\|}{S_k}.
\]
Taking expectations on both sides and applying Cauchy-Schwarz inequality for expectations $|\mathbb{E}[XY]| \leq \sqrt{\mathbb{E}[X^2] \mathbb{E}[Y^2]}$, we obtain
\begin{equation}\label{eq:stoch-error-acc}
\begin{aligned}
\ebb\Big[\sum_{k=1}^t\alpha_k\langle\delta_k, z_{k+1}-z_k\rangle\Big]
\leq & \eta\sum_{k=1}^t\alpha_k\ebb\bigg[\|\delta_k\| \frac{\|\widetilde{\gcal}_{z,k}\|}{S_k}\bigg]
\leq \eta\sum_{k=1}^t\alpha_k \sqrt{\ebb[\|\delta_k\|^2] \ebb\bigg[\frac{\|\widetilde{\gcal}_{z,k}\|^2}{S_k^2}\bigg]}\\
\leq& \sigma \eta\sum_{k=1}^t\alpha_k\sqrt{\ebb\bigg[\frac{\|\widetilde{\gcal}_{z,k}\|^2}{S_k^2}\bigg]}
\leq \sigma \eta\sqrt{\sum_{k=1}^t \alpha_k^2} \sqrt{\ebb\bigg[\sum_{k=1}^t \frac{\|\widetilde{\gcal}_{z,k}\|^2}{S_k^2}\bigg]},
\end{aligned}
\end{equation}
where we used $\ebb_k[\|\delta_k\|^2]\leq\sigma^2$ for the third inequality, and Cauchy-Schwarz inequality for the last one. Applying \eqref{eq:log-bound} in Lemma \ref{lem:summation} with $a_k:=\|\widetilde{\gcal}_{z,k}\|^2$, $A_k:=S_{k+1}^2$, and $C:=D^2/\eta^2$, we further derive from \eqref{eq:stoch-error-acc} that
\begin{align*}
\ebb\Big[\sum_{k=1}^t\alpha_k\langle\delta_k, z_{k+1}-z_k\rangle\Big]
\leq & \sigma\eta \sqrt{\sum_{k=1}^t \alpha_k^2} \sqrt{\Big(1+\frac{D^2}{\eta^2}\Big) \ebb\bigg[\log \frac{S_{t+1}^2}{\gamma^2}\bigg]}\\
\leq & \sigma \sqrt{2(\eta^2+D^2) \sum_{k=1}^t \alpha_k^2} \sqrt{\log \ebb\bigg[\frac{S_{t+1}}{\gamma}\bigg]},
\end{align*}
where the last inequality follows from the concavity of $\log(\cdot)$. The proof is now complete.
\end{proof}
In our analysis, we shall exploit the following property of the weights $\{\alpha_k\}_{k\ge 1}$:
\begin{equation}\label{eq:alpha-property}
    \alpha_k^2-\alpha_k=\alpha_{k-1}^2~~\textit{and}~~\frac{k+1}{2}\leq\alpha_k\leq k+1,\quad\forall k\ge 1.
\end{equation}
Both results in \eqref{eq:alpha-property} follow from \eqref{alg:alpha-update}: the first identity is the quadratic equation in $\alpha_k$ with solution \eqref{alg:alpha-update}, while the inequality can be verified by induction.

Leveraging these auxiliary results, we demonstrate in the upcoming two subsections that, when $f$ is convex and the diameter $D$ in Assumption \ref{ass:z-bd} is known, Algorithm \ref{Alg:acc-adaprox} with $b_k\equiv 1$ preserves a near-optimal convergence rate of order 
\[
O\Bigg(\frac{(1+\sigma)\sqrt{\log \big((1+\sigma^2)t^3\big)}}{\sqrt{t}}\Bigg),
\]
similar to that of Algorithm \ref{Alg:adaprox} if $f$ is Lipschitz continuous. 
Moreover, it can achieve an improved convergence rate of order 
\[
O\bigg(\frac{1+\log \big(1+\sigma t^{3/2}\big)}{t^2}+\frac{\sigma\log \big(1+\sigma t^{3/2}\big)}{\sqrt{t}}\bigg)
\]
for $L$-smooth $f$, which is optimal up to logarithmic factors~\citep{lan2012optimal}. A further difference is that convergence of Algorithm~\ref{Alg:adaprox} is measured at the averaged iterate $\bar{x}_t$, whereas that of Algorithm~\ref{Alg:acc-adaprox} is at the last iterate $y_t$, which is preferred in practice as it requires no additional computation.

\subsection{Lipschitz Continuous Case}
We first study the convergence of Algorithm \ref{Alg:acc-adaprox} for $G$-Lipschitz continuous $f$. The following theorem shows that if $\eta> \sqrt{2}D/2$, then Algorithm \ref{Alg:acc-adaprox} achieves a near-optimal convergence $O\big((1+\sigma)\sqrt{\log ((1+\sigma^2)t^3)/t}\big)$. 

\begin{theorem}\label{thm:cvx-lip-acc}
Suppose $f$ is convex and $G$-Lipschitz continuous, and Assumptions \ref{ass:bd-stoch} and \ref{ass:z-bd} hold. Let $\{x_k, y_k, z_k\}_{k\ge 1}$ be the iterates generated by Algorithm \ref{Alg:acc-adaprox} with unit batch sizes $b_k\equiv 1$, $\gamma>0$, and $\eta> \sqrt{2}D/2$. Then for any $t \ge 1$, we have
\[
\ebb[F(y_{t+1})-F^*] \lesssim \frac{1+\sigma}{\sqrt{t}} \sqrt{\log \big((1+\sigma^2) t^3\big)}.
\]
\end{theorem}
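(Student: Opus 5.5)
The plan is to run the standard AC-SA potential argument on Lemma~\ref{lem:F-descent-cvx-acc}(i) with $x=x^*$. The point is to keep the negative term $-\frac{\theta_k^2}{2\eta_k}\|z_{k+1}-z_k\|^2$ rather than discard it as in the proof of Theorem~\ref{thm:stoch-convex-nonsmooth}. That term will control $S_{t+1}$, and the condition $\eta>\sqrt{2}D/2$ is exactly what makes this work.

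\textbf{Step 1: weighted telescoping.} Multiply \eqref{eq:descent-lip-acc} by $\alpha_k^2=1/\theta_k^2$. By \eqref{eq:alpha-property}, $\alpha_k^2(1-\theta_k)=\alpha_k^2-\alpha_k=\alpha_{k-1}^2$, so
\[
\alpha_k^2\Delta_{k+1}\le \alpha_{k-1}^2\Delta_k+\alpha_k\langle\delta_k,z_{k+1}-z_k\rangle+\alpha_k\langle\delta_k,z_k-x^*\rangle+\tfrac{1}{2\eta_k}\big(\|z_k-x^*\|^2-\|z_{k+1}-x^*\|^2\big)-\tfrac{1}{2\eta_k}\|z_{k+1}-z_k\|^2+2G\alpha_k\|z_{k+1}-z_k\|.
\]
Sum over $k=1,\dots,t$. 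Since $\alpha_0=0$, the $\Delta_1$ term disappears. The distance terms telescope exactly as in \eqref{eq:bound-x-diff}, because $1/\eta_k$ is nondecreasing and $\|z_k-x^*\|\le D$; they contribute at most $\frac{D^2}{2\eta}S_t\le \frac{D^2}{2\eta}S_{t+1}$.

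For the negative term, write $\frac{1}{2\eta_k}\|z_{k+1}-z_k\|^2=\frac{\eta}{2}\|\widetilde{\gcal}_{z,k}\|^2/S_k$. The lower bound in \eqref{eq:sqrt-bound} then gives at least $\eta(S_{t+1}-\gamma)$. Altogether,
\[
\alpha_t^2\Delta_{t+1}+\Big(\eta-\tfrac{D^2}{2\eta}\Big)S_{t+1}\le \eta\gamma+2G\eta\sum_{k=1}^t\alpha_k\tfrac{\|\widetilde{\gcal}_{z,k}\|}{S_k}+\sum_{k=1}^t\alpha_k\langle\delta_k,z_{k+1}-z_k\rangle+\sum_{k=1}^t\alpha_k\langle\delta_k,z_k-x^*\rangle,
\]
and $c:=\eta-\frac{D^2}{2\eta}>0$ precisely because $\eta>\sqrt{2}D/2$.

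\textbf{Step 2: bounding the right-hand side in expectation.} Take expectations; the last sum has zero mean since $z_k$ is $\mathcal{F}_{k-1}$-measurable. For the $G$-term, apply Cauchy--Schwarz, then \eqref{eq:log-bound} with $C=D^2/\eta^2$, then Jensen, to get
\[
2G\sqrt{2(\eta^2+D^2)\textstyle\sum_k\alpha_k^2}\,\sqrt{\log\ebb[S_{t+1}/\gamma]}.
\]
The noise term is handled by Lemma~\ref{lem:stoc-ip-sum-acc}, which gives the same form with $\sigma$ in place of $2G$. By \eqref{eq:alpha-property}, $\sum_{k\le t}\alpha_k^2\le\sum_{k\le t}(k+1)^2\lesssim t^3$.

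\textbf{Step 3: closing the argument.} Dropping the nonnegative $\alpha_t^2\Delta_{t+1}$ yields
\[
c\,\ebb[S_{t+1}/\gamma]\lesssim 1+(2G+\sigma)t^{3/2}\sqrt{\log\ebb[S_{t+1}/\gamma]}.
\]
Lemma~\ref{lem:sqrt-log} then gives $\ebb[S_{t+1}]\lesssim (1+\sigma)t^{3/2}\sqrt{\log((1+\sigma^2)t^3)}$. Next, drop the nonnegative $cS_{t+1}$ term instead and substitute this bound into the $\sqrt{\log\ebb[S_{t+1}/\gamma]}$ factor. The resulting $\log\log$ terms are absorbed as at the end of Theorem~\ref{thm:stoch-convex-nonsmooth}, giving
\[
\alpha_t^2\,\ebb[\Delta_{t+1}]\lesssim(1+\sigma)t^{3/2}\sqrt{\log((1+\sigma^2)t^3)}.
\]
Dividing by $\alpha_t^2\ge (t+1)^2/4$ yields the claimed rate.

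\textbf{Main obstacle.} The delicate step is Step 1: the coefficients must line up so that the accumulated distance term $\frac{D^2}{2\eta}S_{t+1}$ is dominated by the $\eta S_{t+1}$ harvested from the negative term. This is the only place the restriction on $\eta$ enters, and it is what allows a single inequality to bound both $S_{t+1}$ and $\Delta_{t+1}$. If $\eta\le\sqrt{2}D/2$, I would not expect this argument to go through as written.
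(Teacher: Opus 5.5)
Your proposal is correct and follows the paper's proof essentially step for step. You keep the negative term $-\frac{1}{2\eta_k}\|z_{k+1}-z_k\|^2$ and turn it, via the lower bound in \eqref{eq:sqrt-bound}, into $-\eta(S_{t+1}-\gamma)$, which dominates the $\frac{D^2}{2\eta}S_{t+1}$ distance term exactly when $\eta>\sqrt{2}D/2$. You then bound $\ebb[S_{t+1}]$ with Lemma~\ref{lem:sqrt-log} and substitute back into the same inequality with the $S_{t+1}$ term dropped, just as the paper does; the only cosmetic difference is that you discard $\alpha_0^2\Delta_1$ immediately using $\alpha_0=0$, whereas the paper carries it along.
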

\begin{proof}[Proof of Theorem \ref{thm:cvx-lip-acc}]
We start by plugging $x=x^*$ in \eqref{eq:descent-lip-acc} to obtain
\begin{equation}\label{eq:lip-acc-1}
\begin{aligned}
\Delta_{t+1} \leq & \big(1-\theta_t\big) \Delta_t+\theta_t\langle\delta_t, z_{t+1}-z_t\rangle+\theta_t\langle\delta_t, z_t-x^*\rangle +\frac{\theta_t^2}{2 \eta_t}\big(\|z_t-x^*\|^2-\|z_{t+1}-x^*\|^2\big)\\
&-\frac{\theta_t^2}{2 \eta_t}\|z_{t+1}-z_t\|^2+2 G \theta_t\|z_{t+1}-z_t\|.
\end{aligned}
\end{equation}
Multiplying both sides of \eqref{eq:lip-acc-1} by $\alpha_t^2$ and applying $\alpha_t\theta_t=1$ gives 
\begin{equation*}
\begin{aligned}
\alpha_t^2 \Delta_{t+1} \leq & (\alpha_t^2-\alpha_t) \Delta_t +\alpha_t\langle\delta_t, z_{t+1}-z_t\rangle+\alpha_t\langle\delta_t, z_t-x^*\rangle+\frac{1}{2 \eta_t}\big(\|z_t-x^*\|^2-\|z_{t+1}-x^*\|^2\big)\\
&-\frac{1}{2 \eta_t}\|z_{t+1}-z_t\|^2+2 G \alpha_t\|z_{t+1}-z_t\|,
\end{aligned}
\end{equation*}
which together with the equality in \eqref{eq:alpha-property} and $\|z_{t+1}-z_t\|=\eta_t\|\widetilde{\gcal}_{z,t}\|=\eta\|\widetilde{\gcal}_{z,t}\|/S_t$ yields
\begin{equation}\label{eq:lip-acc-2}
    \begin{aligned}
        \alpha_t^2 \Delta_{t+1} \leq & \alpha_{t-1}^2 \Delta_t +\alpha_t\langle\delta_t, z_{t+1}-z_t\rangle+\alpha_t\langle\delta_t, z_t-x^*\rangle+\frac{1}{2 \eta_t}\big(\|z_t-x^*\|^2-\|z_{t+1}-x^*\|^2\big)\\
        &-\frac{\eta}{2}\frac{\|\widetilde{\gcal}_{z,t}\|^2}{S_t}+2 G \eta\alpha_t\frac{\|\widetilde{\gcal}_{z,t}\|}{S_t}.
    \end{aligned}
\end{equation}
Applying \eqref{eq:lip-acc-2} recursively and noting from Assumption \ref{ass:z-bd} that
\begin{equation}\label{eq:bound-z-diff}
    \begin{aligned}
    &\sum_{k=1}^t \frac{1}{2 \eta_k}\big(\|z_k-x^*\|^2-\|z_{k+1}-x^*\|^2\big) \\
    =& \frac{\|z_1-x^*\|^2}{2\eta_1}+\sum_{k=2}^t\Big(\frac{1}{2\eta_k} - \frac{1}{2\eta_{k-1}}\Big)\|z_k-x^*\|^2 - \frac{\|z_{t+1}-x^*\|^2}{2\eta_t}\\
    \leq & \frac{D^2}{2\eta_1} + D^2\sum_{k=2}^t\Big(\frac{1}{2\eta_k} - \frac{1}{2\eta_{k-1}}\Big)=\frac{D^2}{2\eta_t}\leq \frac{D^2}{2 \eta}S_{t+1},
    \end{aligned}
\end{equation}
we further obtain
\begin{equation}\label{eq:lip-acc-desent}
    \begin{aligned}
         \alpha_t^2 \Delta_{t+1} \leq & \alpha_0^2 \Delta_1-\frac{\eta}{2} \sum_{k=1}^t \frac{\|\widetilde{\gcal}_{z,k}\|^2}{S_k}+\frac{D^2}{2 \eta}S_{t+1}+2 G \eta \sum_{k=1}^t \alpha_k \frac{\|\widetilde{\gcal}_{z,k}\|}{S_k}\\
         &+\sum_{k=1}^t \alpha_k\langle\delta_k, z_{k+1}-z_k\rangle+\sum_{k=1}^t \alpha_k\langle\delta_k, z_k-x^*\rangle.
    \end{aligned}
\end{equation}
To proceed, from the definition of $S_{t}$ \eqref{eq:S-formula}, we apply \eqref{eq:sqrt-bound} in Lemma \ref{lem:summation} to derive
\begin{equation}\label{eq:acc-lip-sum1}
    \frac{\eta}{2} \sum_{k=1}^t \frac{\|\widetilde{\gcal}_{z,k}\|^2}{S_k}\ge \eta\big(S_{t+1}-\gamma\big).
\end{equation}
Additionally, since $\|\widetilde{\gcal}_{z,t}\|^2=\|z_{t+1}-z_t\|^2/\eta_t^2\leq D^2S_t^2/\eta^2$, it follows from Cauchy-Schwarz inequality and \eqref{eq:log-bound} in Lemma \ref{lem:summation} that
\begin{equation}\label{eq:acc-lip-sum2}
\begin{aligned}
& 2 G \eta \sum_{k=1}^t \alpha_k \frac{\|\widetilde{\gcal}_{z,k}\|}{S_k} \leq 2 G \eta \sqrt{\sum_{k=1}^t \alpha_k^2} \sqrt{\sum_{k=1}^t \frac{\|\widetilde{\gcal}_{z,k}\|^2}{S_k^2}} \\
\leq & 2 G \eta \sqrt{\sum_{k=1}^t \alpha_k^2} \sqrt{\Big(1+\frac{D^2}{\eta^2}\Big) \log \frac{S_{t+1}^2}{\gamma^2}} \leq 2 G \sqrt{2(\eta^2+D^2) \sum_{k=1}^t \alpha_k^2} \sqrt{\log \frac{S_{t+1}}{\gamma}}.
\end{aligned}
\end{equation}
Applying \eqref{eq:acc-lip-sum1} and \eqref{eq:acc-lip-sum2} on \eqref{eq:lip-acc-desent} leads to
\begin{equation}\label{eq:lip-acc-desent-1}
    \begin{aligned}
         \alpha_t^2 \Delta_{t+1} \leq & \alpha_0^2 \Delta_1 + \gamma\eta-\Big(\eta-\frac{D^2}{2\eta}\Big)S_{t+1}+2 G \sqrt{2(\eta^2+D^2) \sum_{k=1}^t \alpha_k^2} \sqrt{\log \frac{S_{t+1}}{\gamma}}\\
         &+\sum_{k=1}^t \alpha_k\langle\delta_k, z_{k+1}-z_k\rangle+\sum_{k=1}^t \alpha_k\langle\delta_k, z_k-x^*\rangle.
    \end{aligned}
\end{equation}
Taking expectations on both sides of \eqref{eq:lip-acc-desent-1} and noting that $\eta-\frac{D^2}{2\eta}> 0$ due to $\eta> \frac{\sqrt{2}}{2}D$ and $\ebb_k[\langle\delta_k, z_k-x^*\rangle]=0$ for all $k\ge 1$, we derive
\begin{equation}\label{eq:lip-acc-desent-2}
\begin{aligned}
    \alpha_t^2 \ebb[\Delta_{t+1}] \leq & \alpha_0^2 \Delta_1 + \gamma\eta+2 G \sqrt{2(\eta^2+D^2) \sum_{k=1}^t \alpha_k^2}\ebb\Bigg[\sqrt{\log \frac{S_{t+1}}{\gamma}}\Bigg]+\ebb\bigg[\sum_{k=1}^t \alpha_k\langle\delta_k, z_{k+1}-z_k\rangle\bigg]\\
    \leq & \alpha_0^2 \Delta_1 + \gamma\eta+2 G \sqrt{2(\eta^2+D^2) \sum_{k=1}^t \alpha_k^2}\sqrt{\log \ebb\bigg[\frac{S_{t+1}}{\gamma}\bigg]} + \sigma \sqrt{2(\eta^2+D^2) \sum_{k=1}^t \alpha_k^2} \sqrt{\log \ebb\bigg[\frac{S_{t+1}}{\gamma}\bigg]}\\
    = & \alpha_0^2 \Delta_1 + \gamma\eta+ (2 G+\sigma) \sqrt{2(\eta^2+D^2) \sum_{k=1}^t \alpha_k^2}\sqrt{\log \ebb\bigg[\frac{S_{t+1}}{\gamma}\bigg]},
\end{aligned}
\end{equation}
where the second inequality follows from the concavity of $\sqrt{\log(\cdot)}$ and Lemma \ref{lem:stoc-ip-sum-acc}.

Next, we bound $\ebb[S_{t+1}]$. To this end, rearranging \eqref{eq:lip-acc-desent-1} gives
\begin{equation}\label{eq:S-lip-acc-1}
\begin{aligned}
\Big(\eta-\frac{D^2}{2\eta}\Big)S_{t+1} \leq & \alpha_0^2 \Delta_1+\gamma\eta-\alpha_t^2 \Delta_{t+1}+2 G \sqrt{2(\eta^2+D^2) \sum_{k=1}^t \alpha_k^2} \sqrt{\log \frac{S_{t+1}}{\gamma}}\\
&+\sum_{k=1}^t \alpha_k\langle\delta_k, z_{k+1}-z_k\rangle+\sum_{k=1}^t \alpha_k\langle\delta_k, z_k-x^*\rangle.
\end{aligned}
\end{equation}
Taking expectations on both sides of \eqref{eq:S-lip-acc-1}, then it follows from $\Delta_{t+1}\ge 0$, concavity of $\sqrt{\log(\cdot)}$, and Lemma \ref{lem:stoc-ip-sum-acc} that
\begin{equation}\label{eq:S-lip-acc-2}
\begin{aligned}
\Big(\eta-\frac{D^2}{2\eta}\Big)\ebb[S_{t+1}] \leq & \alpha_0^2 \Delta_1+\gamma\eta+2 G \sqrt{2(\eta^2+D^2) \sum_{k=1}^t \alpha_k^2} \ebb\Bigg[\sqrt{\log \frac{S_{t+1}}{\gamma}}\Bigg]+\ebb\bigg[\sum_{k=1}^t \alpha_k\langle\delta_k, z_{k+1}-z_k\rangle\bigg]\\
\leq & \alpha_0^2 \Delta_1 + \gamma\eta+ (2 G+\sigma) \sqrt{2(\eta^2+D^2) \sum_{k=1}^t \alpha_k^2}\sqrt{\log \ebb\bigg[\frac{S_{t+1}}{\gamma}\bigg]}.
\end{aligned}
\end{equation}
Let $\rho:=1-\frac{D^2}{2\eta^2}\in(0,1)$. Dividing both sides of \eqref{eq:S-lip-acc-2} by $\gamma\eta\rho$ leads to
\[
\ebb\bigg[\frac{S_{t+1}}{\gamma}\bigg] \leq \frac{\alpha_0^2 \Delta_1}{\gamma \eta \rho}+\frac{1}{\rho}+\frac{2 G+\sigma}{\gamma \eta \rho} \sqrt{2(\eta^2+D^2) \sum_{k=1}^t \alpha_k^2} \sqrt{\log \ebb\bigg[\frac{S_{t+1}}{\gamma}\bigg]}.
\]
Solving the above inequality using Lemma \ref{lem:sqrt-log} yields
\begin{equation*}
\ebb\bigg[\frac{S_{t+1}}{\gamma}\bigg] \leq \frac{4G+2\sigma}{\gamma \eta \rho} \sqrt{2(\eta^2+D^2) \sum_{k=1}^t \alpha_k^2} \sqrt{\log \bigg(\frac{2(2 G+\sigma)^2(\eta^2+D^2)}{\gamma^2 \eta^2 \rho^2} \sum_{k=1}^t \alpha_k^2+1\bigg)}+\frac{2 \alpha_0^2 \Delta_1}{\gamma \eta \rho}+\frac{2}{\rho}+1,
\end{equation*}
which leads to the following after omitting the constants $\{\gamma, \eta, D, G, \alpha_0^2\Delta_1\}$ and noting $\alpha_k\leq k+1$ for all $k\ge 1$:
\begin{equation}\label{eq:S-final-lip-acc}
    \ebb[S_{t+1}]\lesssim 1+(1+\sigma) \sqrt{\sum_{k=1}^t \alpha_k^2} \sqrt{\log \Big((1+\sigma^2) \sum_{k=1}^t \alpha_k^2\Big)}\lesssim (1+\sigma) t^{3/2} \sqrt{\log \big((1+\sigma^2) t^3\big)}.
\end{equation}

Finally, applying \eqref{eq:S-final-lip-acc} and $\alpha_k\leq k+1$ on \eqref{eq:lip-acc-desent-2} implies
\begin{equation}\label{eq:lip-acc-desent-3}
\begin{aligned}
\alpha_t^2 \ebb[\Delta_{t+1}] & \lesssim 1+(1+\sigma) \sqrt{\sum_{k=1}^t \alpha_k^2} \sqrt{\log \ebb\bigg[\frac{S_{t+1}}{\gamma}\bigg]} \\
& \lesssim (1+\sigma)t^{3/2} \sqrt{\log \Big((1+\sigma) t^{3/2} \sqrt{\log \big((1+\sigma^2) t^3\big)}\Big)}\\
& \lesssim (1+\sigma)t^{3/2} \sqrt{\log \big((1+\sigma^2) t^{3}\big)},
\end{aligned}
\end{equation}
where the last inequality follows from $\log \big((1+\sigma^2) t^3\big)\leq (1+\sigma^2) t^3-1\lesssim (1+\sigma)^2 t^3$. Dividing both sides of \eqref{eq:lip-acc-desent-3} by $\alpha_t^2$ and using $\alpha_t\ge \frac{t+1}{2}$ yields
\[
    \ebb[\Delta_{t+1}] \lesssim \frac{1+\sigma}{\sqrt{t}} \sqrt{\log \big((1+\sigma^2) t^3\big)},
\]
which completes the proof.
\end{proof}

By setting $\sigma=0$ in Theorem \ref{thm:cvx-lip-acc}, we can obtain a convergence rate $O\big(\sqrt{\log t/t}\big)$ for Algorithm \ref{Alg:acc-adaprox} in the deterministic setting, as summarized in the following corollary.
\begin{corollary}\label{cor:lip-acc-deter}
Suppose $f$ is convex and $G$-Lipschitz continuous, and Assumptions \ref{ass:bd-stoch} and \ref{ass:z-bd} hold. Let $\{x_k, y_k, z_k\}_{k\ge 1}$ be the iterates generated by Algorithm \ref{Alg:acc-adaprox} with full gradient $g_k=\nabla f(x_k)$, $\gamma>0$, and $\eta> \sqrt{2}D/2$. Then for any $t \ge 1$, it holds that
\[
F(y_{t+1})-F^* \lesssim \frac{\sqrt{\log t}}{\sqrt{t}}.
\]
\end{corollary}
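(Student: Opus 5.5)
This corollary is the special case $\sigma=0$ of Theorem~\ref{thm:cvx-lip-acc}, and I would prove it by re-running that proof in the deterministic setting. With the full gradient $g_k=\nabla f(x_k)$ we have $\delta_k\equiv 0$, and every iterate is deterministic. So all expectations disappear, and the two stochastic inner-product sums $\sum_k\alpha_k\langle\delta_k,z_{k+1}-z_k\rangle$ and $\sum_k\alpha_k\langle\delta_k,z_k-x^*\rangle$ in \eqref{eq:lip-acc-desent-1} vanish identically. Assumption~\ref{ass:bd-stoch} is then trivially satisfied with any $\sigma$, in particular in the limit $\sigma\to 0$. Lemma~\ref{lem:stoc-ip-sum-acc} is not needed at all.

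First, I start from Lemma~\ref{lem:F-descent-cvx-acc}(i) with $x=x^*$ and $\delta_k=0$. Exactly as in the derivation of \eqref{eq:lip-acc-desent-1}, I multiply by $\alpha_t^2$, use \eqref{eq:alpha-property}, telescope, and apply \eqref{eq:bound-z-diff}, \eqref{eq:acc-lip-sum1} and \eqref{eq:acc-lip-sum2}. This gives
\[
\alpha_t^2\Delta_{t+1}+\Big(\eta-\frac{D^2}{2\eta}\Big)S_{t+1}\le \alpha_0^2\Delta_1+\gamma\eta+2G\sqrt{2(\eta^2+D^2)\textstyle\sum_{k=1}^t\alpha_k^2}\,\sqrt{\log(S_{t+1}/\gamma)} .
\]
Second, since $\eta>\sqrt2 D/2$ we have $\rho:=1-D^2/(2\eta^2)>0$. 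Dropping $\alpha_t^2\Delta_{t+1}\ge0$ and dividing by $\gamma\eta\rho$ gives an inequality of the form $S_{t+1}/\gamma\le a\sqrt{\log(S_{t+1}/\gamma)}+b$ with $a\lesssim\sqrt{\sum_k\alpha_k^2}\lesssim t^{3/2}$. Lemma~\ref{lem:sqrt-log} then yields $S_{t+1}\lesssim t^{3/2}\sqrt{\log t^3}$, and hence $\log(S_{t+1}/\gamma)\lesssim \log t$ (with an additive constant absorbed into the $\lesssim$, say for $t\ge2$; the case $t=1$ is handled by constants).

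Third, I drop the nonnegative term $\rho\eta S_{t+1}$ on the left and plug the bound on $\log S_{t+1}$ back in:
\[
\alpha_t^2\Delta_{t+1}\lesssim 1+t^{3/2}\sqrt{\log t}.
\]
Dividing by $\alpha_t^2\ge (t+1)^2/4$ gives $F(y_{t+1})-F^*\lesssim \sqrt{\log t}/\sqrt{t}$.

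There is no real obstacle here. The only point needing care is the bookkeeping of the logarithmic factor: I must check that $\log\big(t^{3/2}\sqrt{\log t^3}\big)\lesssim\log t$, which follows from $\log u\le u-1$. I must also note that the constant $1$ is dominated by $t^{3/2}\sqrt{\log t}$ for $t\ge 2$.
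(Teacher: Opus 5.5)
Your proposal is correct and is essentially the paper's argument: the paper obtains this corollary by setting $\sigma=0$ in Theorem~\ref{thm:cvx-lip-acc}, and rerunning that proof with $\delta_k\equiv 0$ (so that Lemma~\ref{lem:stoc-ip-sum-acc} and the Jensen steps drop out) amounts to exactly that. One small caveat: at $t=1$ the right-hand side equals $\sqrt{\log 1}=0$, so that case cannot be ``handled by constants''; the bound has to be read as $\sqrt{\log(t+1)/t}$ (or restricted to $t\ge 2$), a looseness already present in the paper's statement and in the $\sigma=0$ case of the theorem.
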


\subsection{Smooth Case}
In this subsection, we proceed to investigate the convergence of Algorithm \ref{Alg:acc-adaprox} when $f$ is convex and $L$-smooth. We show in the upcoming theorem that, given knowledge of the diameter $D$ in Assumption \ref{ass:z-bd}, Algorithm \ref{Alg:acc-adaprox} achieves a near-optimal convergence rate. This rate can be significantly faster than that of Algorithm~\ref{Alg:adaprox}, particularly when the noise level $\sigma^2$ is small.

\begin{theorem}\label{thm:smooth-acc}
    Suppose $f$ is convex and $L$-smooth, and Assumptions \ref{ass:bd-stoch} and \ref{ass:z-bd} hold. Let $\{x_k, y_k, z_k\}_{k\ge 1}$ be the iterates generated by Algorithm \ref{Alg:acc-adaprox} with unit batch sizes $b_k\equiv 1$, $\gamma>0$, and $\eta> \sqrt{2}D/2$. Then for any $t \ge 1$, we have
\[
\ebb[F(y_{t+1})-F^*] \lesssim \frac{1+\log \big(1+\sigma t^{3/2}\big)}{t^2}+\frac{\sigma\log \big(1+\sigma t^{3/2}\big)}{\sqrt{t}}.
\]
\end{theorem}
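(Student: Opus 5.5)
Starting from \eqref{eq:descent-smooth-acc} with $x=x^*$, I will multiply by $\alpha_t^2$, use $\alpha_t\theta_t=1$ and $\alpha_t^2-\alpha_t=\alpha_{t-1}^2$ from \eqref{eq:alpha-property}, telescope over $k=1,\dots,t$, and bound the distance terms by \eqref{eq:bound-z-diff}. This gives
\begin{equation*}
\alpha_t^2\Delta_{t+1}\le \alpha_0^2\Delta_1+\frac{D^2}{2\eta}S_{t+1}+\frac{\eta^2}{2}\sum_{k=1}^t\Big(L-\frac{S_k}{\eta}\Big)\frac{\|\widetilde{\gcal}_{z,k}\|^2}{S_k^2}+\sum_{k=1}^t\alpha_k\langle\delta_k,z_{k+1}-z_k\rangle+\sum_{k=1}^t\alpha_k\langle\delta_k,z_k-x^*\rangle .
\end{equation*}
I will split the middle sum into its positive part $\frac{L\eta^2}{2}\sum\|\widetilde{\gcal}_{z,k}\|^2/S_k^2$ and its negative part $-\frac{\eta}{2}\sum\|\widetilde{\gcal}_{z,k}\|^2/S_k$. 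By \eqref{eq:sqrt-bound} the negative part is at most $-\eta(S_{t+1}-\gamma)$. Since $\eta>\sqrt2 D/2$, this absorbs $\frac{D^2}{2\eta}S_{t+1}$ and leaves $-\rho\eta S_{t+1}$ with $\rho=1-D^2/(2\eta^2)>0$, as in the Lipschitz proof.

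For the positive part I will use a threshold argument in the spirit of \eqref{eq:stoch-smooth-sum-L}, but without discarding the negative term. I will keep half of $-\frac{\eta}{2}\sum\|\widetilde{\gcal}_{z,k}\|^2/S_k$ and note that for $k$ with $S_k\ge 2L\eta$, the term $\frac{L\eta^2}{2}\|\widetilde{\gcal}_{z,k}\|^2/S_k^2$ is dominated by $\frac{\eta}{4}\|\widetilde{\gcal}_{z,k}\|^2/S_k$. The remaining indices have $S_k<2L\eta$, and by \eqref{eq:log-bound} together with $\|\widetilde{\gcal}_{z,k}\|\le DS_k/\eta$, their contribution is at most a constant $[L(\eta^2+D^2)\log(2L\eta/\gamma)+LD^2/2]_+$. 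The other half of the negative term still yields $-\frac{\eta}{2}(S_{t+1}-\gamma)$. The resulting coefficient is $\eta/2-D^2/(2\eta)$, which is positive only if $\eta>D$, so the split needs care. As a fallback I will keep the full negative term and absorb the positive part by the crude bound $\frac{L\eta^2}{2}\cdot\frac{1}{S_k}\cdot\frac{\|\widetilde{\gcal}_{z,k}\|^2}{S_k}$. This works once $S_k$ exceeds a constant, and that happens at most at the beginning of the run. The outcome should be
\begin{equation*}
\alpha_t^2\Delta_{t+1}+\rho'\eta S_{t+1}\le C_0+\sum_{k=1}^t\alpha_k\langle\delta_k,z_{k+1}-z_k\rangle+\sum_{k=1}^t\alpha_k\langle\delta_k,z_k-x^*\rangle ,
\end{equation*}
where $C_0$ depends only on $L,\eta,\gamma,D,\Delta_1$ and $\rho'>0$. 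Making this step compatible with the weaker requirement $\eta>\sqrt2D/2$ is the main obstacle. I will try to set up the threshold so that the entire $-\eta S_{t+1}$ survives up to an additive constant, and hope the coefficient bookkeeping closes.

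Next I will take expectations. The martingale term $\alpha_k\langle\delta_k,z_k-x^*\rangle$ vanishes, and Lemma~\ref{lem:stoc-ip-sum-acc} with $\sum_{k\le t}\alpha_k^2\lesssim t^3$ (from \eqref{eq:alpha-property}) gives
\begin{equation*}
\alpha_t^2\,\ebb[\Delta_{t+1}]+\rho'\eta\,\ebb[S_{t+1}]\lesssim 1+\sigma t^{3/2}\sqrt{\log\ebb[S_{t+1}/\gamma]} .
\end{equation*}
Dropping $\Delta_{t+1}\ge0$, I will use $\sqrt{x}\le\frac12(1+x)$ and Lemma~\ref{lem:log} to obtain $\ebb[S_{t+1}]\lesssim 1+(1+\sigma t^{3/2})\log(1+\sigma t^{3/2})$, exactly as in \eqref{eq:stoch-S-final}. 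Substituting this back, the right side becomes $\lesssim 1+\sigma t^{3/2}\log(1+\sigma t^{3/2})$. Dividing by $\alpha_t^2\ge(t+1)^2/4$ then gives the stated bound $\frac{1+\log(1+\sigma t^{3/2})}{t^2}+\frac{\sigma\log(1+\sigma t^{3/2})}{\sqrt t}$. The extra $\log$ in the $1/t^2$ term is harmless, since it comes from bounding the constant-order terms crudely.
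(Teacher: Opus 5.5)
Your overall architecture is sound, but the step everything rests on, $\alpha_t^2\Delta_{t+1}+\rho'\eta S_{t+1}\le C_0+\sum_{k}\alpha_k\langle\delta_k,z_{k+1}-z_k\rangle+\sum_k\alpha_k\langle\delta_k,z_k-x^*\rangle$, is never actually established. The split you describe sacrifices half of the negative term, with threshold $S_k\ge 2L\eta$. That leaves the coefficient $\eta/2-D^2/(2\eta)$, which is nonpositive for $\eta\in(\sqrt{2}D/2,D]$, so as written the argument only covers $\eta>D$. Your fallback is left unquantified. Moreover, keeping the entire coefficient on $S_{t+1}$ with only a constant remainder is not attainable. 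By $\log(1+u)\le u$, the positive part $\frac{L\eta^2}{2}\sum_{k=1}^t\|\widetilde{\gcal}_{z,k}\|^2/S_k^2$ is always at least $L\eta^2\log(S_{t+1}/\gamma)$. So either a fraction of the coefficient must be given up, or a $\log S_{t+1}$ term must remain.

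The missing observation is that a $\log S_{t+1}$ term is harmless. You had already extracted $-\rho\eta S_{t+1}+\gamma\eta$ from the full negative term. Now bound the positive part over all $k$ by \eqref{eq:log-bound}, which gives $L(\eta^2+D^2)\log(S_{t+1}/\gamma)$. After taking expectations and using concavity of $\log$, you get an inequality of the form $x\le a\log x+b$ for $x=\ebb[S_{t+1}/\gamma]$, once the $\sqrt{\log}$ noise term is linearized by $\sqrt{x}\le\frac12(1+x)$. This is exactly what Lemma~\ref{lem:log}, which you already plan to invoke, solves, and it is how the paper bounds $\ebb[S_{t+1}]$. The paper then bounds $\alpha_t^2\ebb[\Delta_{t+1}]$ by a separate threshold argument at $L\eta$ that discards the whole negative term, leaving $\frac{D^2}{2\eta}\ebb[S_{t+1}]$ on the right-hand side. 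If you want your single combined inequality instead, it follows from the logarithmic form: $L(\eta^2+D^2)\log(s/\gamma)-\frac{\rho\eta}{2}s$ is bounded above by a constant over $s>0$, which gives $\rho'=\rho/2$. Equivalently, place your threshold at $S_k\ge 2L\eta/\rho$, so that only a fraction $\rho/2$ of the negative term is sacrificed rather than half. Repaired this way, your route is marginally cleaner than the paper's, since $\ebb[S_{t+1}]$ then enters the bound on $\ebb[\Delta_{t+1}]$ only through $\sqrt{\log\ebb[S_{t+1}/\gamma]}$.
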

\begin{proof}[Proof of Theorem \ref{thm:smooth-acc}]
We start by plugging $x=x^*$ in \eqref{eq:descent-smooth-acc} to obtain
\begin{equation*}
\begin{aligned}
\Delta_{t+1} \leq & \big(1-\theta_t\big) \Delta_t+\theta_t\langle\delta_t, z_{t+1}-z_t\rangle+\theta_t\langle\delta_t, z_t-x^*\rangle \\
& +\frac{\theta_t^2}{2 \eta_t}\big(\|z_t-x^*\|^2-\|z_{t+1}-x^*\|^2\big)+\frac{\theta_t^2}{2}\Big(L-\frac{1}{\eta_t}\Big)\|z_{t+1}-z_t\|^2.
\end{aligned}
\end{equation*}
Multiplying both sides of the above inequality by $\alpha_t^2$, together with the equality in \eqref{eq:alpha-property} leads to
\begin{equation*}
\begin{aligned}
\alpha_t^2 \Delta_{t+1} \leq & \alpha_{t-1}^2 \Delta_t+\frac{1}{2}\Big(L-\frac{1}{\eta_t}\Big)\|z_{t+1}-z_t\|^2+\frac{1}{2 \eta_t}\big(\|z_t-x^*\|^2-\|z_{t+1}-x^*\|^2\big) \\
& +\alpha_t\langle\delta_t, z_{t+1}-z_t\rangle+\alpha_t\langle\delta_t, z_t-x^*\rangle,
\end{aligned}
\end{equation*}
applying which recursively and noting $\|z_{t+1}-z_t\|=\eta_t\|\widetilde{\gcal}_{z,t}\|=\eta\|\widetilde{\gcal}_{z,t}\|/S_t$ and \eqref{eq:bound-z-diff} further yields
\begin{equation}\label{eq:decent-acc-smooth-1}
    \alpha_t^2 \Delta_{t+1} \leq \alpha_0^2 \Delta_1+\frac{\eta^2}{2} \sum_{k=1}^t\Big(L-\frac{1}{\eta_k}\Big) \frac{\|\widetilde{\gcal}_{z,k}\|^2}{S_k^2}+\frac{D^2}{2 \eta}S_{t+1}+\sum_{k=1}^t \alpha_k\langle\delta_k, z_{k+1}-z_k\rangle+\sum_{k=1}^t \alpha_k\langle\delta_k, z_k-x^*\rangle.
\end{equation}
To bound the first summation term on the RHS of \eqref{eq:decent-acc-smooth-1}, we consider two cases. Firstly, if $S_1:=\gamma> L\eta$, then it follows that $1/\eta_k=S_k/\eta\ge S_1/\eta>L$ for all $k\ge 1$, which implies that
\[
\frac{\eta^2}{2}\sum_{k=1}^t\Big(L-\frac{1}{\eta_k}\Big)\frac{\|\widetilde{\gcal}_{z, k}\|^2}{S_k^2}\leq 0.
\]
Otherwise if $\gamma\leq L\eta$, then $t^* := \max\{k \in [t] : L \geq 1/\eta_k\}$ is well-defined. Since $\eta_k=\eta/S_k$ is non-increasing, we have $L \geq S_k/\eta$ for all $1\leq k \leq t^*$ and hence $S_{t^*} \leq L\eta$. For $k > t^*$, we know from the definition of $t^*$ that $L\leq 1/\eta_k$. Therefore, by applying \eqref{eq:log-bound} in Lemma \ref{lem:summation} and noting that $\|\widetilde{\gcal}_{z, k}\|^2=\|z_{k+1}-z_k\|^2/\eta_k^2\leq D^2S_k^2/\eta^2$ for all $k\ge 1$, we obtain
\begin{equation*}
\begin{aligned}
& \frac{\eta^2}{2} \sum_{k=1}^t\Big(L-\frac{1}{\eta_k}\Big) \frac{\|\widetilde{\gcal}_{z,k}\|^2}{S_k^2} \leq \frac{L \eta^2}{2}\Big(\sum_{k=1}^{t^*-1} \frac{\|\widetilde{\gcal}_{z,k}\|^2}{S_k^2}+\frac{\|\widetilde{\gcal}_{z,t^*}\|^2}{S_{t^*}^2}\Big) \\
\leq& \frac{L \eta^2}{2}\bigg[\Big(1+\frac{D^2}{\eta^2}\Big) \log \frac{S_{t^*}^2}{\gamma^2}+\frac{D^2}{\eta^2}\bigg] \leq L\big(\eta^2+D^2\big) \log \frac{L \eta}{\gamma}+\frac{L D^2}{2}.
\end{aligned}
\end{equation*}
Combining the above two cases, we have
\begin{equation}\label{eq:smooth-acc-sum1}
    \frac{\eta^2}{2}\sum_{k=1}^t\Big(L-\frac{1}{\eta_k}\Big)\frac{\|\widetilde{\gcal}_{z, k}\|^2}{S_k^2} \leq \Big[L(\eta^2+D^2) \log \frac{L \eta}{\gamma}+\frac{L D^2}{2}\Big]_+.
\end{equation}
Taking expectations on both sides of \eqref{eq:decent-acc-smooth-1}, and applying $\ebb_k[\langle\delta_k, z_k-x^*\rangle]=0$ for all $k\ge 1$, Lemma \ref{lem:stoc-ip-sum-acc}, and the bound \eqref{eq:smooth-acc-sum1}, we obtain
\begin{equation}\label{eq:smooth-acc-descent}
\begin{aligned}
\alpha_t^2 \ebb[\Delta_{t+1}] \leq & \alpha_0^2 \Delta_1+\Big[L(\eta^2+D^2) \log \frac{L \eta}{\gamma}+\frac{L D^2}{2}\Big]_++\frac{D^2\gamma}{2 \eta} \ebb\bigg[\frac{S_{t+1}}{\gamma}\bigg] \\
& +\sigma \sqrt{2(\eta^2+D^2) \sum_{k=1}^t \alpha_k^2} \sqrt{\log \ebb\bigg[\frac{S_{t+1}}{\gamma}\bigg]}.
\end{aligned}
\end{equation}

To proceed, we bound $\ebb\big[\frac{S_{t+1}}{\gamma}\big]$. To this end, we rearrange \eqref{eq:decent-acc-smooth-1} and apply Lemma \ref{lem:summation} to derive
\begin{equation*}
\begin{aligned}
& \Big(\eta-\frac{D^2}{2 \eta}\Big)S_{t+1}-\gamma\eta=\eta\big(S_{t+1}-\gamma\big)-\frac{D^2}{2 \eta}S_{t+1} \leq \frac{\eta}{2} \sum_{k=1}^t \frac{\|\widetilde{\gcal}_{z,k}\|^2}{S_k}-\frac{D^2}{2 \eta}S_{t+1} \\
\leq & \alpha_0^2 \Delta_1-\alpha_t^2 \Delta_{t+1}+\frac{L\eta^2}{2} \sum_{k=1}^t \frac{\|\widetilde{\gcal}_{z,k}\|^2}{S_k^2}+\sum_{k=1}^t \alpha_k\langle\delta_k, z_{k+1}-z_k\rangle+\sum_{k=1}^t \alpha_k\langle\delta_k, z_k-x^*\rangle \\
\leq & \alpha_0^2 \Delta_1+\frac{L \eta^2}{2}\Big(1+\frac{D^2}{\eta^2}\Big) \log \frac{S_{t+1}^2}{\gamma^2}+\sum_{k=1}^t \alpha_k\langle\delta_k, z_{k+1}-z_k\rangle+\sum_{k=1}^t \alpha_k\langle\delta_k, z_k-x^*\rangle
\end{aligned}
\end{equation*}
Since $\eta>\sqrt{2} D/2$, it follows that $\rho:=1-\frac{D^2}{2 \eta^2}\in (0, 1)$. Taking expectations on both sides of the above inequality and dividing by $\gamma\eta\rho$ yields
\begin{equation*}
\begin{aligned}
& \ebb\bigg[\frac{S_{t+1}}{\gamma}\bigg] \leq \frac{1}{\rho}+\frac{\alpha_0^2 \Delta_1}{\gamma \eta \rho}+\frac{L(\eta^2+D^2)}{\gamma \eta \rho} \ebb\bigg[\log \frac{S_{t+1}}{\gamma}\bigg]+\frac{\sigma}{\gamma\eta\rho} \sqrt{2(\eta^2+D^2) \sum_{k=1}^t \alpha_k^2} \sqrt{\log \ebb\bigg[\frac{S_{t+1}}{\gamma}\bigg]} \\
\leq & \frac{1}{\rho}+\frac{\alpha_0^2 \Delta_1}{\gamma \eta \rho}+\frac{L(\eta^2+D^2)}{\gamma \eta \rho}\log \ebb\bigg[\frac{S_{t+1}}{\gamma}\bigg]+\frac{\sigma}{2 \gamma\eta \rho} \sqrt{2(\eta^2+D^2) \sum_{k=1}^t \alpha_k^2}\bigg(1+\log \ebb\bigg[\frac{S_{t+1}}{\gamma}\bigg]\bigg) \\
= & \frac{1}{\rho}+\frac{\alpha_0^2 \Delta_1}{\gamma \eta \rho}+\frac{\sigma}{2 \gamma \eta \rho} \sqrt{2(\eta^2+D^2) \sum_{k=1}^t \alpha_k^2}+\bigg(\frac{L(\eta^2+D^2)}{\gamma \eta \rho}+\frac{\sigma}{2 \gamma \eta \rho} \sqrt{2(\eta^2+D^2) \sum_{k=1}^t \alpha_k^2}\bigg) \log \ebb\bigg[\frac{S_{t+1}}{\gamma}\bigg],
\end{aligned}
\end{equation*}
where the second inequality follows from concavity of $\log(\cdot)$ and the basic inequality $\sqrt{x}\leq \frac{1}{2}(1+x)$. Solving the above inequality using Lemma \ref{lem:log} and noting $\alpha_k\leq k+1$ for all $k\ge 1$ gives
\begin{align}
\ebb\bigg[\frac{S_{t+1}}{\gamma}\bigg] \leq & \bigg(\frac{2 L\big(\eta^2+D^2\big)}{\gamma \eta \rho}+\frac{\sigma}{\gamma \eta\rho} \sqrt{2(\eta^2+D^2) \sum_{k=1}^t \alpha_k^2}\bigg) \log \bigg(\frac{2L(\eta^2+ D^2)}{\gamma \eta \rho}+\frac{\sigma}{\gamma \eta\rho} \sqrt{2(\eta^2+D^2) \sum_{k=1}^t \alpha_k^2}\bigg) \notag \\
& +\frac{2}{\rho}+\frac{2 \alpha_0^2 \Delta_1}{\gamma \eta \rho}- \frac{2 L\big(\eta^2+D^2\big)}{\gamma \eta \rho}\notag \\
\lesssim & 1+\Bigg(1+\sigma\sqrt{\sum_{k=1}^t \alpha_k^2}\Bigg)\log \bigg(1+\sigma \sqrt{\sum_{k=1}^t \alpha_k^2}\bigg)\notag\\
\lesssim & 1+\big(1+\sigma t^{3/2}\big)\log \big(1+\sigma t^{3/2}\big) \label{eq:S-final-smooth-acc}.
\end{align}

Finally, applying \eqref{eq:S-final-smooth-acc} on \eqref{eq:smooth-acc-descent} yields
\begin{equation}\label{eq:smooth-acc-descent-final}
\begin{aligned}
&\alpha_t^2 \ebb[\Delta_{t+1}] \lesssim 1+ \ebb\bigg[\frac{S_{t+1}}{\gamma}\bigg]+\sigma\sqrt{\sum_{k=1}^t\alpha_k^2}\sqrt{\log \ebb\bigg[\frac{S_{t+1}}{\gamma}\bigg]}\\
\lesssim & 1+\big(1+\sigma t^{3/2}\big)\log \big(1+\sigma t^{3/2}\big)+\sigma t^{3/2} \sqrt{\log \bigg(1+\big(1+\sigma t^{3/2}\big)\log \big(1+\sigma t^{3/2}\big)\bigg)} \\
\lesssim & 1+\log \big(1+\sigma t^{3/2}\big)+\sigma t^{3/2}\log \big(1+\sigma t^{3/2}\big).
\end{aligned}
\end{equation}
Dividing both sides of \eqref{eq:smooth-acc-descent-final} by $\alpha_t^2$ and using $\alpha_t\ge \frac{t+1}{2}$ yields
\[
\ebb[\Delta_{t+1}] \lesssim \frac{1+\log \big(1+\sigma t^{3/2}\big)}{t^2}+\frac{\sigma\log \big(1+\sigma t^{3/2}\big)}{\sqrt{t}},
\]
which completes the proof.
\end{proof}

By setting $\sigma=0$ in Theorem \ref{thm:smooth-acc}, we can obtain an optimal convergence rate $O\big(1/t^2\big)$ for Algorithm \ref{Alg:acc-adaprox} in the deterministic setting, as summarized in the following corollary.
\begin{corollary}\label{cor:smooth-acc-deter}
Suppose $f$ is convex and $L$-smooth, and Assumptions \ref{ass:bd-stoch} and \ref{ass:z-bd} hold. Let $\{x_k, y_k, z_k\}_{k\ge 1}$ be the iterates generated by Algorithm \ref{Alg:acc-adaprox} with full gradient $g_k=\nabla f(x_k)$, $\gamma>0$, and $\eta> \sqrt{2}D/2$. Then for any $t \ge 1$, it holds that
\[
F(y_{t+1})-F^* \lesssim \frac{1}{t^2}.
\]
\end{corollary}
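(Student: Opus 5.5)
This follows by specializing the proof of Theorem~\ref{thm:smooth-acc} to $\sigma=0$ and tracking constants. With full gradients $g_k=\nabla f(x_k)$ we have $\delta_k\equiv 0$ and $S_{t+1}$ is deterministic. The two noise sums in \eqref{eq:decent-acc-smooth-1}, $\sum_k\alpha_k\langle\delta_k,z_{k+1}-z_k\rangle$ and $\sum_k\alpha_k\langle\delta_k,z_k-x^*\rangle$, therefore vanish identically. We do not need Lemma~\ref{lem:stoc-ip-sum-acc} or any expectation argument. Assumption~\ref{ass:bd-stoch} is only used vacuously.

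The argument has three steps.

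\textbf{Step 1.} Starting from \eqref{eq:decent-acc-smooth-1} with $\delta_k=0$, apply the two-case bound \eqref{eq:smooth-acc-sum1}. This gives
\[
\alpha_t^2\Delta_{t+1}\le \alpha_0^2\Delta_1+\Big[L(\eta^2+D^2)\log\tfrac{L\eta}{\gamma}+\tfrac{LD^2}{2}\Big]_+ +\tfrac{D^2}{2\eta}S_{t+1}.
\]
Since $\alpha_0=0$, the first term drops.

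\textbf{Step 2.} Bound $S_{t+1}$ by a constant independent of $t$. Repeat the rearrangement used in the proof of Theorem~\ref{thm:smooth-acc}: use \eqref{eq:sqrt-bound}, keep the negative term $-\frac{\eta}{2}\sum_k\|\widetilde{\gcal}_{z,k}\|^2/S_k$, and drop $-\alpha_t^2\Delta_{t+1}\le 0$. This yields
\[
\rho\eta S_{t+1}\le \gamma\eta+L(\eta^2+D^2)\log\frac{S_{t+1}}{\gamma},\qquad \rho=1-\frac{D^2}{2\eta^2}\in(0,1).
\]
Here $\rho\in(0,1)$ is exactly where $\eta>\sqrt2 D/2$ is needed. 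Dividing by $\gamma\eta\rho$ and solving with Lemma~\ref{lem:log} (using $a=L(\eta^2+D^2)/(\gamma\eta\rho)$ and $b=1/\rho$) gives $S_{t+1}\lesssim 1$, with a constant depending only on $L,\eta,D,\gamma$. Equivalently, this is \eqref{eq:S-final-smooth-acc} at $\sigma=0$.

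\textbf{Step 3.} Substitute the bound on $S_{t+1}$ into Step 1 to get $\alpha_t^2\Delta_{t+1}\lesssim 1$. Then use $\alpha_t\ge (t+1)/2$ from \eqref{eq:alpha-property}, which gives $F(y_{t+1})-F^*\lesssim 1/(t+1)^2\le 1/t^2$.

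No step is a genuine obstacle. The only point needing care is Step 2: we must check that the $\log$ term stays independent of $t$. This holds because, without noise, the right-hand side has no $\sum_k\alpha_k^2$ factor. As a result the bound on $S_{t+1}$ is uniform in $t$, rather than of order $t^{3/2}$ as in the stochastic case.
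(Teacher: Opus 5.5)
Your proposal is correct and is essentially the paper's argument. The paper obtains this corollary by setting $\sigma=0$ in Theorem~\ref{thm:smooth-acc}, and your three steps are that proof specialized to $\delta_k\equiv 0$: the two-case bound \eqref{eq:smooth-acc-sum1}, the $t$-independent bound on $S_{t+1}$ via Lemma~\ref{lem:log} with $\rho=1-D^2/(2\eta^2)$, and $\alpha_t\ge (t+1)/2$. Working directly with $\delta_k\equiv 0$ also avoids expectations and sidesteps the formal point that Assumption~\ref{ass:bd-stoch} requires $\sigma>0$.
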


\begin{remark}\label{rmk:nc-alg2}
While we establish improved convergence results of Algorithm \ref{Alg:acc-adaprox} for convex $f$, it is unclear whether it can find stationary points for nonconvex problems similar to Algorithm \ref{Alg:adaprox}. Intuitively, this seems unlikely to happen if we consider the special case $h=0$ and $g_k=\nabla f(x_k)$. Then it follows from \eqref{alg:z-update} that $z_{k+1}=z_k-\alpha_k\eta_k \nabla f(x_k)$, which together with \eqref{eq:S-formula} implies
\[
S_t^2=\gamma^2+\sum_{k=1}^{t-1}\alpha_k^2\|\nabla f(x_k)\|^2.
\]
Suppose that $\sum_{k=1}^t \|\nabla f(x_k)\|^2=O(1)$ (which is the best possible rate for nonconvex and $L$-smooth $f$), then we know from $\alpha_k=O(k)$ that $S_t^2=O(t^2)$ and hence $\eta_t=\eta/S_t=O(1/t)$, which is decreasing and inconsistent with the standard constant step size choice on the order of $1/L$. It is thus interesting to design an algorithm that is both universal and optimal across all three settings (nonconvex smooth, convex nonsmooth, and convex smooth). We leave it as a future direction.
\end{remark}

\section{Experiments}
\label{sec:experiments}
In this section, we conduct numerical experiments on both nonconvex and convex composite problems. To this end, we use the MNIST dataset~\citep{lecun2002gradient} and three real-world datasets from the LIBSVM library~\citep{chang2011libsvm} (summarized in Table~\ref{tab:dataset}). All datasets are preprocessed so that the feature vectors are normalized to have unit $\ell_2$-norm. 
\begin{table}
\centering\addtolength{\tabcolsep}{-0.2em}
\begin{tabular}{|c|c|c|c|c|}
\hline
Dataset & MNIST & a9a & w6a & connect4 \\ \hline
$n$ & 60000 & 32561 & 17188 & 67557 \\ \hline
$d$ & 780 & 123 & 300 & 126 \\ \hline
\end{tabular}
\caption{Summary of datasets used in the experiments. Here, `$n$' stands for the sample size and `$d$' denotes the number of features.}
\label{tab:dataset}
\end{table}

\subsection{Regularized Nonconvex Support Vector Machine}
We start by considering the nonconvex support vector machine (SVM) problem~\citep{wang2017stochastic,wang2023stochastic} with $\ell_1$ regularization and box constraints:
\begin{equation}\label{prob:svm}
\min_{x\in[-R,R]^d} F(x):=\frac{1}{n} \sum_{i=1}^n\left[1-\tanh \left(b_i\left\langle x, a_i\right\rangle\right)\right]+\frac{\mu}{2}\|x\|^2_2+\lambda\|x\|_1,
\end{equation}
where $(a_i, b_i)\in \rbb^d\times \{-1,1\}$ denotes the $i$-th sample with feature vector $a_i\in\rbb^d$ and label $b_i\in\{-1,1\}$. This problem fits into the composite formulation~\eqref{main-prob} by defining
\[
f:=\frac{1}{n} \sum_{i=1}^n\left[1-\tanh \left(b_i\left\langle \cdot, a_i\right\rangle\right)\right]+\frac{\mu}{2}\|\cdot\|^2_2,\quad h := \lambda\|\cdot\|_1 + \iota_{[-R, R]^d}(\cdot),
\]
where $\iota_{[-R, R]^d}(\cdot)$ denotes the indicator function of the set $[-R, R]^d$. Moreover, since $\dom(h)$ is bounded, Assumptions~\ref{ass:seq-bd} and~\ref{ass:z-bd} are satisfied for Algorithms~\ref{Alg:adaprox} and~\ref{Alg:acc-adaprox}, respectively.

For all experiments on \eqref{prob:svm}, we set the diameter parameter to $R=50$ and the regularization parameters to $\mu=\lambda=10^{-3}$. Since the objective function $F$ in \eqref{prob:svm} is nonconvex, we measure the performance of algorithms using the norm of the gradient mapping $\gcal(\cdot):=\gcal_1(\cdot)=x-\p_{h}\big(x-\nabla f(x)\big)$. Furthermore, due to the separability of the $\ell_1$-norm and the box constraints, the proximal operator of $h$ can be computed efficiently by first applying the soft-thresholding operator and then projecting onto the box constraints.

\textbf{Comparison in Deterministic Settings.} We first compare the full-batch implementations of three methods on the MNIST dataset: Algorithm~\ref{Alg:adaprox}, Algorithm~\ref{Alg:acc-adaprox}, and the proximal gradient method~\eqref{Alg:prox-grad}. We convert the dataset into a binary classification task by assigning digits $0$--$4$ to label $-1$ and digits $5$--$9$ to label $+1$. Our goal is to assess the robustness of these methods under different step-size schemes. For Algorithms~\ref{Alg:adaprox} and~\ref{Alg:acc-adaprox}, we fix $\gamma=1$ and vary $\eta \in \{1, 10, 100\}$. For PG, we use constant step sizes $\eta_k \equiv \alpha \in \{0.1/L, 1/L, 10/L\}$ (classical theory suggests that $\alpha=1/L$ is a safe choice \citep{ghadimi2016mini}). Each method is run for 10,000 iterations, using the same initialization for a fair comparison.

Figure~\ref{fig:svm_det} shows the evolution of the gradient mapping norm for all three methods under different step-size choices. We observe that PG converges when the step size is $1/L$ or smaller, but may diverge for larger values such as $10/L$. In contrast, Algorithm~\ref{Alg:adaprox} exhibits fast and robust convergence to stationary points across all tested values of $\eta$, supporting Theorem~\ref{thm:nc-deter} and highlighting the advantage of adaptive step sizes. Moreover, Algorithm~\ref{Alg:acc-adaprox} shows slower decay of the gradient mapping norm across all tested values of $\eta$ compared to Algorithm \ref{Alg:adaprox}, which indicates Algorithm~\ref{Alg:acc-adaprox} may be less effective in the nonconvex setting. This observation is consistent with our conjecture in Remark~\ref{rmk:nc-alg2}.

\begin{figure}[t]
    \centering
    \begin{subfigure}[b]{0.328\textwidth}
        \includegraphics[width=\textwidth]{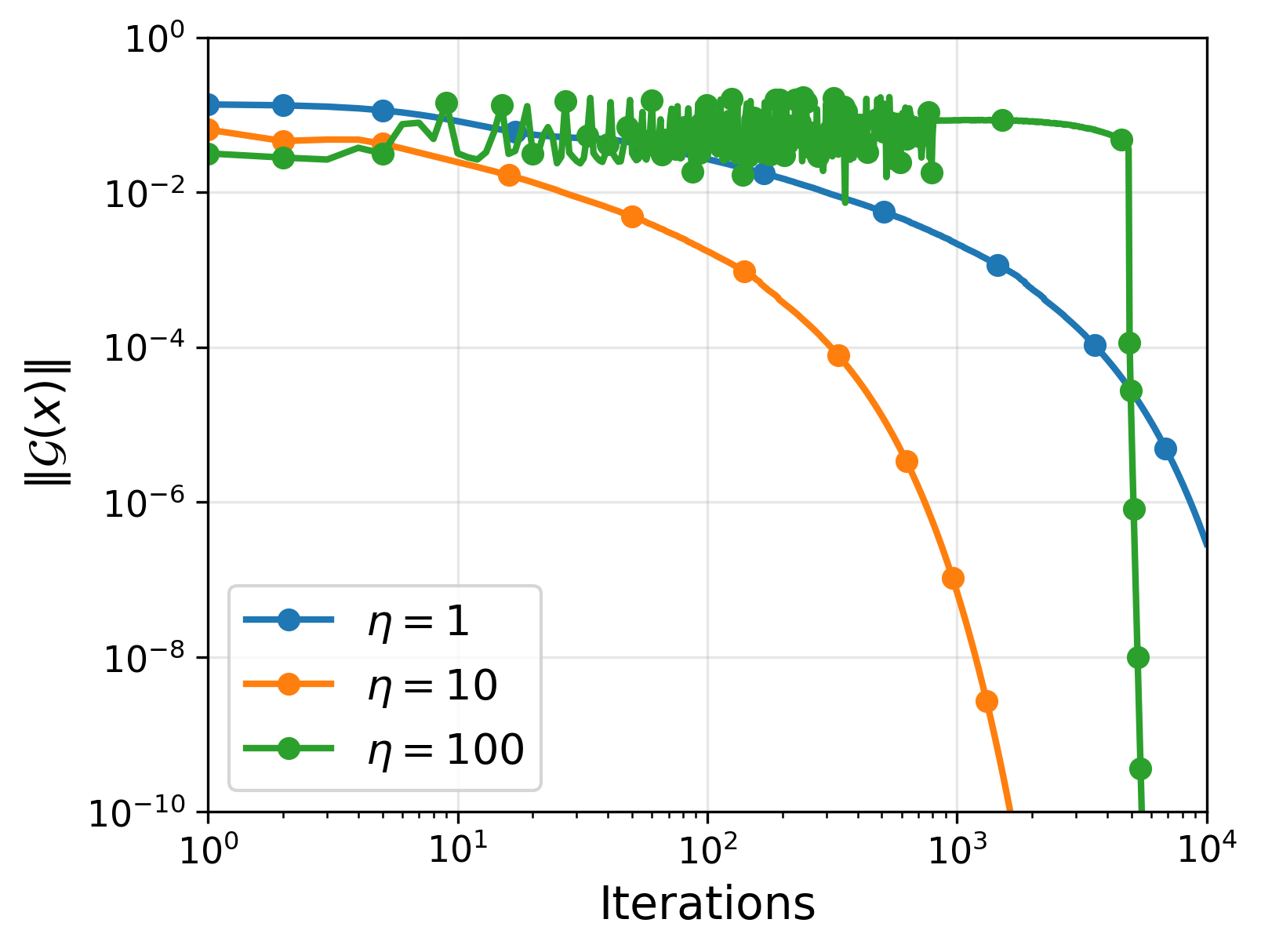}
        \caption{Algorithm \ref{Alg:adaprox}}
        \label{fig:svm-deter-alg1}
    \end{subfigure}
    \begin{subfigure}[b]{0.328\textwidth}
        \includegraphics[width=\textwidth]{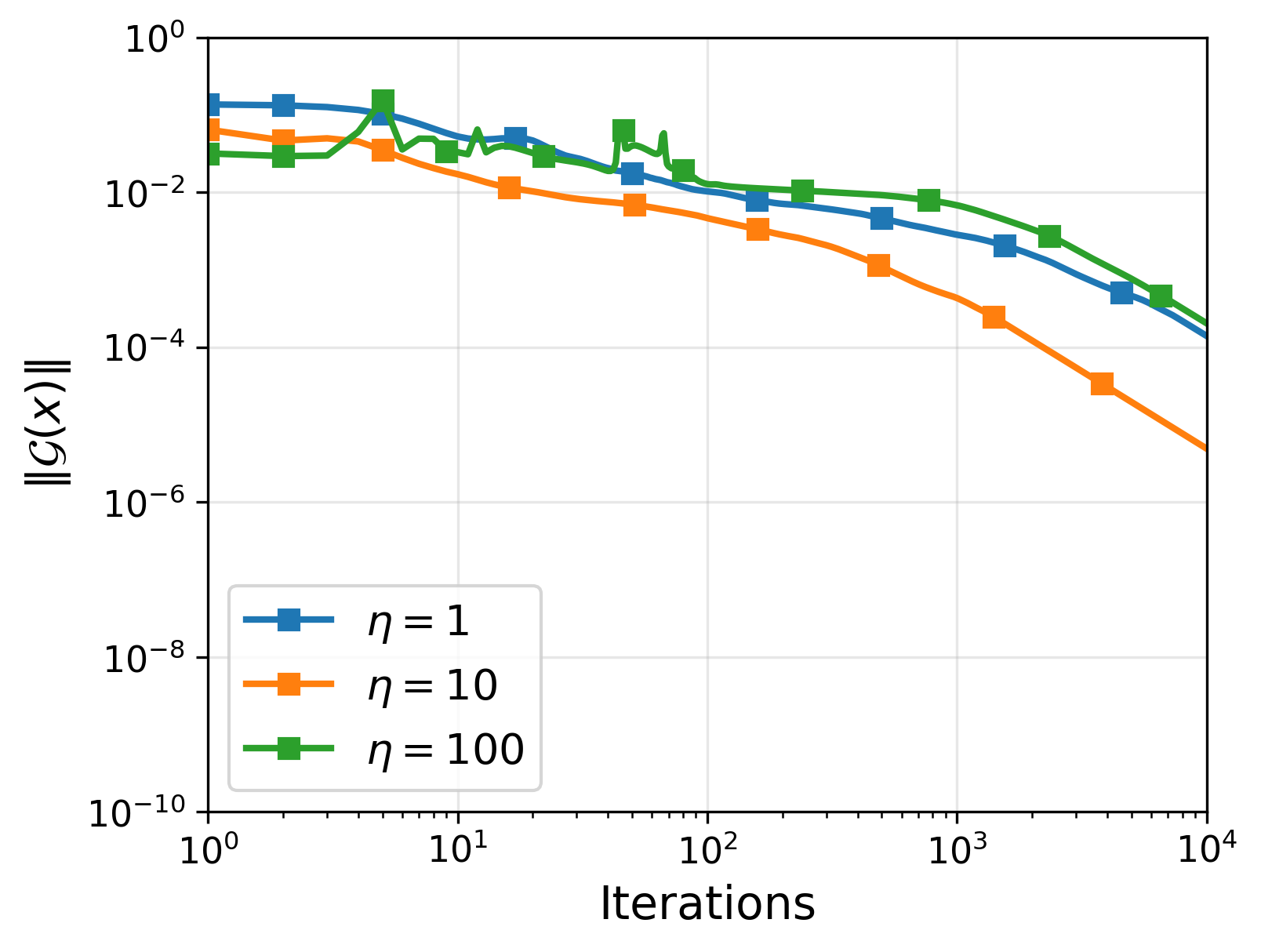}
        \caption{Algorithm \ref{Alg:acc-adaprox}}
        \label{fig:svm-deter-alg2}
    \end{subfigure}
    \begin{subfigure}[b]{0.328\textwidth}
        \includegraphics[width=\textwidth]{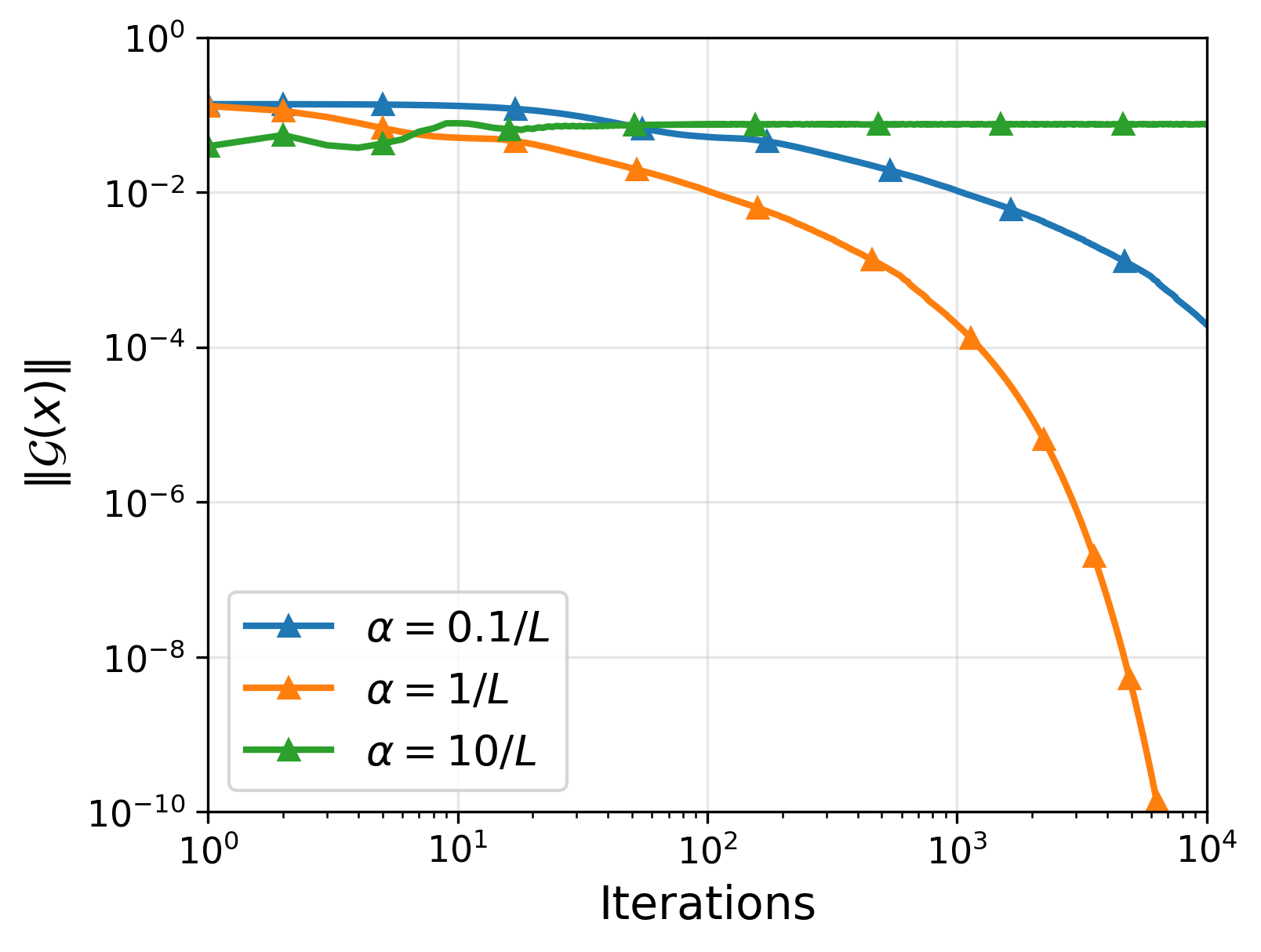}
        \caption{PG}
        \label{fig:svm-deter-pg}
    \end{subfigure}
    \caption{Evolution of $\|\gcal(x)\|$ for solving the nonconvex SVM problem \eqref{prob:svm} on the MNIST dataset in the deterministic setting. Algorithms \ref{Alg:adaprox} and \ref{Alg:acc-adaprox} are evaluated with fixed $\gamma=1$ and three choices of $\eta\in\{1, 10, 100\}$, while the proximal gradient method~\eqref{Alg:prox-grad} is evaluated with constant step sizes $\eta_k\equiv\alpha\in\{0.1/L, 1/L, 10/L\}$, where $L$ is the smoothness constant of $f$.}
    \label{fig:svm_det} 
\end{figure}

\textbf{Effect of Batch Size.} We next investigate the effect of batch size on the performance of Algorithm~\ref{Alg:adaprox} in the stochastic setting. Specifically, we use the a9a dataset and randomly split it into a training set ($80\%$ of the samples) and a testing set (the remaining $20\%$). We fix the hyperparameters $\gamma=1$ and $\eta=10$, and consider five constant batch sizes $b\in\{0.1\%, 1\%, 10\%, 50\%, 100\%\}$ of the training set size. To ensure a fair comparison, we run Algorithm~\ref{Alg:adaprox} with each batch size for 1000 epochs (i.e., full passes over the training data), so that all configurations use the same total number of gradient evaluations.

Figure~\ref{fig:svm_batch} illustrates the evolution of the last-iterate and averaged gradient mapping norms, as well as the test accuracy, for different batch sizes. As shown in plots (a) and (b), both the last-iterate and averaged gradient mapping norms decrease faster (in terms of epochs) when larger batch sizes are used. In contrast, plot (c) indicates that smaller batch sizes yield faster improvements in test accuracy during the early stages of training. This highlights a trade-off between optimization efficiency and generalization performance when selecting the batch size.

\begin{figure}[t]
    \centering
    \begin{subfigure}[b]{0.33\textwidth}
        \includegraphics[width=\textwidth]{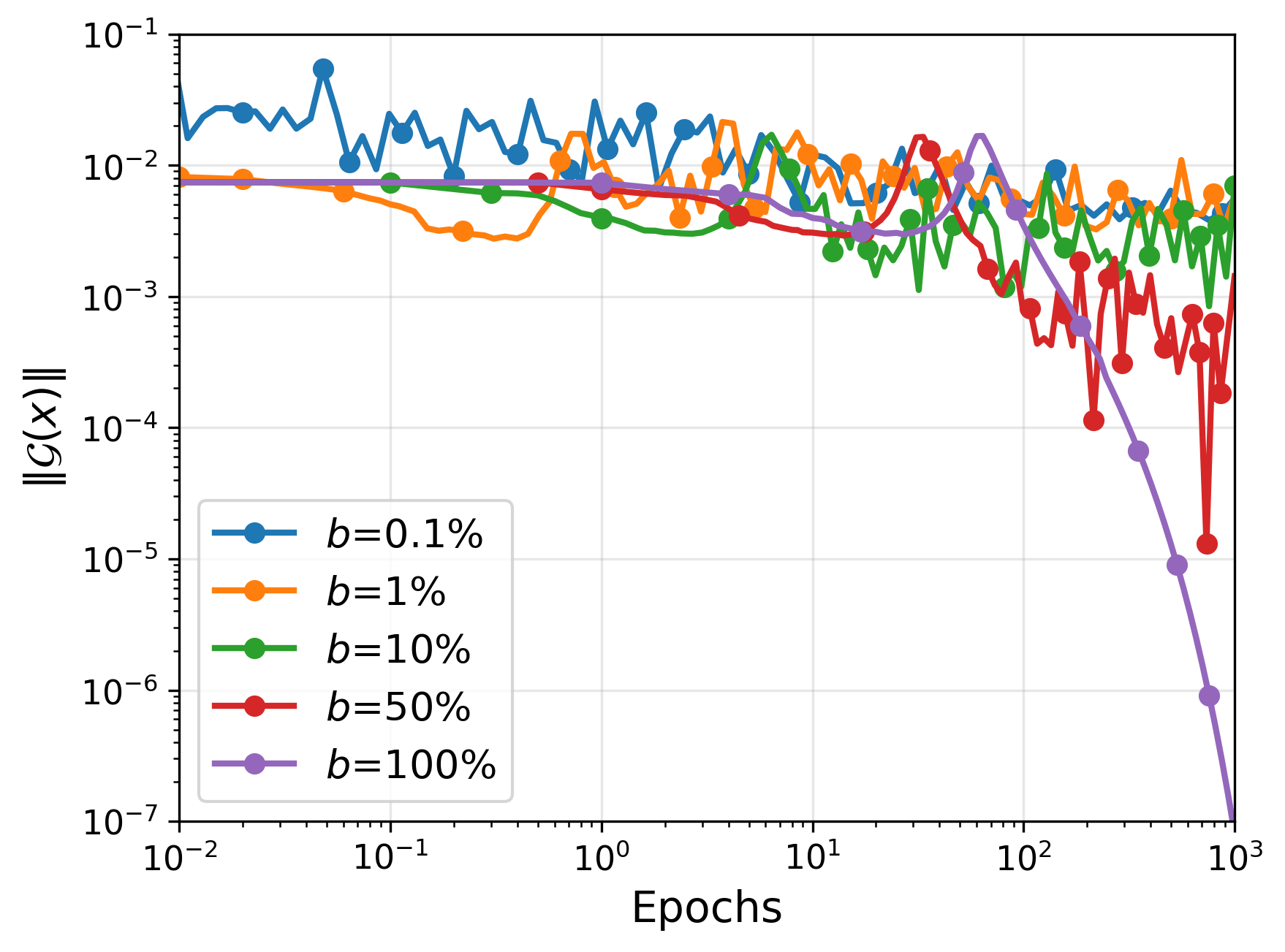}
        \caption{$\|\gcal(x_t)\|$}
        \label{fig:svm_batch_grad_last}
    \end{subfigure}
    \begin{subfigure}[b]{0.325\textwidth}
        \includegraphics[width=\textwidth]{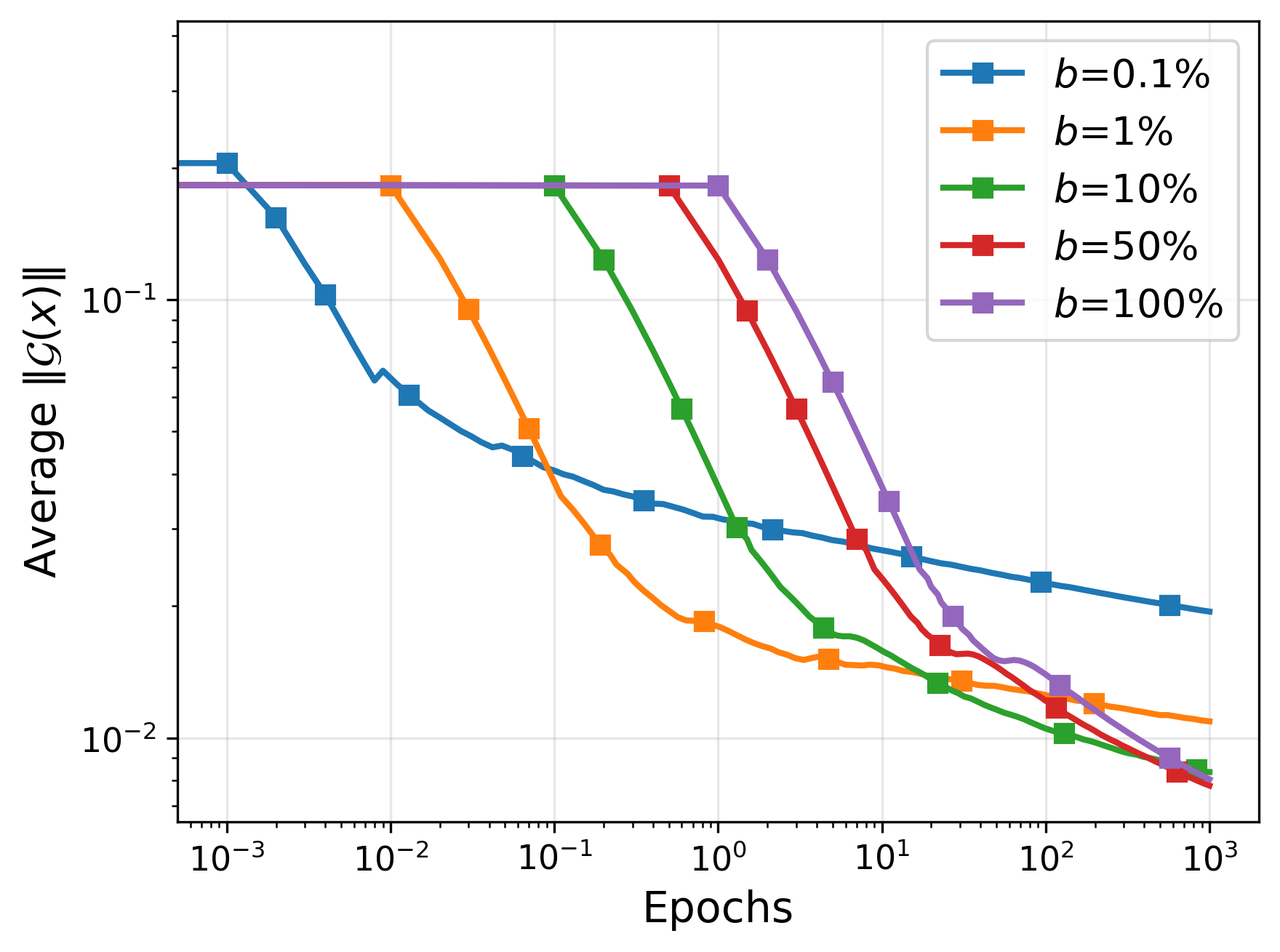}
        \caption{$\frac{1}{t}\sum_{k=1}^t\|\gcal(x_k)\|$}
        \label{fig:svm_batch_grad_ave}
    \end{subfigure}
    \begin{subfigure}[b]{0.33\textwidth}
        \includegraphics[width=\textwidth]{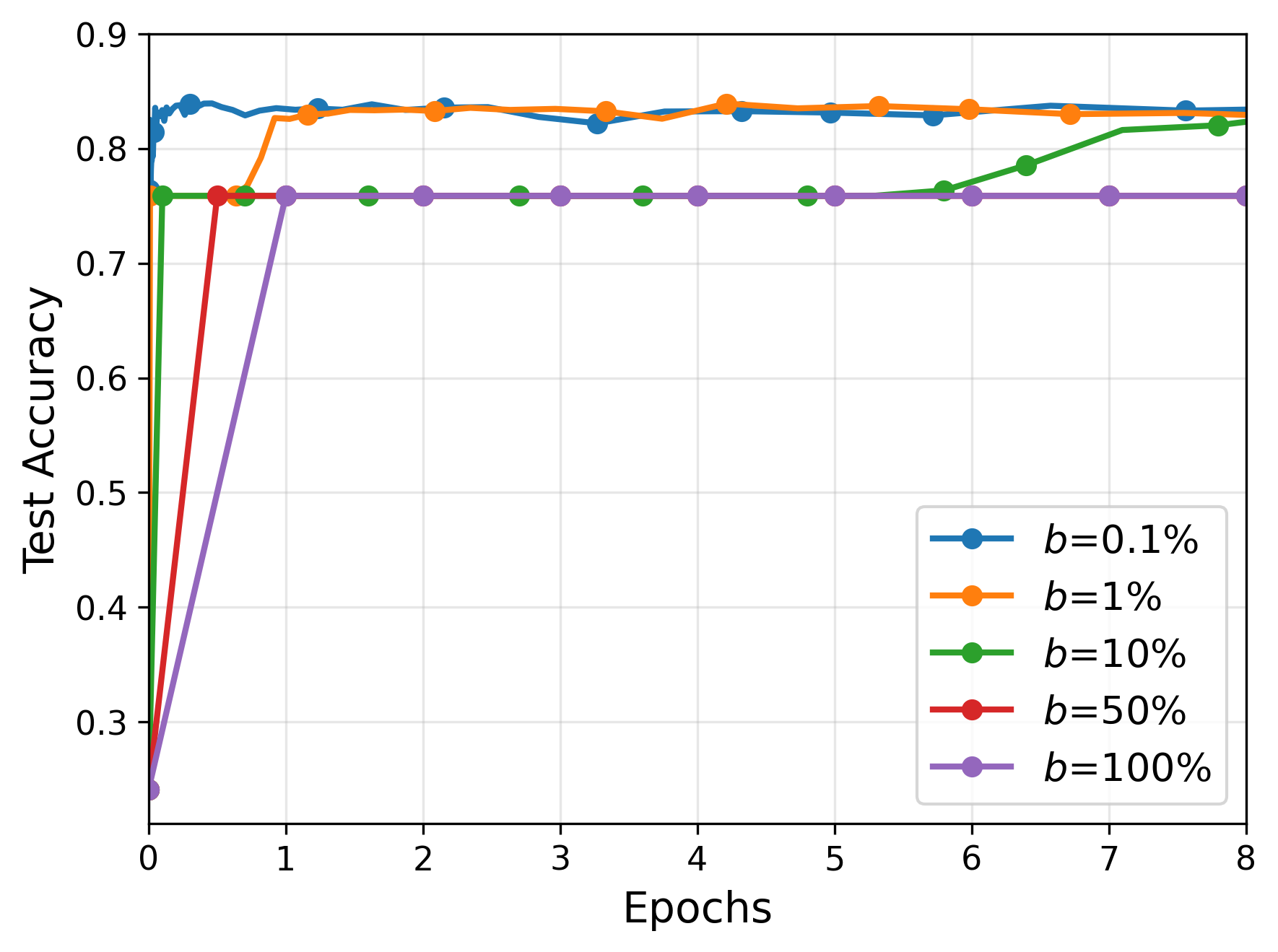}
        \caption{Test Accuracy}
        \label{fig:svm_batch_acc}
    \end{subfigure}
    \caption{Effect of batch size on Algorithm~\ref{Alg:adaprox} for solving the nonconvex SVM problem~\eqref{prob:svm} on the a9a dataset. Here, $b=1\%$ means the gradient batch size is $1\%$ of the training set size.}
    \label{fig:svm_batch}
\end{figure}

\subsection{Regularized Logistic Regression}
We now evaluate the performance of Algorithms~\ref{Alg:adaprox} and~\ref{Alg:acc-adaprox} on a convex composite problem: binary logistic regression with $\ell_1$ regularization and box constraints,
\begin{equation}\label{prob:logistic}
\min_{x\in[-R,R]^d} F(x):=\frac{1}{n} \sum_{i=1}^n \log \left(1+\exp \left(-b_i x^\top a_i\right)\right)+\lambda\|x\|_1.
\end{equation}
Here, $(a_i, b_i)\in \rbb^d\times \{-1,1\}$ denotes the $i$-th sample with feature vector $a_i\in\rbb^d$ and label $b_i\in\{-1,1\}$. This problem fits into the composite formulation~\eqref{main-prob} by defining
\[
f:=\frac{1}{n} \sum_{i=1}^n \log \left(1+\exp \left(-b_i \langle\cdot, a_i\rangle\right)\right),\quad h := \lambda\|\cdot\|_1 + \iota_{[-R, R]^d}(\cdot).
\]
For the experiments on \eqref{prob:logistic}, we set $R=50$. Since the objective function $F$ is convex, we evaluate optimality using the suboptimality gap $F(x)-F^*$, where the optimal objective value $F^*$ is approximated using the Python package CVXPY.

\textbf{Comparison between Algorithms \ref{Alg:adaprox} and \ref{Alg:acc-adaprox}.} We first set the $\ell_1$ regularization parameter to $\lambda=10^{-3}$ and compare the convergence behavior of Algorithms~\ref{Alg:adaprox} and~\ref{Alg:acc-adaprox} on the w6a dataset using a fixed batch size of 512. To assess robustness with respect to hyperparameter choices, we fix $\gamma=1$ and consider four values of $\eta \in \{0.1, 1, 10, 100\}$ for both algorithms.
Recall that Theorem~\ref{thm:stoch-convex-smooth} establishes convergence at the averaged iterate for Algorithm~\ref{Alg:adaprox}, whereas Theorem~\ref{thm:smooth-acc} guarantees last-iterate convergence for Algorithm~\ref{Alg:acc-adaprox}. This motivates evaluating performance at both the last and averaged iterates.

The results are presented in Figure~\ref{fig:log_eta}. We observe that Algorithm~\ref{Alg:adaprox} achieves stable convergence in terms of the averaged iterate across all choices of $\eta$, but its last-iterate performance degrades as $\eta$ increases. In contrast, Algorithm~\ref{Alg:acc-adaprox} exhibits robust convergence at both the last and averaged iterates, with similar convergence behavior across different values of $\eta$. These observations are consistent with Theorems~\ref{thm:stoch-convex-smooth} and~\ref{thm:smooth-acc}.

\begin{figure}[t]
    \centering
    \begin{subfigure}[b]{0.48\textwidth}
        \includegraphics[width=\textwidth]{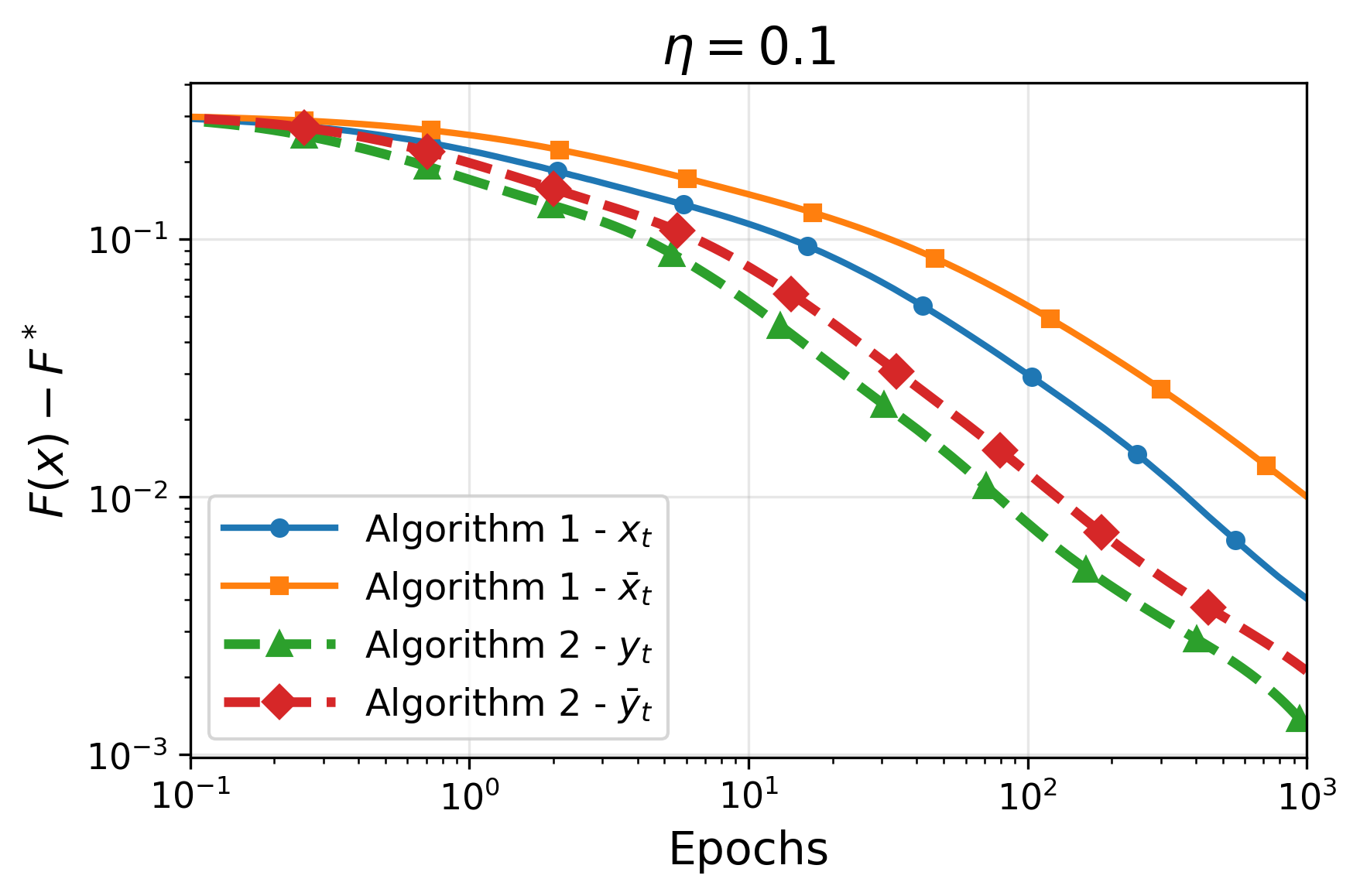}
    \end{subfigure}
    \begin{subfigure}[b]{0.48\textwidth}
        \includegraphics[width=\textwidth]{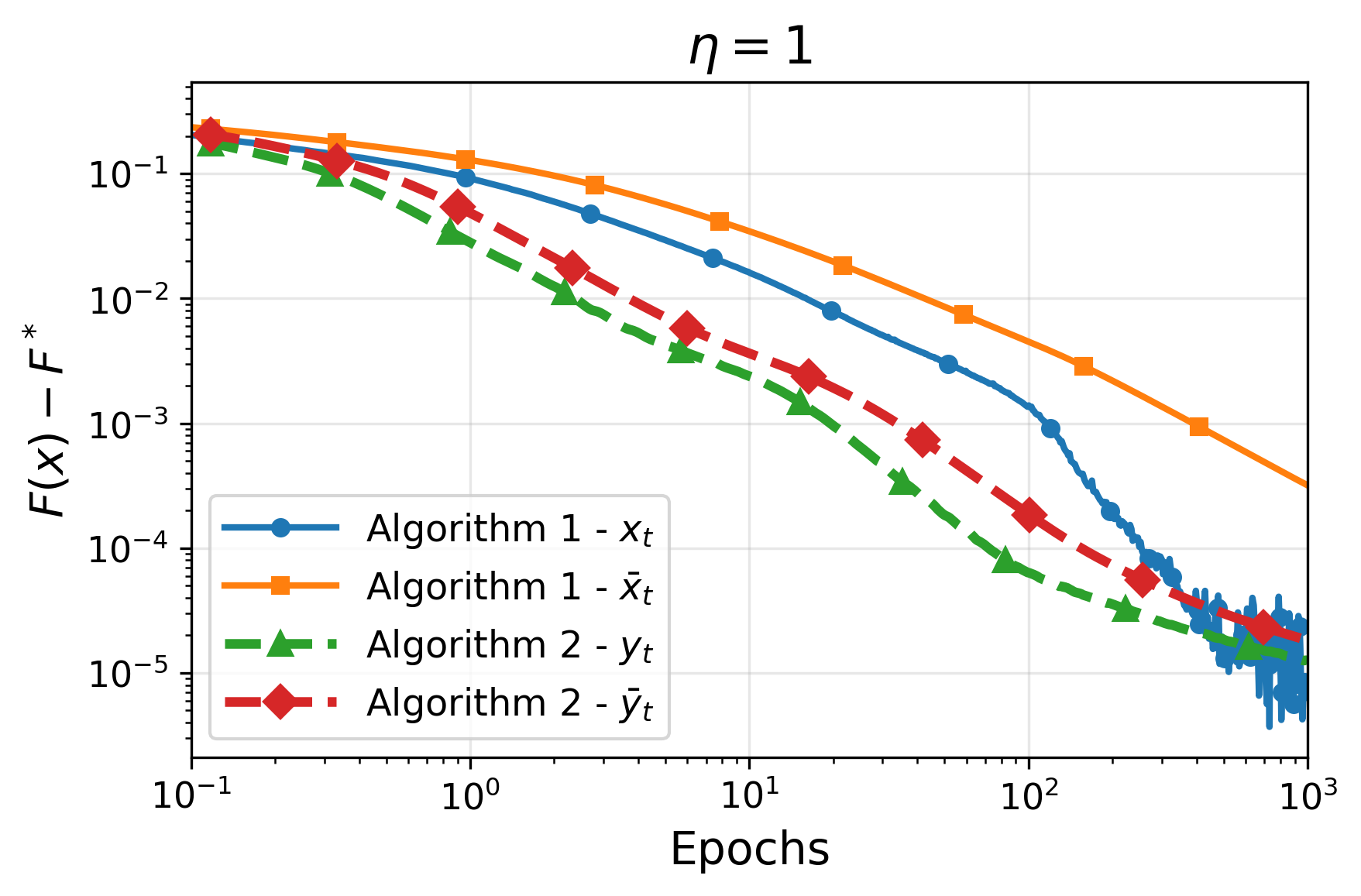}
    \end{subfigure}
    \begin{subfigure}[b]{0.48\textwidth}
        \includegraphics[width=\textwidth]{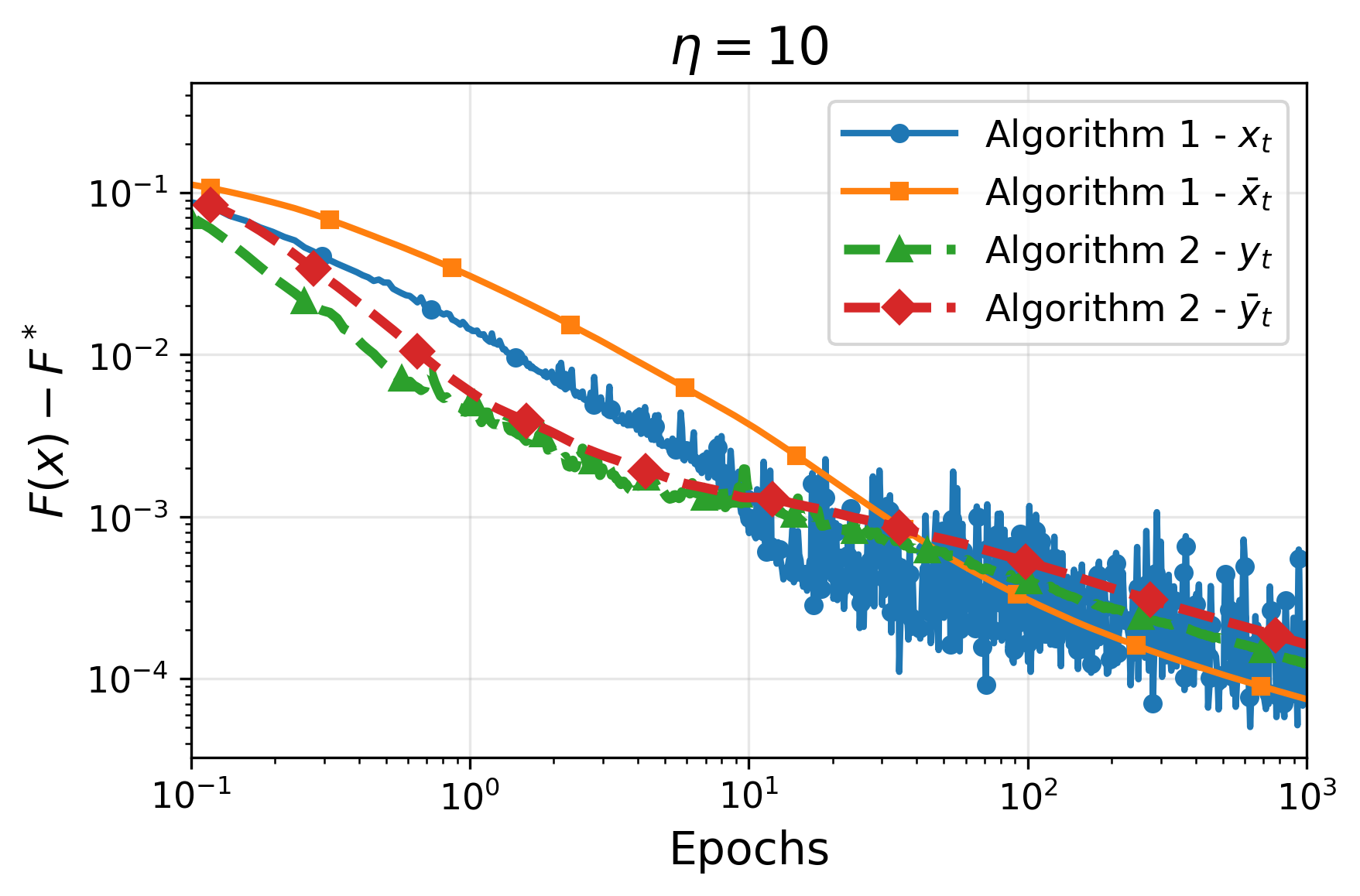}
    \end{subfigure}
    \begin{subfigure}[b]{0.48\textwidth}
        \includegraphics[width=\textwidth]{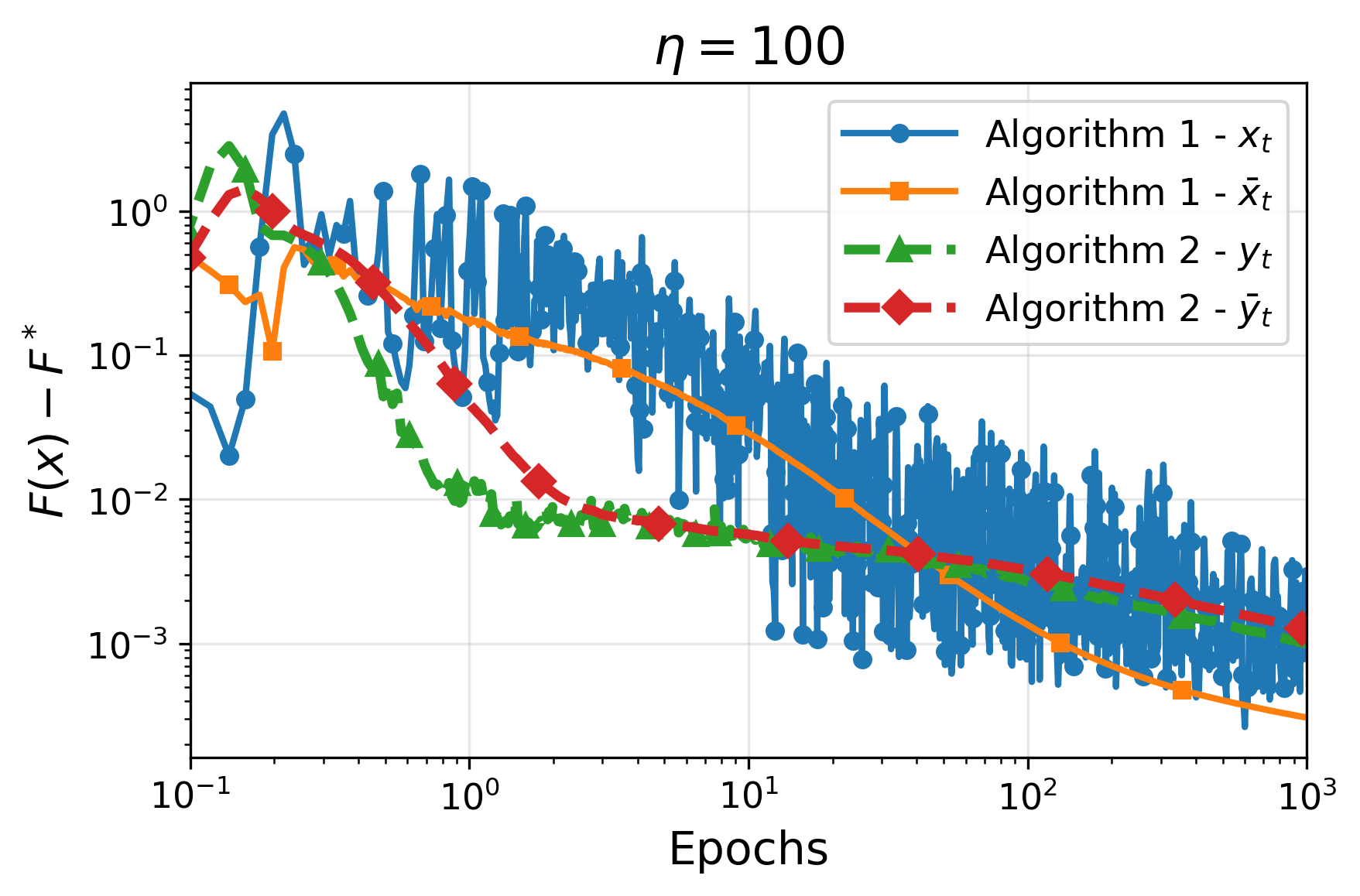}
    \end{subfigure}
    \caption{Comparison between Algorithms \ref{Alg:adaprox} and \ref{Alg:acc-adaprox} for the regularized logistic regression problem \eqref{prob:logistic} on the w6a dataset under different values of $\eta$. The averaged iterates are defined as $\bar{x}_t:= \frac{1}{t}\sum_{k=1}^t x_k$ for Algorithm \ref{Alg:adaprox} and $\bar{y}_t:= \frac{1}{\sum_{k=1}^t \alpha_k}\sum_{k=1}^t \alpha_k y_k$ for Algorithm \ref{Alg:acc-adaprox}.}
    \label{fig:log_eta}
\end{figure}

\textbf{Comparison in Non-Composite Settings.} We further compare Algorithms~\ref{Alg:adaprox} and~\ref{Alg:acc-adaprox} with existing adaptive accelerated methods UnixGrad~\citep{kavis2019unixgrad} and AcceleGrad~\citep{levy2018online}. Since the latter two methods are designed for non-composite problems, we focus on the special case $\lambda=0$. 
To conduct the experiments, we use the connect4 dataset and split it into training ($80\%$) and testing ($20\%$) sets. For all methods, we set the batch size to $b=512$ and run the algorithms for 1000 epochs. We fix $\eta=10$ for Algorithms \ref{Alg:adaprox} and \ref{Alg:acc-adaprox}, and use the same initial step sizes for UnixGrad and AcceleGrad to ensure a fair comparison. All other settings are kept consistent with the previous experiment.

Figure~\ref{fig:log_comparison} reports the convergence at both last and averaged iterates, as well as test accuracies for the four methods. We observe that Algorithm~\ref{Alg:acc-adaprox} and AcceleGrad achieve comparable and strong performance in both convergence and test accuracy. In contrast, Algorithm~\ref{Alg:adaprox} converges more slowly and yields lower test accuracy in the early stages. Moreover, UnixGrad exhibits fast convergence in terms of averaged iterates and improves test accuracy quickly, but its last-iterate behavior is highly unstable.

\begin{figure}[t]
    \centering
    \begin{subfigure}[b]{0.33\textwidth}
        \includegraphics[width=\textwidth]{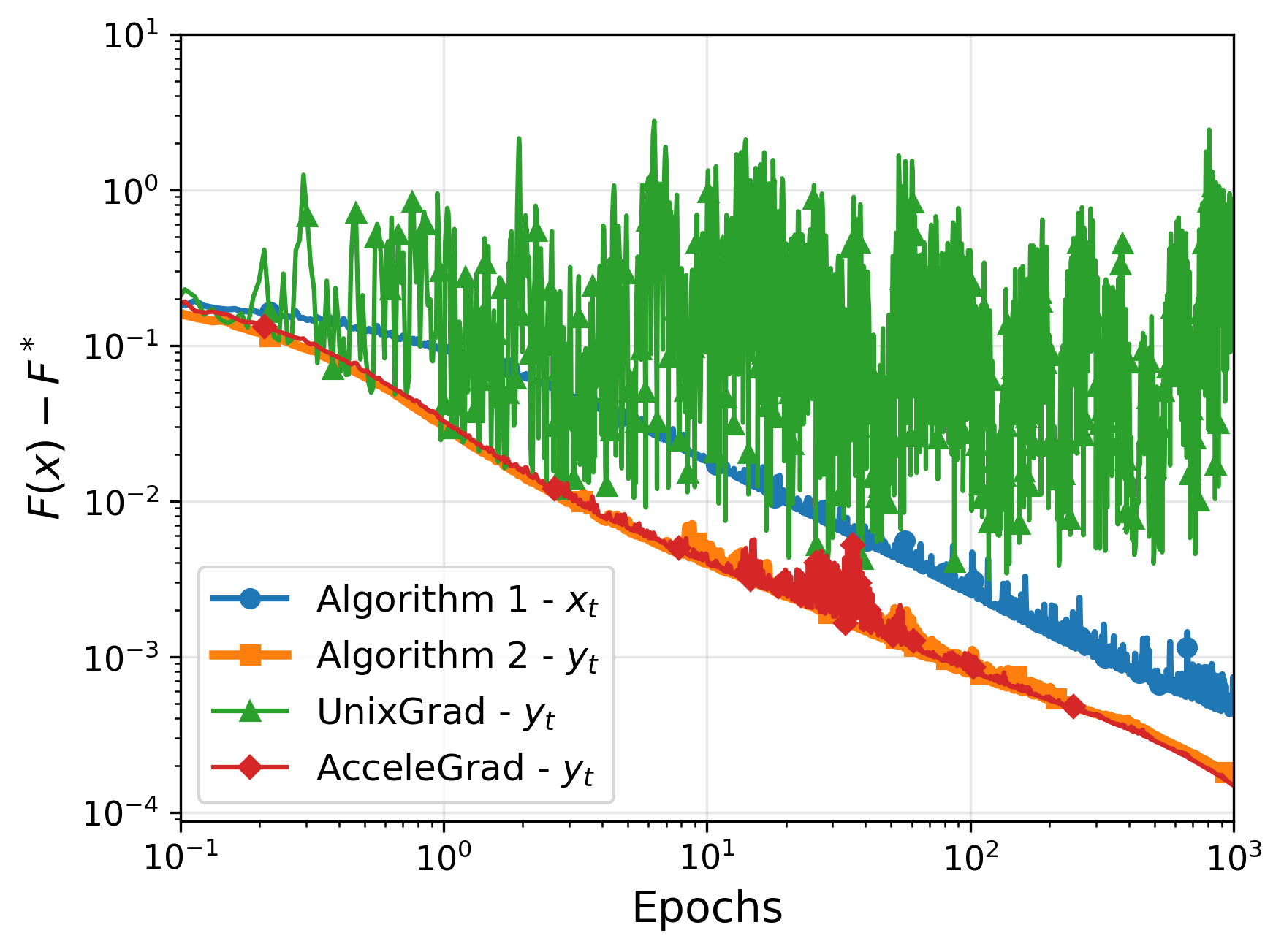}
        \caption{Last iterate}
    \end{subfigure}
    \begin{subfigure}[b]{0.325\textwidth}
        \includegraphics[width=\textwidth]{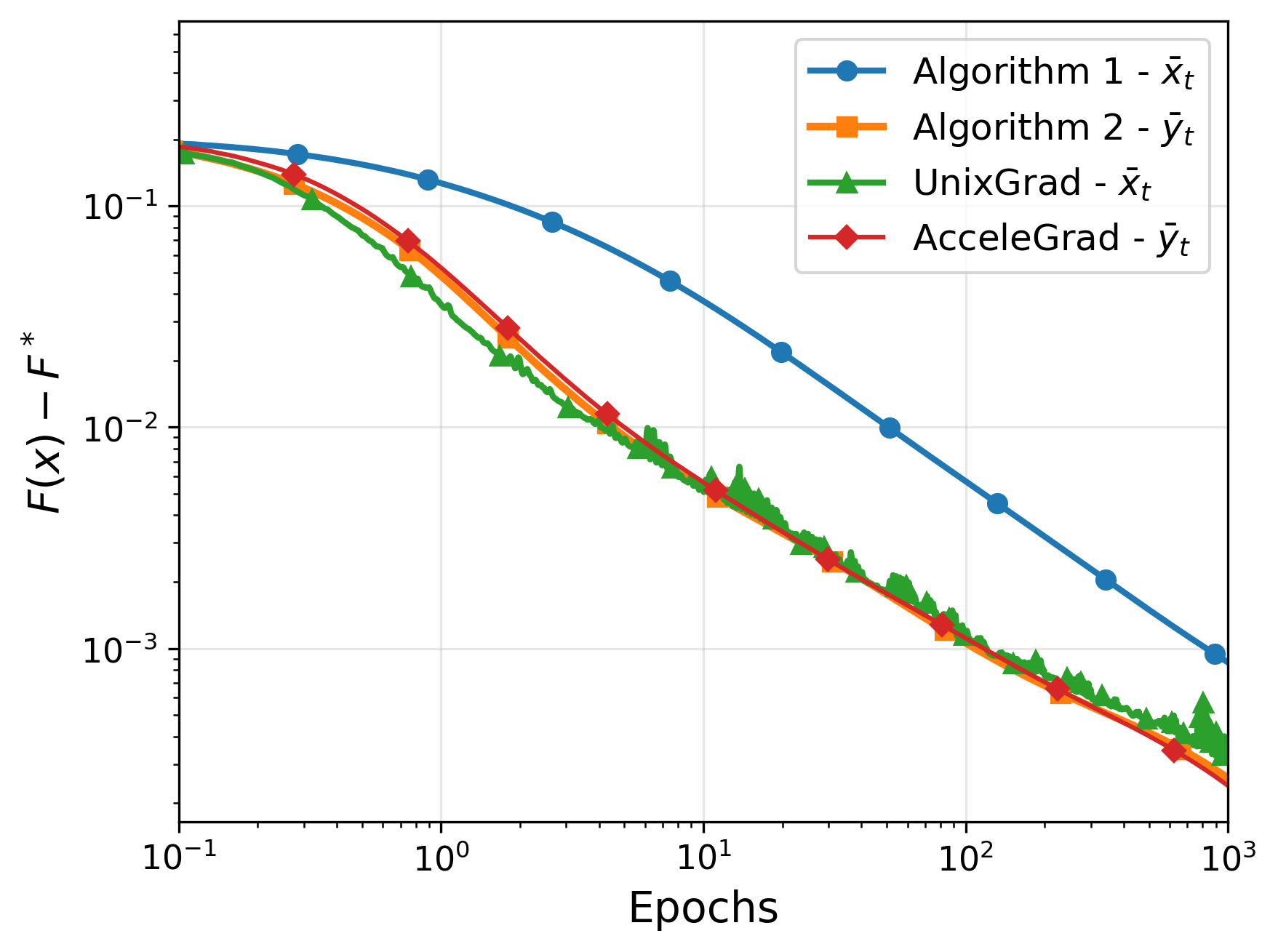}
        \caption{Average iterate}
    \end{subfigure}
    \begin{subfigure}[b]{0.33\textwidth}
        \includegraphics[width=\textwidth]{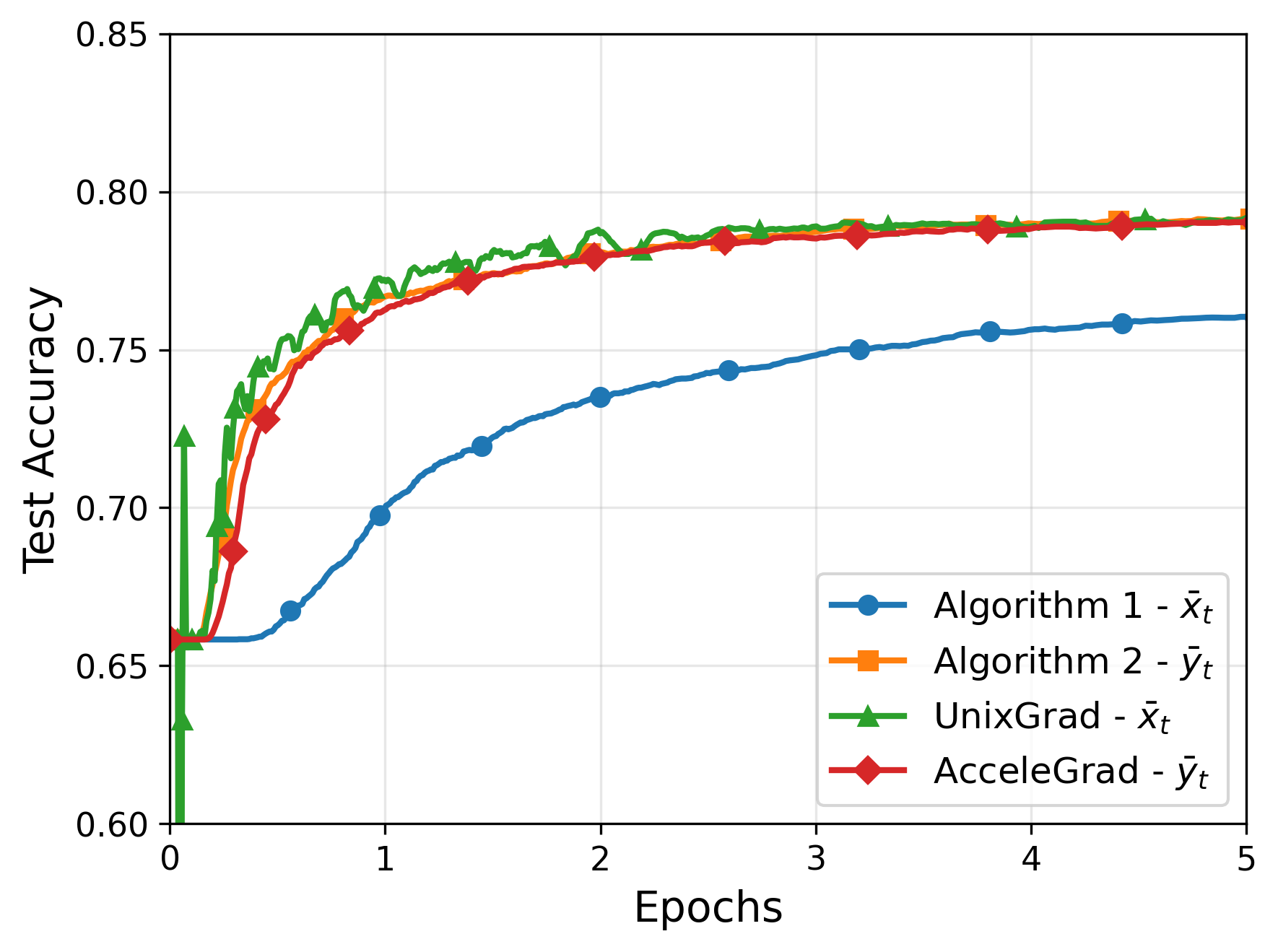}
        \caption{Test accuracy}
    \end{subfigure}
    \caption{Comparison among Algorithms \ref{Alg:adaprox} and \ref{Alg:acc-adaprox}, UnixGrad, and AcceleGrad for solving the logistic regression problem \eqref{prob:logistic} with $\lambda=0$ on the connect4 dataset.}
    \label{fig:log_comparison}
\end{figure}

\section{Conclusion}
\label{sec:conclusion}
We proposed an adaptive proximal gradient method for solving nonsmooth composite optimization problems, where one component is a simple convex function and the other belongs to one of three classes: nonconvex smooth, convex nonsmooth, or convex smooth. The method is characterized by an adaptive step size that accumulates historical gradient mapping norms in the denominator, replacing the gradient norms used in standard AdaGrad to better accommodate the composite structure. We showed that the method converges across all three problem classes without any modification or knowledge of problem-related parameters, achieving rates that match those of proximal gradient methods up to logarithmic factors in both deterministic and stochastic settings. For the convex case, we further developed an accelerated variant that retains near-optimal rates for convex nonsmooth objectives and achieves an improved rate of $\widetilde{O}(1/t^2 + \sigma/\sqrt{t})$ for convex smooth objectives. To handle the randomness of the adaptive step sizes in the stochastic setting, we developed new technical tools to bound cumulative inner products between the stochastic noise and iterative differences, which are applicable across all three problem classes and enable simplified convergence analyses. Our experiments on both nonconvex and convex composite problems validate the theoretical findings and demonstrate the practical competitiveness of the proposed algorithms.

Several directions remain open. The logarithmic factors in our convergence rates arise from bounding the growth of the denominator in the adaptive step sizes, and it is unclear whether they can be removed by a sharper analysis or a modified algorithm. 
Another challenging open direction is to develop an adaptive accelerated method that simultaneously achieves optimal convergence rates across all three problem classes without modifications or knowledge of problem-related parameters, thereby unifying the universality of the first algorithm we proposed with the accelerated rates of the second.

\section*{Acknowledgments}
This work was supported by the Wallenberg AI, Autonomous Systems and Software Program (WASP) funded by the Knut and Alice Wallenberg Foundation. 

\setlength{\bibsep}{0.111cm}
\bibliographystyle{abbrvnat}
\bibliography{reference}

\begin{thebibliography}{43}
\providecommand{\natexlab}[1]{#1}
\providecommand{\url}[1]{\texttt{#1}}
\expandafter\ifx\csname urlstyle\endcsname\relax
  \providecommand{\doi}[1]{doi: #1}\else
  \providecommand{\doi}{doi: \begingroup \urlstyle{rm}\Url}\fi

\bibitem[Attia and Koren(2023)]{attia2023sgd}
A.~Attia and T.~Koren.
\newblock {SGD with AdaGrad stepsizes: Full adaptivity with high probability to unknown parameters, unbounded gradients and affine variance}.
\newblock In \emph{International Conference on Machine Learning}, pages 1147--1171. PMLR, 2023.

\bibitem[Beck and Teboulle(2009)]{beck2009fast}
A.~Beck and M.~Teboulle.
\newblock A fast iterative shrinkage-thresholding algorithm for linear inverse problems.
\newblock \emph{SIAM Journal on Imaging Sciences}, 2\penalty0 (1):\penalty0 183--202, 2009.

\bibitem[Berkson(1944)]{berkson1944application}
J.~Berkson.
\newblock Application of the logistic function to bio-assay.
\newblock \emph{Journal of the American Statistical Association}, 39\penalty0 (227):\penalty0 357--365, 1944.

\bibitem[Candes and Recht(2012)]{candes2012exact}
E.~Candes and B.~Recht.
\newblock Exact matrix completion via convex optimization.
\newblock \emph{Communications of the ACM}, 55\penalty0 (6):\penalty0 111--119, 2012.

\bibitem[Chang and Lin(2011)]{chang2011libsvm}
C.~Chang and C.~Lin.
\newblock {LIBSVM}: A library for support vector machines.
\newblock \emph{ACM Transactions on Intelligent Systems and Technology}, 2\penalty0 (3):\penalty0 27, 2011.

\bibitem[Cortes and Vapnik(1995)]{cortes1995support}
C.~Cortes and V.~Vapnik.
\newblock Support-vector networks.
\newblock \emph{Machine Learning}, 20\penalty0 (3):\penalty0 273--297, 1995.

\bibitem[Duchi et~al.(2011)Duchi, Hazan, and Singer]{duchi2011adaptive}
J.~Duchi, E.~Hazan, and Y.~Singer.
\newblock Adaptive subgradient methods for online learning and stochastic optimization.
\newblock \emph{Journal of Machine Learning Research}, 12:\penalty0 2121--2159, 2011.

\bibitem[Ene et~al.(2021)Ene, Nguyen, and Vladu]{ene2021adaptive}
A.~Ene, H.~Nguyen, and A.~Vladu.
\newblock Adaptive gradient methods for constrained convex optimization and variational inequalities.
\newblock In \emph{Proceedings of the AAAI Conference on Artificial Intelligence}, volume~35, pages 7314--7321, 2021.

\bibitem[Faw et~al.(2022)Faw, Tziotis, Caramanis, Mokhtari, Shakkottai, and Ward]{faw2022power}
M.~Faw, I.~Tziotis, C.~Caramanis, A.~Mokhtari, S.~Shakkottai, and R.~Ward.
\newblock The power of adaptivity in {SGD}: Self-tuning step sizes with unbounded gradients and affine variance.
\newblock In \emph{Conference on Learning Theory}, pages 313--355. PMLR, 2022.

\bibitem[Ghadimi and Lan(2013)]{ghadimi2013stochastic}
S.~Ghadimi and G.~Lan.
\newblock Stochastic first-and zeroth-order methods for nonconvex stochastic programming.
\newblock \emph{SIAM Journal on Optimization}, 23\penalty0 (4):\penalty0 2341--2368, 2013.

\bibitem[Ghadimi and Lan(2016)]{ghadimi2016accelerated}
S.~Ghadimi and G.~Lan.
\newblock Accelerated gradient methods for nonconvex nonlinear and stochastic programming.
\newblock \emph{Mathematical Programming}, 156\penalty0 (1):\penalty0 59--99, 2016.

\bibitem[Ghadimi et~al.(2016)Ghadimi, Lan, and Zhang]{ghadimi2016mini}
S.~Ghadimi, G.~Lan, and H.~Zhang.
\newblock Mini-batch stochastic approximation methods for nonconvex stochastic composite optimization.
\newblock \emph{Mathematical Programming}, 155\penalty0 (1-2):\penalty0 267--305, 2016.

\bibitem[Joulani et~al.(2020)Joulani, Raj, Gyorgy, and Szepesv{\'a}ri]{joulani2020simpler}
P.~Joulani, A.~Raj, A.~Gyorgy, and C.~Szepesv{\'a}ri.
\newblock A simpler approach to accelerated optimization: iterative averaging meets optimism.
\newblock In \emph{International Conference on Machine Learning}, pages 4984--4993. PMLR, 2020.

\bibitem[Kavis et~al.(2019)Kavis, Levy, Bach, and Cevher]{kavis2019unixgrad}
A.~Kavis, K.~Levy, F.~Bach, and V.~Cevher.
\newblock {UniXGrad}: A universal, adaptive algorithm with optimal guarantees for constrained optimization.
\newblock \emph{Advances in Neural Information Processing Systems}, 32, 2019.

\bibitem[Kavis et~al.(2022)Kavis, Levy, and Cevher]{kavis2022high}
A.~Kavis, K.~Levy, and V.~Cevher.
\newblock High probability bounds for a class of nonconvex algorithms with {AdaGrad} stepsize.
\newblock In \emph{International Conference on Learning Representations}, 2022.

\bibitem[Kingma and Ba(2015)]{kingma2015adam}
D.~Kingma and J.~Ba.
\newblock {Adam}: A method for stochastic optimization.
\newblock In \emph{International Conference on Learning Representations}, 2015.

\bibitem[Lan(2012)]{lan2012optimal}
G.~Lan.
\newblock An optimal method for stochastic composite optimization.
\newblock \emph{Mathematical Programming}, 133\penalty0 (1-2):\penalty0 365--397, 2012.

\bibitem[Latafat et~al.(2024)Latafat, Themelis, and Patrinos]{latafat2024convergence}
P.~Latafat, A.~Themelis, and P.~Patrinos.
\newblock On the convergence of adaptive first order methods: proximal gradient and alternating minimization algorithms.
\newblock In \emph{6th Annual Learning for Dynamics \& Control Conference}, pages 197--208. PMLR, 2024.

\bibitem[Latafat et~al.(2025)Latafat, Themelis, Stella, and Patrinos]{latafat2025adaptive}
P.~Latafat, A.~Themelis, L.~Stella, and P.~Patrinos.
\newblock Adaptive proximal algorithms for convex optimization under local lipschitz continuity of the gradient.
\newblock \emph{Mathematical Programming}, 213\penalty0 (1):\penalty0 433--471, 2025.

\bibitem[LeCun et~al.(2002)LeCun, Bottou, Bengio, and Haffner]{lecun2002gradient}
Y.~LeCun, L.~Bottou, Y.~Bengio, and P.~Haffner.
\newblock Gradient-based learning applied to document recognition.
\newblock \emph{Proceedings of the IEEE}, 86\penalty0 (11):\penalty0 2278--2324, 2002.

\bibitem[Levitin and Polyak(1966)]{levitin1966constrained}
E.~Levitin and B.~Polyak.
\newblock Constrained minimization methods.
\newblock \emph{USSR Computational Mathematics and Mathematical Physics}, 6\penalty0 (5):\penalty0 1--50, 1966.

\bibitem[Levy(2017)]{levy2017online}
K.~Levy.
\newblock Online to offline conversions, universality and adaptive minibatch sizes.
\newblock \emph{Advances in Neural Information Processing Systems}, 30, 2017.

\bibitem[Levy et~al.(2018)Levy, Yurtsever, and Cevher]{levy2018online}
K.~Levy, A.~Yurtsever, and V.~Cevher.
\newblock Online adaptive methods, universality and acceleration.
\newblock \emph{Advances in Neural Information Processing Systems}, 31, 2018.

\bibitem[Lions and Mercier(1979)]{lions1979splitting}
P.~Lions and B.~Mercier.
\newblock Splitting algorithms for the sum of two nonlinear operators.
\newblock \emph{SIAM Journal on Numerical Analysis}, 16\penalty0 (6):\penalty0 964--979, 1979.

\bibitem[Malitsky and Mishchenko(2020)]{malitsky2020adaptive}
Y.~Malitsky and K.~Mishchenko.
\newblock Adaptive gradient descent without descent.
\newblock In \emph{International Conference on Machine Learning}, pages 6702--6712. PMLR, 2020.

\bibitem[Malitsky and Mishchenko(2024)]{malitsky2024adaptive}
Y.~Malitsky and K.~Mishchenko.
\newblock Adaptive proximal gradient method for convex optimization.
\newblock \emph{Advances in Neural Information Processing Systems}, 37:\penalty0 100670--100697, 2024.

\bibitem[Nesterov(1983)]{nesterov1983method}
Y.~Nesterov.
\newblock {A method for solving the convex programming problem with convergence rate $O(1/k^2)$}.
\newblock In \emph{Dokl. Akad. Nauk SSSR}, volume 269, pages 543--547, 1983.

\bibitem[Nesterov(2003)]{nesterov2003introductory}
Y.~Nesterov.
\newblock \emph{Introductory Lectures on Convex Optimization: A Basic Course}, volume~87.
\newblock Springer Science \& Business Media, 2003.

\bibitem[Nesterov(2013)]{nesterov2013gradient}
Y.~Nesterov.
\newblock Gradient methods for minimizing composite functions.
\newblock \emph{Mathematical Programming}, 140\penalty0 (1):\penalty0 125--161, 2013.

\bibitem[Nesterov(2015)]{nesterov2015universal}
Y.~Nesterov.
\newblock Universal gradient methods for convex optimization problems.
\newblock \emph{Mathematical Programming}, 152\penalty0 (1-2):\penalty0 381--404, 2015.

\bibitem[Parikh and Boyd(2014)]{parikh2014proximal}
N.~Parikh and S.~Boyd.
\newblock Proximal algorithms.
\newblock \emph{Foundations and Trends in Optimization}, 1\penalty0 (3):\penalty0 127--239, 2014.

\bibitem[Passty(1979)]{passty1979ergodic}
G.~Passty.
\newblock Ergodic convergence to a zero of the sum of monotone operators in {Hilbert} space.
\newblock \emph{Journal of Mathematical Analysis and Applications}, 72\penalty0 (2):\penalty0 383--390, 1979.

\bibitem[Reddi et~al.(2018)Reddi, Kale, and Kumar]{reddi2018convergence}
S.~Reddi, S.~Kale, and S.~Kumar.
\newblock On the convergence of {Adam} and beyond.
\newblock In \emph{International Conference on Learning Representations}, 2018.

\bibitem[Rodomanov et~al.(2024)Rodomanov, Jiang, and Stich]{rodomanov2024universality}
A.~Rodomanov, X.~Jiang, and S.~Stich.
\newblock Universality of {AdaGrad} stepsizes for stochastic optimization: Inexact oracle, acceleration and variance reduction.
\newblock \emph{Advances in Neural Information Processing Systems}, 37:\penalty0 26770--26813, 2024.

\bibitem[Tibshirani(1996)]{tibshirani1996regression}
R.~Tibshirani.
\newblock Regression shrinkage and selection via the lasso.
\newblock \emph{Journal of the Royal Statistical Society. Series B (Methodological)}, pages 267--288, 1996.

\bibitem[Tieleman and Hinton(2012)]{tieleman2012rmsprop}
T.~Tieleman and G.~Hinton.
\newblock {Lecture 6.5---RmsProp}: Divide the gradient by a running average of its recent magnitude.
\newblock COURSERA: Neural Networks for Machine Learning, 2012.

\bibitem[Tseng(2000)]{tseng2000modified}
P.~Tseng.
\newblock A modified forward-backward splitting method for maximal monotone mappings.
\newblock \emph{SIAM Journal on Control and Optimization}, 38\penalty0 (2):\penalty0 431--446, 2000.

\bibitem[Wang et~al.(2023{\natexlab{a}})Wang, Zhang, Ma, and Chen]{wang2023convergence}
B.~Wang, H.~Zhang, Z.~Ma, and W.~Chen.
\newblock Convergence of {AdaGrad} for non-convex objectives: Simple proofs and relaxed assumptions.
\newblock In \emph{The Thirty Sixth Annual Conference on Learning Theory}, pages 161--190. PMLR, 2023{\natexlab{a}}.

\bibitem[Wang et~al.(2023{\natexlab{b}})Wang, Wang, and Zhang]{wang2023stochastic}
J.~Wang, X.~Wang, and L.~Zhang.
\newblock Stochastic regularized {Newton} methods for nonlinear equations.
\newblock \emph{Journal of Scientific Computing}, 94\penalty0 (3):\penalty0 51, 2023{\natexlab{b}}.

\bibitem[Wang et~al.(2017)Wang, Ma, Goldfarb, and Liu]{wang2017stochastic}
X.~Wang, S.~Ma, D.~Goldfarb, and W.~Liu.
\newblock Stochastic quasi-{Newton} methods for nonconvex stochastic optimization.
\newblock \emph{SIAM Journal on Optimization}, 27\penalty0 (2):\penalty0 927--956, 2017.

\bibitem[Ward et~al.(2020)Ward, Wu, and Bottou]{ward2020adagrad}
R.~Ward, X.~Wu, and L.~Bottou.
\newblock {AdaGrad} stepsizes: Sharp convergence over nonconvex landscapes.
\newblock \emph{Journal of Machine Learning Research}, 21:\penalty0 1--30, 2020.

\bibitem[Yun et~al.(2021)Yun, Lozano, and Yang]{yun2021adaptive}
J.~Yun, A.~Lozano, and E.~Yang.
\newblock Adaptive proximal gradient methods for structured neural networks.
\newblock \emph{Advances in Neural Information Processing Systems}, 34:\penalty0 24365--24378, 2021.

\bibitem[Zeiler(2012)]{zeiler2012adadelta}
M.~Zeiler.
\newblock {ADADELTA}: An adaptive learning rate method.
\newblock \emph{arXiv preprint arXiv:1212.5701}, 2012.

\end{thebibliography}
\end{document}